\documentclass[12pt,reqno]{amsart}
\usepackage{fullpage}
\allowdisplaybreaks
\usepackage{times}
\usepackage{colonequals}
\usepackage{amsmath,amssymb,amsthm,url}
\usepackage[utf8]{inputenc}
\usepackage[english]{babel}
\usepackage[alphabetic]{amsrefs}
\usepackage{bbm}
\usepackage{enumerate}
\usepackage{bm}
\usepackage{graphicx}
\usepackage{mathrsfs}
\usepackage[colorlinks=true, pdfstartview=FitH, linkcolor=blue, citecolor=blue, urlcolor=blue]{hyperref}

\newtheorem{thm}{Theorem}[section]
\newtheorem{lem}[thm]{Lemma}
\newtheorem{prop}[thm]{Proposition}
\newtheorem{cor}[thm]{Corollary}

\newtheorem{claim}[thm]{Claim}

\theoremstyle{definition}
\newtheorem{defn}[thm]{Definition}
\newtheorem{question}[thm]{Question}
\newtheorem{rem}[thm]{Remark}
\newtheorem{rems}[thm]{Remarks}
\newtheorem{ex}[thm]{Example}

\newcommand{\bP}{\mathbb{P}}

\newcommand{\C}{\mathbb{C}} 

\newcommand{\Z}{\mathbb{Z}}
\newcommand{\F}{\mathbb{F}}

\AtBeginDocument{%
	\def\MR#1{}
}

\begin{document}

\title{Plane curves giving rise to blocking sets over finite fields}

\author{Shamil Asgarli}
\address{Department of Mathematics and Computer Science \\ Santa Clara University \\ 500 El Camino Real \\ USA 95053}
\email{sasgarli@scu.edu}

\author{Dragos Ghioca}
\address{Department of Mathematics \\ University of British Columbia \\ 1984 Mathematics Road \\ Canada V6T 1Z2}
\email{dghioca@math.ubc.ca}

\author{Chi Hoi Yip}
\address{Department of Mathematics \\ University of British Columbia \\ 1984 Mathematics Road \\ Canada V6T 1Z2}
\email{kyleyip@math.ubc.ca}
\subjclass[2020]{Primary 51E21, 14H50; Secondary 51E15, 11T30, 11G20}
\keywords{Plane curves, blocking sets, finite field, Frobenius nonclassical, projective triangle}

\maketitle

\begin{abstract}
In recent years, many useful applications of the polynomial method have emerged in finite geometry. Indeed, algebraic curves, especially those defined by R\'edei-type polynomials, are powerful in studying blocking sets. In this paper, we reverse the engine and study when blocking sets can arise from rational points on plane curves over finite fields. We show that irreducible curves of low degree cannot provide blocking sets and prove more refined results for cubic and quartic curves. On the other hand, using tools from number theory, we construct smooth plane curves defined over $\mathbb{F}_p$ of degree at most $4p^{3/4}+1$ whose points form blocking sets.
\end{abstract}

\section{Introduction}\label{sect:intro}

Throughout the paper, $p$ denotes a prime, $q$ denotes a power of $p$, and $\F_q$ denotes the finite field with $q$ elements. 
A set of points $B\subseteq \bP^2(\F_q)$ is called a \emph{blocking set} if every $\F_q$-line $L$ intersects $B$. It is clear that taking $q+1$ points on a given $\mathbb{F}_q$-line forms a blocking set, since any two lines meet in the projective plane; this is known as a \emph{trivial} blocking set. A blocking set $B$ is said to be \emph{nontrivial} if it does not contain all the $\F_q$-points of any $\F_q$-line. 

It is known that the size of a nontrivial blocking set must satisfy $|B|\geq q+\sqrt{q}+1$ \cite{B70}. When $q=p$ is a prime, Blokhuis \cite{B94} proved a much stronger lower bound $|B|\geq \frac{3}{2}(p+1)$. The main tool used in Blokhuis' proof and in subsequent developments in this area have been R\'edei-type polynomials which are highly reducible algebraic curves. See the excellent surveys \cites{S97b, SS98} for more details and other applications of algebraic curves in finite geometry.

Let $C=\{F=0\}\subset \bP^2$ be an irreducible plane curve of degree $d\geq 2$ defined over a finite field $\F_q$. Let $C(\F_q)$ denote the set of $\F_q$-points on $C$, that is, $C(\F_q)$ consists of all $[x:y:z]\in \mathbb{P}^2(\F_q)$ such that $F(x,y,z)=0$. We assume that our curve is geometrically irreducible so that the curve is ``minimal" in the sense that it has no smaller proper component. Since smooth curves are important from the algebraic geometric point of view, we will sometimes further assume that $C$ is smooth. We are interested in the following problem.

\begin{question}\label{quest:version1}
When does there exist a line $L\subset \bP^2$ defined over $\F_q$ such that $C\cap L$ has no $\F_q$-points? 
\end{question}

One motivation for investigating Question~\ref{quest:version1} comes from algebraic geometry. Suppose $C$ is a curve that parametrizes other algebraic varieties; for instance, $C$ could be a curve in the parameter space of all degree $d$ hypersurfaces in $\mathbb{P}^n$. This means that a point on $C$ corresponds to a certain hypersurface of degree $d$. The points in $C(\F_q)$ would then correspond to those hypersurfaces of degree $d$ defined over $\mathbb{F}_q$. If we can find a line $L$ such that $C\cap L$ has no $\F_q$-points, then we have constructed a certain \emph{pencil} whose $\F_q$-members avoid $C$. In particular, one can use this idea to construct a pencil of hypersurfaces whose $\F_q$-members are smooth \cite{AG23}. 

Using the language of blocking sets, Question~\ref{quest:version1} is equivalent to determining when $C(\F_q)$ is \textbf{not} a blocking set. We remark that questions of a similar flavor have been studied in the past. In particular, Hirschfeld and Voloch \cite{HV88} asked when an arc is contained in an irreducible plane curve, and when an irreducible plane curve gives rise to a complete arc. One special motivation for constructing such curves lies in its application to coding theory \cites{B09}. Recall that a \emph{$(k,n)$-arc} in $\bP^2(\F_q)$ is a set of $k$ points where the maximum number of collinear points is $n$. There is an obvious relation between arcs and blocking sets: a complement of a $(k, n)$-arc in the plane is a (multiple) blocking set where each line meets the point set in at least $q+1-n$ points. The case of smooth conics was first studied by Segre \cite{S67}. Moreover, cubic curves, which give rise to ($k,3)$-arcs, were studied in \cites{HV88, G02, BMP17}. Some special algebraic constructions were discussed in \cites{GPTU02, B09, BMT14, GKT19, BM22,KPS23+}. Similar questions have been studied in the setting of caps in \cites{S67, ABGP14, ABPG15}. 

For simplicity, let us call $C$ a \emph{blocking curve} if $C(\F_q)$ is a blocking set. Furthermore, $C$ is \emph{nontrivially blocking} if $C(\F_q)$ is a nontrivial blocking set.  Constructing such curves is necessarily subtle, because most plane curves are not blocking \cite{AGY22b} from an arithmetic statistics perspective. 

Answering Question~\ref{quest:version1} in full generality seems difficult. Instead, we consider the following: for a given $d$ and $q$, does there exist a nontrivially blocking curve $C$ with degree $d$ defined over $\F_q$? The question is more interesting when $C$ is further assumed to be geometrically irreducible (or even smooth). Note that we are working over $\F_q$; however, it makes more sense to talk about geometric properties of curves over the algebraic closure $\overline{\F_q}$. When $d$ is large compared to $q$, one can study the proportion of smooth blocking curves \cite{AGY22b}. Thus, it makes sense to fix $q$, and ask for the minimum degree $d$ of a smooth blocking curve over $\F_q$.

Motivated by the past work on arcs arising from plane curves, we begin our study with the curves of low degree with respect to the cardinality of the field. Our first main result shows that an irreducible curve of low degree cannot be blocking.

\begin{thm}\label{thm:low-degree}
Let $C\subset \bP^2$ be an irreducible curve of degree $d\geq 4$. If $q \geq (d-1)^2(d-2)^4$, then $C$ is not blocking.
\end{thm}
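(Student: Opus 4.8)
The plan is to show that a blocking curve of degree $d$ would have strictly more $\F_q$-points than the Weil bound permits. Write $N=|C(\F_q)|$, and for each $\F_q$-line $L$ put $k_L=|C(\F_q)\cap L|$; since $C$ is irreducible of degree $d\geq 2$, no line is a component of $C$, so $k_L\leq d$ for all $L$. Summing over all $q^2+q+1$ lines gives the identities $\sum_L k_L=N(q+1)$ (each $\F_q$-point of $C$ lies on $q+1$ lines) and $\sum_L\binom{k_L}{2}=\binom{N}{2}$ (each pair of distinct $\F_q$-points of $C$ spans a unique line). Assume for contradiction that $C$ is blocking, so $k_L\geq 1$ for every line. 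Using $k-1=\tfrac{2}{k}\binom{k}{2}\geq\tfrac{2}{d}\binom{k}{2}$ for $1\leq k\leq d$,
\[
N(q+1)-(q^2+q+1)=\sum_L(k_L-1)\geq\frac{2}{d}\sum_L\binom{k_L}{2}=\frac{N(N-1)}{d},
\]
i.e. $q^2+q+1\leq N(q+1)-\tfrac{N(N-1)}{d}$.

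On the other hand the Weil bound for geometrically irreducible plane curves, valid also in the singular case (via the normalization, or via the singular form of the bound, the arithmetic genus being $\binom{d-1}{2}$), gives $N\leq q+1+E$ with $E:=(d-1)(d-2)\sqrt q$. The quadratic $f(N):=N(q+1)-N(N-1)/d$ is concave with maximum at $N=\tfrac{d(q+1)+1}{2}$, and since $q\geq(d-1)^2(d-2)^4$ forces $\sqrt q\geq(d-1)(d-2)^2$, the point $q+1+E$ lies to the left of the vertex (as $2(q+1+E)<d(q+1)+1$ here), so $f$ is increasing on $[0,q+1+E]$ and $f(N)\leq f(q+1+E)$. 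Hence blocking would force
\[
q^2+q+1\leq(q+1+E)\cdot\frac{(d-1)q+d-E}{d},
\]
which after clearing denominators is equivalent to
\[
q^2-\bigl(d-1+(d-2)E\bigr)q+E\bigl(E-(d-1)\bigr)\leq 0.
\]

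To finish I would check that this last inequality fails whenever $q\geq(d-1)^2(d-2)^4$. Substituting $E=(d-1)(d-2)\sqrt q$, the two leading terms of the left-hand side are $q^2-(d-1)(d-2)^2q^{3/2}=q^{3/2}\bigl(\sqrt q-(d-1)(d-2)^2\bigr)$, which is $\geq 0$ precisely in the required range, while the remaining part $(d-1)^2(d-2)^2q-(d-1)q-(d-1)^2(d-2)\sqrt q$ is also positive there (a one-line estimate from $\sqrt q\geq(d-1)(d-2)^2$ and $d\geq 4$); hence the left-hand side is strictly positive, including at the boundary $q=(d-1)^2(d-2)^4$, contradicting the displayed inequality. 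Therefore $C$ is not blocking.

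I expect the two delicate points to be: (i) invoking the Weil-type bound $N\leq q+1+(d-1)(d-2)\sqrt q$ for possibly singular irreducible plane curves, which is standard but should be cited precisely; and (ii) the final polynomial inequality, which is elementary but is exactly where the exponent $(d-1)^2(d-2)^4$ enters, arising from balancing the $q^2$ term against the $q^{3/2}$ term. Neither is a serious obstacle — the real content is the combinatorial observation that, because every $\F_q$-line meets an irreducible degree-$d$ curve in at most $d$ of its $\F_q$-points, the set $C(\F_q)$ cannot block all $q^2+q+1$ lines unless $N$ exceeds the Weil bound.
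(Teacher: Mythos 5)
Your argument is, in substance, the paper's own proof. Your inequality $q^2+q+1\le N(q+1)-N(N-1)/d$ is exactly the paper's Corollary~\ref{cor:key-inequality} after dividing by $d$; your estimate $k-1\ge\tfrac{2}{d}\binom{k}{2}$ for $1\le k\le d$ is the same one-line bound the paper uses in the form $(i-1)^2\le(d-1)(i-1)$; and the rest (monotonicity of the concave quadratic up to its vertex, the Hasse--Weil bound, the final polynomial comparison) proceeds identically, with the same threshold emerging from balancing $q^2$ against $q^{3/2}$. Your handling of the boundary $q=(d-1)^2(d-2)^4$ is in fact marginally cleaner: you keep the lower-order terms and observe they are strictly positive there, whereas the paper discards them, obtains only $q\le(d-1)^2(d-2)^4$, and must then argue that $(d-1)^2(d-2)^4$ is never a prime power.

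The one genuine gap is the invocation of $N\le q+1+(d-1)(d-2)\sqrt q$. That bound (Aubry--Perret, or Hasse--Weil via normalization) requires the curve to be \emph{geometrically} irreducible, while the theorem only assumes $F$ is irreducible over $\F_q$; such a curve can split into conjugate components over $\overline{\F_q}$, in which case the normalization argument you gesture at does not apply. The paper disposes of this case separately (Lemma~\ref{lem:geometric-irred}): when $C$ is not geometrically irreducible one has $|C(\F_q)|\le d^2/4$, so the points of $C(\F_q)$ meet at most $\tfrac{d^2}{4}(q+1)<q^2+q+1$ lines and $C$ cannot block. Alternatively, you could note that in this case $N\le d^2/4\le q<q+1+E$ under your hypothesis on $q$, so your displayed inequality $N\le q+1+E$ still holds and the computation goes through unchanged; either way, the case must be addressed before the Weil bound can be quoted.
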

We improve the bound $O(d^6)$ to $O(d^4)$ for the case $q=p^n$ with $n\leq 4$ in Theorem~\ref{thm-low-prime-power}.

When $d=2$, it is straightforward to see that an irreducible conic can never be a blocking set.  For cubic curves, we have a more refined result:

\begin{thm}\label{thm:cubic}
Let $C\subset \bP^2$ be an irreducible cubic curve. If $q \geq 5$, then $C$ is not blocking.
\end{thm}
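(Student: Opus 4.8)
The plan is to run a double-counting argument over all $\mathbb{F}_q$-lines. Since $C$ is an irreducible cubic, it contains no line, so every $\mathbb{F}_q$-line $L$ meets $C$ in an effective divisor $C\cap L$ of degree $3$ on $L\cong\mathbb{P}^1_{\mathbb{F}_q}$. Because $C$ and $L$ are defined over $\mathbb{F}_q$, this divisor is fixed by Frobenius, hence is of exactly one of five types: (A) $3P$ with $P\in C(\mathbb{F}_q)$; (B) $2P+P'$ with $P\neq P'$ both in $C(\mathbb{F}_q)$; (C) three distinct points $P_1+P_2+P_3$ of $C(\mathbb{F}_q)$; (D) $P_1+P_2+P_3$ with $P_1\in C(\mathbb{F}_q)$ and $P_2,P_3$ a conjugate pair over $\mathbb{F}_{q^2}$; (E) three distinct points $P_1+P_2+P_3$ forming a single Galois orbit over $\mathbb{F}_{q^3}$. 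In types (A) and (B) the point of multiplicity $\geq 2$ is automatically $\mathbb{F}_q$-rational, being the unique such point in a Frobenius-fixed divisor. The key observation is that $C\cap L$ has no $\mathbb{F}_q$-point precisely when $L$ is of type (E), so it suffices to produce one line of type (E).

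I would then set $N=|C(\mathbb{F}_q)|$ and let $\alpha,\beta,\gamma,\delta,\varepsilon$ count the $\mathbb{F}_q$-lines of types (A)--(E). Counting $\mathbb{F}_q$-lines, point--line incidences (there are $q+1$ $\mathbb{F}_q$-lines through each $\mathbb{F}_q$-point), and collinear pairs of points of $C(\mathbb{F}_q)$ (each pair lying on a unique line) yields
\begin{align*}
\alpha+\beta+\gamma+\delta+\varepsilon&=q^2+q+1,\\
\alpha+2\beta+3\gamma+\delta&=N(q+1),\\
\beta+3\gamma&=\tbinom{N}{2}.
\end{align*}
Subtracting the third identity from the second gives $\alpha+\beta+\delta=N(q+1)-\binom{N}{2}$, and then the first identity together with $\gamma\leq\tfrac13\binom{N}{2}$ (which follows from $\beta\geq 0$) yields
\[
3\varepsilon\ \geq\ N^2-(3q+4)N+3q^2+3q+3.
\]
Viewed as a quadratic in $N$, the right-hand side has discriminant $-3q^2+12q+4=16-3(q-2)^2$, which is negative once $q\geq 5$; hence it is strictly positive for every real value of $N$, forcing $\varepsilon\geq 1$. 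Thus some $\mathbb{F}_q$-line misses $C(\mathbb{F}_q)$, i.e., $C$ is not blocking.

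The attractive feature of this approach is its uniformity: it uses neither the Hasse--Weil bound nor smoothness, so the nodal and cuspidal cubics need no separate treatment (one could alternatively treat the smooth case via the trace-zero subgroup of $E(\mathbb{F}_{q^3})$, but that forces a case split). I expect the only genuinely delicate step to be justifying that the five types really exhaust all Frobenius-stable degree-$3$ divisors $C\cap L$ --- in particular correctly accounting for tangent lines and for lines through a singular point --- after which everything reduces to bookkeeping. It is also worth noting that the discriminant becomes non-negative exactly at $q=4$ (with roots $N=7,9$), which explains the hypothesis $q\geq 5$ and is consistent with the existence of blocking cubics for $q\leq 4$, such as the Hermitian cubic over $\mathbb{F}_4$.
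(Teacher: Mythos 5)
Your proof is correct and is essentially the paper's argument: the three identities for $\alpha+\delta=t_1$, $\beta=t_2$, $\gamma=t_3$, $\varepsilon=t_0$ are exactly those of Lemma~\ref{lemma:three-key-identites}, and your bound $3\varepsilon\geq N^2-(3q+4)N+3(q^2+q+1)$ with negative discriminant for $q\geq 5$ is the same quadratic and discriminant computation the paper uses (the paper phrases it as a contradiction from $t_2\geq 0$ under the assumption $t_0=0$, while you bound $t_0$ directly, but the algebra is identical). The finer classification into types (A)--(E) is sound but not needed beyond knowing $t_i=0$ for $i>3$, which is B\'ezout plus irreducibility.
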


Moreover, Theorem~\ref{thm:cubic} is sharp in the sense that there exist smooth cubic curves over $\F_q$ which are blocking when $q=2, 3, 4$. See Example~\ref{ex:cubic-small-fields}. We also establish a refined bound for smooth quartic curves.

\begin{thm}\label{thm:quartic} Suppose $C$ is a smooth plane curve of degree $4$ defined over a finite field $\mathbb{F}_q$. If $q\geq 19$, then $C$ is not blocking.
\end{thm}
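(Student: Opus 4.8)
The plan is to assume, for contradiction, that $C$ is blocking, and to derive a numerical impossibility from three ingredients: an upper bound on $|C(\F_q)|$, an upper bound on the number of tangent and four‑secant lines, and the elementary incidence identities for $\F_q$-lines weighted by the number of rational points of $C$ they contain. First I would dispose of all but finitely many $q$: Theorem~\ref{thm:low-degree} already shows a smooth quartic is not blocking once $q\ge (d-1)^2(d-2)^4 = 144$, and Theorem~\ref{thm-low-prime-power} pushes this down for prime powers, so that we are left with a finite list of $q$ with $19\le q\le C_0$ for an explicit $C_0$.

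For those remaining $q$ the first key input is that $C$ cannot be $\F_q$-Frobenius nonclassical with respect to the linear system of lines: by the Hefez--Voloch theorem a smooth plane curve of degree $d$ which is $\F_q$-Frobenius nonclassical satisfies $d\ge \sqrt q+1$, and for $d=4$ this forces $q\le 9$. Hence for $q\ge 11$ the curve $C$ is Frobenius classical, so the St\"ohr--Voloch bound for the $g^2_4$ of lines gives $N:=|C(\F_q)|\le \tfrac12\bigl((2g-2)+(q+2)d\bigr)=2q+6$; intersecting this with the Hasse--Weil--Serre bound $N\le q+1+3\lfloor 2\sqrt q\rfloor$, or with the known value of $N_q(3)$ (the maximum number of $\F_q$-points on a genus-$3$ curve), narrows the admissible range of $N$.

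Next I would run the line count. Let $a_i$ be the number of $\F_q$-lines $L$ with $|L\cap C(\F_q)|=i$, so $i\in\{0,1,2,3,4\}$. The identities $\sum_i a_i=q^2+q+1$, $\sum_i i\,a_i=(q+1)N$ and $\sum_i\binom i2 a_i=\binom N2$, together with the blocking hypothesis $a_0=0$, force
\[
a_3+3a_4 \;=\; \binom N2-(q+1)N+(q^2+q+1).
\]
I would then bound the left side from above. An $a_3$-line must be tangent to $C$ at a unique rational point — if it met $C$ transversally at its three rational points, the residual fourth intersection point would automatically be $\F_q$-rational — so $a_3\le N$. The heart of the matter is $a_4$, the number of $\F_q$-lines meeting $C$ in four distinct rational points: projecting $C$ away from a rational point $P$ gives a degree-$3$ map $\pi_P\colon C\to\mathbb P^1$, and the $a_4$-lines through $P$ are precisely the $\F_q$-fibres of $\pi_P$ that split completely; since the monodromy group of $\pi_P$ is $S_3$ (or, for special $C$, $\mathbb Z/3$) a completely split fibre has "density" at most $\tfrac13$, and summing over $P$ bounds $a_4$. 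Alternatively one can try to exploit the residuation involution on the symmetric square $C^{(2)}$ — a smooth plane quartic being non‑hyperelliptic, the class $K-P-Q$ has a unique effective representative for all $P,Q$, so $\{P,Q\}\mapsto$ its residual pair is an involution defined over $\F_q$ — together with $|C^{(2)}(\F_q)|=\tfrac12\bigl(N^2+|C(\F_{q^2})|\bigr)$, to limit the number of rational pairs whose residual pair is again rational, a quantity differing from $6a_4+3a_3$ by a bounded amount. Feeding $a_3\le N$ and the bound on $a_4$ into the displayed identity should leave it unsatisfiable for $q\ge 19$.

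The main obstacle is exactly this bound on $a_4$ when $q$ is small: the crude estimate $a_4\le\binom N2/6$ is too weak, and a Chebotarev-type count of completely split fibres carries an error $O(\sqrt q)$ whose constant (governed by the genus of the Galois closure of $\pi_P$) is far too large to be useful near $q=19$. I expect this to force a more hands‑on treatment of the finitely many remaining $q$: using the actual values of $N_q(3)$ — which for most small $q$ lie comfortably below the Hasse--Weil--Serre bound — to shrink the range of $N$, and then verifying that the displayed identity, together with $a_3\le N$ and the available structural constraints on $a_4$ (and on the $28$ bitangents and $24$ flexes, which bound the degenerate intersection types), becomes inconsistent precisely at $q=19$.
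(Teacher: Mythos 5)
Your setup is sound: the three incidence identities, the vanishing $a_0=0$, and the observation that every $3$-secant line must be tangent at a rational point (so $a_3\le N$ by smoothness) all match what the paper does. But the proposal has a genuine gap at exactly the point you flag yourself: you never actually establish a usable upper bound on $a_4$. The crude bound $\binom N2/6$ sits right at the threshold where the identity $a_3+3a_4=\binom N2-(q+1)N+(q^2+q+1)$ stops being contradictory, and you concede that both of your proposed refinements --- the Chebotarev count of completely split fibres of the trigonal projections $\pi_P$, and the residuation involution on $C^{(2)}$ --- carry error terms whose constants are hopeless near $q=19$. The fallback of ``a more hands-on treatment of the finitely many remaining $q$'' using $N_q(3)$ and the $28$ bitangents is not carried out, so as written the argument does not close for any $q$ in the critical range; it only reproves the statement for $q\ge 144$, which Theorem~\ref{thm:low-degree} already gives.

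The idea you are missing is that one should bound the \emph{two}-secant lines from \emph{below} rather than the many-secant lines from above. The paper writes $t_2+2t_3+3t_4\ge \tfrac{t_2}{2}+\tfrac{N(N-1)}{4}$ and then produces many $2$-secants as follows: each point $P\in C(\F_{q^2})\setminus C(\F_q)$ determines the $\F_q$-line $L_P$ through $P$ and $\Phi(P)$; distinct conjugate pairs give distinct lines, because a coincidence would force a line meeting the quartic in four non-rational points and hence in no rational point, contradicting the blocking hypothesis (this is the second, crucial use of blocking). Each such line is therefore either a $2$-secant or a tangent at a rational point, and smoothness caps the tangents by $N$, giving
$t_2\ge \tfrac12\bigl(\#C(\F_{q^2})-N\bigr)-N\ge\tfrac12\bigl(q^2+1-6q-N\bigr)-N$
by Hasse--Weil over $\F_{q^2}$. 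Feeding this into the identities yields a quadratic in $N$ with negative discriminant precisely for $q\ge 19$, with no case analysis and no appeal to monodromy, Frobenius classicality, or St\"ohr--Voloch. The error term here is only the $6q$ from Weil over $\F_{q^2}$, which is why the threshold lands at $19$ rather than at an unmanageable constant.
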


Another aim of our paper is to construct explicit examples of smooth or irreducible blocking plane curves. When $q$ is a nontrivial prime power, meaning that $q=p^n$ with $n\geq 2$, then it is easy to find special curves which are smooth and blocking. For example, when $q$ is a square, we have the following well-known construction.

\begin{ex}\label{ex:hermitian}
Let $q$ be a square. Consider the Hermitian curve given by the equation
$$
\mathcal{H}: \ x^{\sqrt{q}+1} + y^{\sqrt{q}+1} + z^{\sqrt{q}+1} = 0. 
$$
It is known that every $\F_q$-line meets $\mathcal{H}$ in either $1$ or $\sqrt{q}+1$ points  \cite{S97b}*{Page 212}. In particular, $\mathcal{H}$ is a smooth curve of degree $d=\sqrt{q}+1$ such that $\mathcal{H}(\F_q)$ is a blocking set. 
\end{ex}

We provide a more general construction using Frobenius nonclassical curves in Section~\ref{sect:Frob-non-classical}. Such a construction relies on the subfield structure, which is not available in $\F_q$ when $q$ is a prime. When $q=p$ is a prime, it seems more difficult to find explicit examples of blocking curves; we find a family of examples in Theorem~\ref{thm:infinitely-many-3/4} and Theorem~\ref{thm:infinitely-many-3/4-smooth}.

It is natural to ask for the minimum degree of an irreducible curve passing through a specific blocking set. We analyze this question for the projective triangle in Section~\ref{sect:projective-triangle}. Recall that the \emph{projective triangle} (first mentioned in \cite{H79}) is the blocking set given by,
$$
\Delta = \{ [0:1:-s^2], [1:-s^2:0], [-s^2:0:1] \ | \ s\in\F_q  \}
$$
with cardinality $3(q+1)/2$. We construct a smooth curve with degree $\frac{q+3}{2}$ passing through $\Delta$ in Theorem~\ref{thm:projective-triangle} when $q \equiv 3 \pmod 4$. Indeed, such a curve would have degree at least $\frac{q+3}{2}$ by B\'ezout's theorem (see Remark~\ref{rem:bezout}), so our construction is optimal.

Note that, when $p$ is an odd prime, the projective triangle $\Delta$ is of particular interest, since it serves as an example of a nontrivial blocking set of size $\frac{3}{2}(p+1)$ over $\F_p$, which is the smallest possible size by Blokhuis \cite{B94}. However, this does not imply that the smallest degree irreducible blocking curve $C$ must necessarily pass through the projective triangle (the smallest blocking set) or its image under a projective transformation. Indeed, we find geometrically irreducible blocking curves with smaller degree $d=\frac{p-1}{r}+1$ in Theorem~\ref{thm:irreducible-curves} for a fixed $r$, provided that $p \equiv 1 \pmod r$ and $p>r^4$. Using tools from analytic number theory, we can prove something even stronger.

\begin{thm}\label{thm:infinitely-many-3/4} There are infinitely many primes $p$ such that for each $d \geq 4p^{3/4}+1$, there is a geometrically irreducible nontrivially blocking curve over $\F_p$ with degree $d$.
\end{thm}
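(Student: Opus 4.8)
The plan is to combine an explicit blocking-curve construction over $\F_p$ with an analytic-number-theory input that produces infinitely many primes $p$ for which the degree of that construction can be taken below $4p^{3/4}+1$. The natural candidate construction is a curve whose $\F_p$-points contain (an affine model of) a classical small blocking set, such as a projective triangle or, more flexibly, a union of a few low-degree orbits; the key point is that one wants a single irreducible polynomial $F$ of controlled degree $d$ that vanishes on a blocking set and whose zero locus is geometrically irreducible. I would start from the Frobenius-nonclassical machinery developed in Section~\ref{sect:Frob-non-classical} of the paper: such curves automatically have the property that every $\F_p$-line meets them in an $\F_p$-point (that is what ``Frobenius nonclassical'' buys you), so they are blocking by construction, and one then only has to arrange geometric irreducibility and the degree bound. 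For a prime field $\F_p$ there is no subfield to exploit, so the degree of a Frobenius-nonclassical curve is governed by the size of the relevant Gauss/exponential-sum data, and this is exactly where the number theory enters.

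The core steps, in order, would be: (1) Write down a parametrized family of curves $C_d$ over $\F_p$ — e.g. of the shape $x^{a}y^{b}z^{c} + \cdots = 0$ or a Rédei-type construction attached to a multiplicative subgroup of $\F_p^\times$ of index $r$ — whose $\F_p$-points are forced to form a blocking set. Theorem~\ref{thm:irreducible-curves} already gives geometrically irreducible blocking curves of degree $d = \frac{p-1}{r}+1$ whenever $p\equiv 1\pmod r$ and $p>r^4$; I would take that as the engine. (2) Observe that to get degree $d \approx 4p^{3/4}$ one needs an index $r \approx \tfrac14 p^{1/4}$ dividing $p-1$, i.e. one needs primes $p$ with $p\equiv 1\pmod r$ for some $r$ of size roughly $p^{1/4}$; then $d=\frac{p-1}{r}+1 \le 4p^{3/4}+1$ and, since $p>r^4$ is comparable to the requirement, the hypotheses of Theorem~\ref{thm:irreducible-curves} hold. (3) Upgrade ``there exists such a $d$'' to ``for \emph{all} $d\ge 4p^{3/4}+1$'': given one geometrically irreducible blocking curve of degree $d_0$, one produces blocking curves of every larger degree by multiplying/adding a generic form — e.g. intersecting with extra components along an $\F_p$-line's worth of points, or more cleanly by taking $F\cdot G$ then perturbing to restore irreducibility, or simply by adding a suitably generic term of the desired degree that preserves the ``meets every line'' property while raising the degree by one; this bootstrapping from $d_0$ to all $d\ge 4p^{3/4}+1$ is a routine deformation argument. (4) Conclude via a density statement about the auxiliary primes.

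The genuinely hard step is (2)–(4)'s number-theoretic heart: producing \emph{infinitely many} primes $p$ that admit a divisor $r \mid p-1$ with $r$ of size about $p^{1/4}$ (equivalently $(p-1)/r$ small, of size about $p^{3/4}$). Asking for $p\equiv 1\pmod r$ with $r\asymp p^{1/4}$ is essentially asking for primes in the arithmetic progression $1 \bmod r$ that are not too much larger than $r^4$; by a counting/pigeonhole argument one wants, for infinitely many $r$, a prime $p$ with $r^4 < p \le (4r)^4$ (say) and $p\equiv 1\pmod r$. This is where an analytic input — a Brun–Titchmarsh or Bombieri–Vinogradov type estimate, or a Linnik-type bound on the least prime in a progression — is needed to guarantee a prime $p \equiv 1 \pmod r$ in a window of the form $(r^4, C r^4]$; Linnik's theorem alone (least prime $\ll r^L$) does not immediately give an exponent as good as $4$, so one likely needs either the best known Linnik exponent combined with a slightly larger constant, or a direct sieve argument exploiting that we get to choose $r$. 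I expect the authors instead fix $r$ to range over values for which strong results are available (e.g. $r$ prime, or $r$ built from small primes), and then invoke a result guaranteeing a prime $p\equiv 1\pmod r$ with $p \ll r^4$ for infinitely many such $r$; the whole theorem then follows by combining that with Theorem~\ref{thm:irreducible-curves} and the degree-bootstrapping of step (3). The remaining bookkeeping — checking $4p^{3/4}+1 \ge \frac{p-1}{r}+1$, checking $p>r^4$, and checking geometric irreducibility survives the deformation — is routine.
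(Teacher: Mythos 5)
Your overall strategy matches the paper's: the engine is Theorem~\ref{thm:irreducible-curves}, and the number-theoretic input is a guarantee that infinitely many primes $p$ admit a divisor $r \mid p-1$ of size about $\tfrac14 p^{1/4}$, so that $p>r^4$ and $\frac{p-1}{r}+1 \le 4p^{3/4}+1$. The paper packages that input as Corollary~\ref{cor:divisor} (proved exactly as you suspect, via Bombieri--Vinogradov to locate a good modulus $r_0\in[x^{1/4}/2,x^{1/4}]$ plus Brun--Titchmarsh to discard the primes below $\alpha x$), applied with $\theta=1/4$, $A=1/2$. Your side remarks are also on target: Linnik's theorem with the known exponent $5$ is insufficient here, and a Frobenius-nonclassical construction is a dead end over a prime field since it relies on subfield structure.

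The one genuine gap is your step (3). First, it is unnecessary: Theorem~\ref{thm:irreducible-curves} is stated for every $\ell\ge 1$, and the curve $-(ky^{\ell}+z^{\ell})x^{m}+z^{\ell}y^{m}+ky^{\ell}z^{m}=0$ has degree $m+\ell$, so the single theorem already produces a geometrically irreducible nontrivially blocking curve in \emph{every} degree $d\ge \frac{p-1}{r}+1$; no bootstrapping is required. Second, as an independent argument your step (3) would not go through as described: multiplying $F$ by another form destroys irreducibility outright, and ``perturbing to restore irreducibility'' or ``adding a suitably generic term'' is not a routine deformation here, because the blocking property is not generic --- it is enforced by making the curve pass through a specific blocking set (a positive-codimension linear condition), and one would have to verify simultaneously that the perturbed curve still contains that set, is still geometrically irreducible, and is still \emph{nontrivially} blocking (does not swallow a whole line). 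The paper avoids all of this by building the degree flexibility into the family itself via the exponent $\ell$, with irreducibility checked once by the Eisenstein-type criterion of Lemma~\ref{lem:eisenstein-criterion} (using that $p\nmid m$ forces $p\nmid\ell$ or $p\nmid(m-\ell)$, so one of the two coefficient forms has a non-repeated factor). If you replace your step (3) by this observation, your proposal becomes the paper's proof.
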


In view of Theorem~\ref{thm:infinitely-many-3/4}, one can ask for the \emph{smallest} degree of an irreducible blocking curve over $\F_p$ for $p$ prime. According to Theorem~\ref{thm-low-prime-power}, the optimal degree for a blocking curve satisfies $d\geq C_0 p^{1/4}$ for some constant $C_0$, while Theorem~\ref{thm:infinitely-many-3/4} tells us that the optimal degree must satisfy $d\leq C_1 p^{3/4}$ for some constant $C_1$. Let us briefly explain why the optimal exponent of degree is likely to be near $1/2$. We expect that for any $\varepsilon > 0$, there are many blocking sets of size at most $\lambda_{\varepsilon} p^{1+\varepsilon}$ where $\lambda_{\varepsilon}$ is a constant. Since the vector space $V$ of degree $d$ homogeneous polynomials defining plane curves has dimension $\binom{d+2}{2}$, we obtain many blocking curves provided that $\binom{d+2}{2} > \lambda_{\varepsilon} p^{1+\varepsilon}$. Indeed, passing through any specific point imposes at most one linear condition on $V$. While this furnishes numerous blocking curves of degree $d \leq C_{\varepsilon} p^{1/2+\varepsilon}$, this is only a heuristic because we cannot demonstrate irreducibility of any such curve in this abstract setting.

Constructing smooth blocking curves appears to be much more difficult. One difficulty in applying the heuristic above is the following: we want the smooth curve to pass through a blocking set with relatively large size, while we expect that the number of $\F_p$-points of most smooth curves is close to $p+1$. Nonetheless, we prove a version of Theorem~\ref{thm:infinitely-many-3/4} for smooth blocking curves:

\begin{thm}\label{thm:infinitely-many-3/4-smooth}
Let $0<\theta \leq 1/4$. Let $A$ be a fixed positive number, with $A \leq 1/2$ if $\theta=1/4$. There are infinitely many primes $p$ such that for \emph{some} $d \in [p^{1-\theta}/2A+1, 2p^{1-\theta}/A+1]$, there is a smooth nontrivially blocking curve over $\F_p$ with degree $d$.
\end{thm}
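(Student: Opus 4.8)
\emph{Proof idea.} The plan is to split Theorem~\ref{thm:infinitely-many-3/4-smooth} into a geometric ingredient and an arithmetic one. The geometric ingredient is a smooth refinement of Theorem~\ref{thm:irreducible-curves}: under hypotheses comparable to those of that theorem (say $p>r^{4}$ with $r\mid p-1$), there is a \emph{smooth} geometrically irreducible nontrivially blocking curve over $\F_p$ of degree exactly $\frac{p-1}{r}+1$. The arithmetic ingredient is that, for infinitely many primes $p$, the integer $p-1$ has a divisor $r$ in the window $[\tfrac{A}{2}p^{\theta},\,2Ap^{\theta}]$ (up to harmless $O(1/p)$ corrections from replacing $p$ by $p-1$). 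Granting both, the degree $d=\frac{p-1}{r}+1$ lands in $[\frac{p^{1-\theta}}{2A}+1,\frac{2p^{1-\theta}}{A}+1]$---the ratio $4$ between the endpoints is precisely what absorbs the dyadic freedom in $r$---and the side condition ``$A\le 1/2$ when $\theta=1/4$'' is exactly what forces $r\le p^{1/4}$, hence $p>r^{4}$ (as $p$ is prime it cannot equal $r^{4}$); for $\theta<1/4$ one has $r^{4}\asymp p^{4\theta}=o(p)$ automatically.

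For the arithmetic ingredient I would take $r$ to range over primes and look for infinitely many pairs of primes $(r,p)$ with $r\mid p-1$ and $(\tfrac{r}{2A})^{1/\theta}\le p\le (\tfrac{2r}{A})^{1/\theta}$; equivalently, for a given prime $r$, a prime $p\equiv 1\pmod r$ in a fixed dyadic window of length $\asymp(r/A)^{1/\theta}$. Since $\theta\le 1/4<\tfrac12$, the modulus $r$ is comfortably below the square root of that length, so by the Bombieri--Vinogradov theorem all but $o(R)$ of the primes $r\in[R,2R]$ have the expected number of primes $\equiv 1\pmod r$ up to $(\tfrac{2r}{A})^{1/\theta}$, and in particular at least one in the required sub-window. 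Letting $R\to\infty$ yields infinitely many admissible pairs $(r,p)$, and since each $p$ has only finitely many divisors this gives infinitely many admissible primes $p$.

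The geometric ingredient is the heart of the matter, and the point to exploit is that the blocking property in Theorem~\ref{thm:irreducible-curves} is not a fragile numerical coincidence but a structural one: the defining equation is built from the cosets of a multiplicative subgroup $H\le\F_p^{\times}$ with $[\F_p^{\times}:H]=r$ (so $|H|=\frac{p-1}{r}=d-1$), and this coset structure forces every $\F_p$-line to meet the curve in an $\F_p$-point. I would therefore isolate the linear subsystem $\mathcal{L}$ of degree-$d$ plane curves sharing this structure---e.g.\ those passing through a fixed $H$-stable configuration of points, or more invariantly those whose defining form transforms suitably under the scaling action of $H$ on a coordinate---so that \emph{every} member of $\mathcal{L}$ is nontrivially blocking for the same reason, and then run a Bertini-type argument \emph{inside} $\mathcal{L}$ to extract a smooth, geometrically irreducible member. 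Here $\dim\mathcal{L}$ is large: $\binom{d+2}{2}\asymp d^{2}\asymp p^{2(1-\theta)}\ge p^{3/2}$ dwarfs the $O(p)$ linear conditions cutting out $\mathcal{L}$, so $\dim\mathcal{L}\to\infty$; the base locus is a finite reduced set and $d\to\infty$, so a Poonen-style sieve over $\F_p$ produces a smooth member (this is where $\theta<\tfrac12$ is used, to keep the base locus small relative to $d^{2}$). The construction of a smooth curve through a prescribed structured blocking set in Theorem~\ref{thm:projective-triangle} is the model for this step. Finally, ``nontrivially'' blocking comes for free: a smooth plane curve of degree $d\le p$ contains no line, so by B\'ezout it meets every $\F_p$-line in at most $d<p+1$ points, and its $\F_p$-points cannot exhaust any line.

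The main obstacle is exactly this tension: smoothness is an open, generic condition while ``blocking'' is not, so one cannot perturb a known blocking curve and hope it stays blocking---everything hinges on first pinning down a positive-dimensional family in which blocking is \emph{forced for every member} by a closed structural reason, and only then smoothing. The Bertini step is itself delicate over $\F_p$, since the naive dimension count for smoothness of plane curves fails over finite fields, so one genuinely needs both $d\to\infty$ and the smallness of the base locus. A secondary, bookkeeping obstacle is that the construction pins the degree to the single value $\frac{p-1}{r}+1$ attached to a divisor of $p-1$, with no room to pad it to nearby degrees while preserving smoothness---which is why the statement must settle for ``some $d$'' in the window and ``infinitely many $p$'', rather than the cleaner ``all $d$, infinitely many $p$'' of Theorem~\ref{thm:infinitely-many-3/4}.
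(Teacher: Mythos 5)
Your arithmetic ingredient is essentially the paper's: Corollary~\ref{cor:divisor} is proved there by exactly the combination you describe (Bombieri--Vinogradov to locate a good modulus $r_0\asymp p^{\theta}$, Brun--Titchmarsh to discard small primes), and the role of the condition $A\le 1/2$ when $\theta=1/4$ is, as you say, to guarantee $p>r^4$. The gap is in your geometric ingredient. The structural reason the curves of Proposition~\ref{prop: general-construction} are blocking is that they contain the set $B=\{[0:0:1],[0:1:0],[1:0:0]\}\cup\{[1:y:z]: y,z \text{ are $r$-th powers in } \F_p^*\}$, and this set has $3+\left(\frac{p-1}{r}\right)^2=3+(d-1)^2$ points. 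So the linear conditions cutting out your system $\mathcal{L}$ number $\asymp d^2\asymp p^{2-2\theta}\ge p^{3/2}$, not $O(p)$; they are comparable to (indeed larger than half of) $\binom{d+2}{2}$, and the naive count gives $\dim\mathcal{L}\le 0$. The only reason nonzero members exist at all is the massive linear dependence among these conditions exploited by the ansatz $f x^m+g y^m+h z^m$ with $f+g+h=0$; but that family has dimension $O(\ell^2)=O(1)$ for fixed $\ell=\deg f$, while its base locus contains all $\asymp d^2$ points of $B$. A Bertini/Poonen sieve has no room to operate in an $O(1)$-dimensional system whose members must all be smooth at $\asymp d^2$ prescribed base points, so the "smooth the structural family" step cannot be carried out as described. (Note also that your stated model, Theorem~\ref{thm:projective-triangle}, does not use a Bertini argument either; it exhibits one explicit curve and checks its Jacobian.)

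What the paper actually does at this step is Theorem~\ref{thm:smoothcurve}: it takes the $\ell=1$ member $-(ky+z)x^{m}+zy^{m}+kyz^{m}=0$ of the family from Theorem~\ref{thm:irreducible-curves} (with $-k$ not an $r$-th power) and proves smoothness by a direct computation with the partial derivatives, organized via the vanishing of a $3\times 3$ determinant; the hypothesis $p\nmid(r^2-1)$ needed there is automatic once $p>r^4$. So the overall architecture you propose (arithmetic input $\times$ a smooth degree-$\frac{p-1}{r}+1$ blocking curve) is the right one and matches the paper, but the existence of the smooth curve is the hard content, and it is obtained by an explicit equation and hands-on singularity analysis, not by a genericity argument --- which, for the reasons above, is not available in this setting.
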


The proof of Theorem~\ref{thm:infinitely-many-3/4-smooth} relies on a general construction of a smooth blocking curve (Theorem~\ref{thm:smoothcurve}). In particular, we can construct smooth blocking curves of degree $\frac{q+1}{2}$. Note that the existence of degree $d$ smooth blocking curve does not necessarily imply the existence of degree $d+1$ smooth blocking curve. Therefore, the next result would not follow from knowing the existence of smooth blocking curves of degree $d=\frac{q+1}{2}$. 

\begin{thm}\label{thm:construction-smooth-(q+3)/2}
Suppose $q\geq 5$ with $p=\operatorname{char}(\F_q)>3$. There exists a smooth blocking plane curve over $\F_q$ with degree $d=\frac{q+3}{2}$.
\end{thm}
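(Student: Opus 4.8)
The plan is to realize the desired curve as one passing through the projective triangle $\Delta$. As recalled in the introduction, $\Delta$ is a blocking set for \emph{every} odd $q$: for a line $uX+vY+wZ=0$ with $uvw\neq 0$, one of $v/w,\ u/v,\ w/u$ is a square because their product is $1$, and the corresponding vertex-type point of $\Delta$ lies on the line; if one of $u,v,w$ vanishes, the appropriate coordinate vertex of $\Delta$ lies on the line. Hence any plane curve over $\F_q$ of degree at least $2$ that contains $\Delta$ is blocking, and it remains only to exhibit a smooth such curve of degree exactly $\frac{q+3}{2}$. When $q\equiv 3\pmod 4$ this is precisely the content of Theorem~\ref{thm:projective-triangle}, so the essential case is $q\equiv 1\pmod 4$ (with $p=\operatorname{char}(\F_q)>3$), on which I will concentrate.

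The key structural observation is that $\Delta$ meets each coordinate line in exactly $d:=\frac{q+3}{2}$ points: on $X=0$ these are the $\frac{q+1}{2}$ points $[0:1:-s^2]$ (here $-s^2$ runs over $0$ and the nonzero squares, since $-1$ is a square when $q\equiv 1\pmod 4$) together with $[0:0:1]$, and cyclically on $Y=0$ and $Z=0$. Consequently, if $F$ is any form of degree $d$ vanishing on $\Delta$, then $F|_{X=0}$ is, up to a nonzero scalar, the unique binary form of degree $d$ with those $d$ prescribed simple roots; in particular $F$ has only simple zeros along each coordinate line, so every point of $\Delta$ is automatically a smooth point of $\{F=0\}$, regardless of the choice of $F$. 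Moreover, one has an explicit cyclic form vanishing on $\Delta$, namely
\[
G=\sum_{\mathrm{cyc}}\bigl(YZ^{(q+1)/2}-Y^{(q+1)/2}Z\bigr),
\]
as one checks from $G|_{X=0}=YZ\bigl(Z^{(q-1)/2}-Y^{(q-1)/2}\bigr)$, using $\prod_{c}(Z-cY)=Z^{(q-1)/2}-Y^{(q-1)/2}$ taken over the nonzero squares $c$; and all the forms $G+XYZ\cdot K$ with $K$ of degree $\frac{q-3}{2}$ again vanish on $\Delta$, since $XYZ$ vanishes on every coordinate line.

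It therefore suffices to find a single member of the family $\{G+XYZ\cdot K\}$ whose curve is smooth at the finitely many points lying off $\Delta$. The naive choice $F=G$ fails: since $G(1,1,1)=0$ and the cyclic symmetry fixes $[1:1:1]$, that point is a singular point of $\{G=0\}$ (whereas for $q\equiv 3\pmod4$ the analogous cyclic form takes the value $6\neq 0$ there, consistent with Theorem~\ref{thm:projective-triangle} already covering that case). I would remedy this by perturbing to $F=G+XYZ\cdot K$ with $K$ chosen so that $K(1,1,1)\neq 0$, destroying the bad point, and so that no new singularity is created. The existence of a good $K$ can be obtained either by a Bertini-type argument over $\F_q$ — the linear system $\{G+XYZ\cdot K\}$ has dimension of order $q^2$ and base locus exactly $\Delta$, and passing through $\Delta$ has just been shown to be compatible with smoothness there — or by writing down an explicit $K$ and verifying the Jacobian criterion directly. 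In either case the hypothesis $p>3$ enters through the scalars $\frac{q+3}{2}$ and $\frac{q+1}{2}$ arising from the Euler relation and from differentiating the degree-$d$ monomials, which are units modulo $p$ precisely when $p\notin\{2,3\}$; the hypothesis $q\ge 5$ rules out degenerate small cases, and for the very smallest fields smoothness can be checked by hand. I expect this last step — ruling out accidental singularities of the perturbed curve away from $\Delta$ — to be the main obstacle, since a typical explicit interpolant of a blocking set is singular, which is exactly why the result is proved separately rather than deduced from Theorem~\ref{thm:smoothcurve} or from the $q\equiv3\pmod4$ case.
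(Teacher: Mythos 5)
Your reduction to the case $q\equiv 1\pmod 4$ via Theorem~\ref{thm:projective-triangle} is correct, and so is the observation that any degree-$\frac{q+3}{2}$ form vanishing on $\Delta$ is automatically smooth along $\Delta$ (it meets each coordinate line transversally there). But the step you yourself flag as ``the main obstacle'' --- producing a $K$ of degree $\frac{q-3}{2}$ so that $G+XYZ\cdot K$ is smooth away from $\Delta$ --- is exactly where the proof is missing, and it is not a routine step. The paper discusses precisely this route in the remark following Theorem~\ref{thm:projective-triangle}: your $G$ is (up to sign) the form written there, which is not merely singular at $[1:1:1]$ but \emph{reducible} (it contains the lines $x=y$, $y=z$, $z=x$); the authors propose the specific perturbation $K=x^{(q-3)/2}$ and state that they were unable to prove smoothness for all $q\equiv 1\pmod 4$. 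A ``Bertini-type argument over $\F_q$'' does not close the gap: classical Bertini requires an infinite field to produce a rational smooth member, and Poonen's finite-field Bertini is asymptotic in the degree of the moving part, whereas here $\deg K=\frac{q-3}{2}$ is pinned to $q$, so one would have to control the quantitative error terms for this coset of a linear system --- none of which is done. In effect the proposal reduces the theorem to an assertion that the paper itself leaves open.

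The paper's actual proof for $q\equiv 1\pmod 4$ takes a different route: it abandons the projective triangle and instead uses the blocking set $\{[1:0:0],[0:1:0],[0:0:1]\}\cup\{[1:y:z] : y,z \text{ non-squares}\}$, whose blocking property follows from the Weil-bound character-sum estimate of Corollary~\ref{cor:by+cz} (for $q\geq 17$, with $q=5,13$ checked by computer). The explicit curve $xy(x^m+y^m)+(xz+yz)(x^m+z^m)=0$ with $m=\frac{q-1}{2}$ passes through this set, and its smoothness is established by a direct Jacobian and determinant case analysis. To salvage your approach you would need to carry out the explicit smoothness verification for a concrete $K$, which is exactly the computation the authors report being unable to complete.
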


Similarly, the existence of degree $d$ smooth blocking curve does not necessarily imply the existence of degree $d-1$ smooth blocking curve. However, we are able to exhibit a smooth blocking set of degree $\frac{q-1}{2}$ whenever $q\equiv 3\pmod{4}$.

\begin{thm}\label{thm:construction-smooth-(q-1)/2}
Suppose $q\geq 11$ with $q\equiv 3\pmod{4}$ and $p=\operatorname{char}(\F_q)>3$. There exists a smooth blocking plane curve over $\F_q$ with degree $d=\frac{q-1}{2}$.
\end{thm}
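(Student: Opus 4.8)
The plan is to exhibit an explicit blocking set $B\subset\bP^2(\F_q)$ that meets every $\F_q$-line in at most $\frac{q-1}{2}$ points, and then to pass a smooth curve of degree $d=\frac{q-1}{2}$ through $B$; since an irreducible curve of degree $d<q+1$ cannot contain a line, such a curve is automatically nontrivially blocking. For $B$ I would take the projective triangle $\Delta$ with its three vertices \emph{deleted}. Writing $N\subset\F_q$ for the set of nonzero nonsquares (so $|N|=\frac{q-1}{2}$, and $-N$ is exactly the set of nonzero squares because $q\equiv3\pmod4$), this is
\[
\Delta_0=\{[0:1:t],\ [1:t:0],\ [t:0:1]:t\in N\},\qquad |\Delta_0|=\tfrac{3(q-1)}{2},
\]
which has exactly $\frac{q-1}{2}$ points on each side of the coordinate triangle and none at the vertices. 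To this I would adjoin a ``half'' of the conic $\mathcal{Q}:y^2=xz$, namely
\[
A=\{[1:s:s^2]:s\in\F_q^*\text{ a square}\},\qquad |A|=\tfrac{q-1}{2},
\]
and set $B=\Delta_0\cup A$, a disjoint union with $|B|=2(q-1)$ (points of $A$ have no zero coordinate).

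I would then verify that $B$ is blocking by a short case analysis. Each side of the coordinate triangle already carries $\frac{q-1}{2}\ge5$ points of $\Delta_0$. For a line $L: ax+by+cz=0$ with $abc\neq0$ (so $L$ misses all three vertices), the standard argument that the projective triangle $\Delta$ is blocking applies verbatim: $\chi(a)\chi(b)$, $\chi(b)\chi(c)$, $\chi(c)\chi(a)$ cannot all equal $-1$ since their product is $+1$, so $L$ meets $\Delta$ in one of its side-points, which (as $abc\neq0$) is not a vertex and hence lies in $\Delta_0$. The only remaining lines pass through exactly one vertex; for instance $L: by+cz=0$ through $[1:0:0]$ (with $bc\neq0$) meets the opposite side $x=0$ in $[0:1:-b/c]\in\Delta_0$ unless $-b/c$ is a nonzero square, and in that case the point $[1:-b/c:(b/c)^2]\in A$ lies on $L$. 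The lines through $[0:0:1]$ are handled identically, and the lines through $[0:1:0]$ reduce to solving $s^2=-a/c$ for a square $s$, which is possible precisely because squaring is a bijection on the group of nonzero squares --- that is, because $\frac{q-1}{2}$ is odd, i.e. $q\equiv3\pmod4$. This is the single place where the congruence is essential. Finally, any non-side line meets the three sides in three points and the conic $\mathcal{Q}$ in at most two, so $|L\cap B|\le3+2=5\le\frac{q-1}{2}$ once $q\ge11$; thus $B$ meets every line in at most $\frac{q-1}{2}$ points, with equality exactly on the three sides, so by B\'ezout any irreducible curve through $B$ already has degree at least $\frac{q-1}{2}$.

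To realize $B$ on a smooth curve of degree $d=\frac{q-1}{2}$, note first that every form
\[
F=x^{(q-1)/2}+y^{(q-1)/2}+z^{(q-1)/2}+xyz\,H(x,y,z),\qquad\deg H=\tfrac{q-7}{2},
\]
automatically vanishes on $\Delta_0$: its restriction to $z=0$ is $x^{(q-1)/2}+y^{(q-1)/2}=\prod_{t\in N}(y-tx)$, and similarly on the other two sides. Imposing vanishing on $A$ amounts to $\frac{q-1}{2}$ inhomogeneous linear conditions on the coefficients of $H$; these are consistent --- for example $H=-3\,y^{(q-7)/2}$ is a solution, here using $p>3$ --- and a dimension count gives
\[
\binom{(q+3)/2}{2}-|B|=\frac{(q+1)(q+3)}{8}-2(q-1)=\frac{(q-1)(q-11)}{8}+1\ \ge\ 1
\]
exactly when $q\ge11$, so the linear system of degree-$d$ curves through $B$ is nonempty (and positive-dimensional for $q>11$). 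The last and most delicate step is to choose such an $F$ with $C=\{F=0\}$ \emph{smooth}: Bertini yields smoothness away from the base locus, but $B$ is highly non-generic --- $\frac{q-1}{2}$ collinear points on each of three lines, plus $\frac{q-1}{2}$ points on a conic --- so one must separately ensure the general member of the system is smooth along $B$ itself. This is precisely what the general construction in Theorem~\ref{thm:smoothcurve} provides, and I would invoke it here; it is also where the hypothesis $p>3$ is genuinely needed. I expect this smoothness upgrade to be the main obstacle, together with the borderline case $q=11$, where the system is only a single curve (up to scaling) and smoothness must be checked by hand. Granting it, $C$ is a smooth curve of degree $\frac{q-1}{2}$ with $B\subseteq C(\F_q)$, hence blocking, and nontrivially so since $\deg C<q+1$.
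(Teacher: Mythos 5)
Your construction of the blocking set $B=\Delta_0\cup A$ and the verification that every $\F_q$-line meets it (and in at most $\tfrac{q-1}{2}$ points) are correct, and so is the observation that the linear system of degree-$\tfrac{q-1}{2}$ forms $x^m+y^m+z^m+xyz\,H$ through $B$ is nonempty (your explicit member $H=-3y^{(q-7)/2}$ checks out, since $s^m=1$ for $s$ a nonzero square). But the proof has a genuine gap at exactly the step you flag as delicate: nothing in your argument produces a \emph{smooth} member of that system. The appeal to Theorem~\ref{thm:smoothcurve} is a non sequitur --- that theorem exhibits one specific smooth curve $-(ky+z)x^{m}+zy^{m}+kyz^{m}=0$ of degree $m+1=\tfrac{q+1}{2}$, which neither has the right degree nor passes through your set $B$, and it provides no general mechanism for smoothing a linear system. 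Nor can Bertini be invoked: over a finite field the classical Bertini smoothness theorem fails, the effective substitutes (Poonen's sieve) apply for $d\to\infty$ with $q$ fixed rather than $d\approx q/2$, and in any case smoothness along the base locus $B$ --- which is highly special, with $\tfrac{q-1}{2}$ collinear points on each of three lines --- would have to be handled separately. So the conclusion of the theorem is not reached.

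For comparison, the paper avoids the ``pass a smooth curve through a blocking set'' strategy entirely at this degree: it writes down the single explicit curve $(x+y+z)^{m}+x^{m}+y^{m}+z^{m}=0$ with $m=\tfrac{q-1}{2}$, proves smoothness by a direct analysis of the three partial derivatives (reducing to $3(\alpha+\beta)(\alpha+1)(\beta+1)=0$, whence the need for $p>3$), and proves blocking by a case analysis on lines using the character-sum estimate of Corollary~\ref{cor:47} for $q\geq 47$, with the range $11\leq q<47$ checked by computer. If you want to salvage your approach, you would need to either verify smoothness of a specific member such as $x^{m}+y^{m}+z^{m}-3xy^{m-2}z$ by hand (a nontrivial computation you have not attempted), or find a different argument; as written, the theorem is not proved.
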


When $q\equiv 1\pmod{4}$, we expect that there should be examples with degree $\frac{q-1}{2}$ for $q\geq 13$. See Example~\ref{ex:degree-m}.

\subsection*{Structure of the paper.} In Section~\ref{sect:prelims}, we present some preliminary definitions, and discuss useful tools from number theory, algebra, and incidence geometry. In Section~\ref{sect:low-degree}, we employ point-line incidence geometry to prove Theorem~\ref{thm:low-degree}, Theorem~\ref{thm:cubic} and Theorem~\ref{thm:quartic}. We turn our attention to special constructions of blocking curves using Frobenius nonclassical plane curves in Section~\ref{sect:Frob-non-classical}. In Section~\ref{sect:projective-triangle} we construct smooth curves passing through the projective triangle. Finally, we prove Theorem~\ref{thm:infinitely-many-3/4} and Theorem~\ref{thm:infinitely-many-3/4-smooth} in Section~\ref{sect:constructions-I}, and Theorem~\ref{thm:construction-smooth-(q+3)/2} and Theorem~\ref{thm:construction-smooth-(q-1)/2} in Section~\ref{sect:constructions-II}.

\section{Preliminaries and Lemmata}\label{sect:prelims}

This section is not meant to be read in isolation, and a reader may skip to Section~\ref{sect:low-degree} and refer back to this section when necessary.

\subsection{Basic definitions.} The primary geometric object of our study is a plane curve defined over a finite field. Recall that a plane curve $C\subset\mathbb{P}^2$ is defined by a homogeneous polynomial $F\in\mathbb{F}_q[x,y,z]$. We say that $C$ is \emph{irreducible} if $F$ is an irreducible polynomial in $\mathbb{F}_q[x,y,z]$. Moreover, $C$ is \emph{geometrically irreducible} if $F$  remains irreducible in the larger ring $\overline{\F_q}[x,y,z]$. A plane curve $C$ is \emph{smooth} if for every point $P\in C$, at least one of the partial derivatives $F_x, F_y$ or $F_z$ does not vanish at $P$. Note that a smooth plane curve is geometrically irreducible.

Recall that $\Phi\colon \bP^2\to \bP^2$ is the \emph{$q$-th power Frobenius map} defined by $\Phi[x:y:z]=[x^q:y^q:z^q]$. This definition will be especially useful in Section~\ref{sect:Frob-non-classical} when we discuss Frobenius nonclassical curves.

\subsection{Character sums}
Recall that a character $\chi$ of the multiplicative group $\F_q^*$ of $\F_q$ is called a {\em multiplicative character} of $\F_q$. For a multiplicative character $\chi$, its order $r$ is the smallest positive integer such that $\chi^r=\chi_0$, where $\chi_0$ is the trivial multiplicative character of $\F_q$. The following lemma is a classical application of Weil's bound for complete character sums; see for example \cite{LN97}*{Exercise 5.66}. 

\begin{lem}\label{lem:countingsolns}
Let $r\geq 2$ be a positive integer. Let $q\equiv 1\pmod{r}$ be a prime power. Let $\chi$ be a multiplicative character of $\F_q$ with order $r$. Let $a_1,a_2,\ldots, a_k$ be $k$ distinct elements of $\F_q$, and let $\epsilon_1,\ldots, \epsilon_k \in \C$ be $r$-th roots of unity. Let $N$ denote the number of $x \in \F_q$ such that $\chi(x+a_i)=\epsilon_i$ for $1 \leq i \leq k$. Then 
$$
N \geq \frac{q}{r^k}-\bigg(k-1-\frac{k}{r}+\frac{1}{r^k}\bigg)\sqrt{q}-\frac{k}{r}.
$$
\end{lem}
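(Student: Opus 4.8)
The plan is to estimate $N$ by writing it as a character-sum average. For each $i$, the indicator that $\chi(x+a_i)=\epsilon_i$ can be expressed via the orthogonality relation for the cyclic group of $r$-th roots of unity: for $y\in\F_q^*$,
\[
\frac{1}{r}\sum_{j=0}^{r-1}\bar\epsilon_i^{\,j}\,\chi^j(y)=\mathbbm{1}[\chi(y)=\epsilon_i],
\]
and this also correctly returns $0$ when $y=0$ provided $\epsilon_i\neq 0$ (which holds, since $\epsilon_i$ is a root of unity) because then the $j=0$ term is $1/r$ but the other terms sum to $-1/r$ only when we adopt the convention $\chi^j(0)=0$ for $j\ge 1$ and $\chi^0(0)=1$; one has to be slightly careful here and it is cleaner to simply exclude the (at most $k$) values $x=-a_i$ from the count and add back the error at the end, which is where the final $-k/r$ type term will come from. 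So first I would set
\[
N'=\sum_{\substack{x\in\F_q\\ x+a_i\neq 0\ \forall i}}\ \prod_{i=1}^k\left(\frac{1}{r}\sum_{j_i=0}^{r-1}\bar\epsilon_i^{\,j_i}\chi^{j_i}(x+a_i)\right),
\]
so that $N\ge N'$ (we lose at most the contribution of the excluded points, but those contribute nonnegatively to $N$, so in fact $N\ge N'$ directly after discarding them—and we will bound $N'$ from below).

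Next I would expand the product over $i$ into $r^k$ terms indexed by tuples $\mathbf{j}=(j_1,\dots,j_k)\in\{0,\dots,r-1\}^k$, getting
\[
N'=\frac{1}{r^k}\sum_{\mathbf{j}}\left(\prod_{i=1}^k\bar\epsilon_i^{\,j_i}\right)\sum_{x}\chi^{j_1}(x+a_1)\cdots\chi^{j_k}(x+a_k).
\]
The main term comes from $\mathbf{j}=\mathbf{0}$: the inner sum is then $\sum_x 1$ over the admissible $x$, which is at least $q-k$. For every other tuple, the inner sum is a complete character sum $\sum_{x\in\F_q}\psi(f(x))$ where $\psi=\chi$ (after reducing to a single character via $\chi^{j_i}$, noting $\chi$ has order $r$) applied to a polynomial $f(x)=\prod_i (x+a_i)^{j_i}$ that is not a perfect $r$-th power in $\overline{\F_q}[x]$—this is exactly where distinctness of the $a_i$ is used, so that $f$ has at least one root of multiplicity not divisible by $r$. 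Weil's bound for such sums (the standard version, e.g.\ \cite{LN97}) gives $|\sum_x \chi(f(x))|\le (m-1)\sqrt q$ where $m$ is the number of distinct roots of $f$; since $f$ involves only the indices $i$ with $j_i\neq 0$, we have $m\le k$, but more precisely if exactly $t$ of the $j_i$ are nonzero then $m\le t$, giving the bound $(t-1)\sqrt q$ for that tuple (and for $t=1$ the bound is $0$, since $\sum_x\chi((x+a)^{j})=0$ when $r\nmid j$).

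Assembling the pieces: the number of tuples with exactly $t$ nonzero entries is $\binom{k}{t}(r-1)^t$, each contributing at most $(t-1)\sqrt q$ in absolute value, so
\[
N'\ \ge\ \frac{q-k}{r^k}-\frac{\sqrt q}{r^k}\sum_{t=2}^{k}\binom{k}{t}(r-1)^t (t-1).
\]
The remaining task is the combinatorial identity: I would evaluate $\sum_{t=0}^k\binom{k}{t}(r-1)^t(t-1)$ in closed form by differentiating the binomial theorem $(1+u)^k=\sum_t\binom kt u^t$ to handle the factor $t$, obtaining $\sum_t\binom kt u^t(t-1)=ku(1+u)^{k-1}-(1+u)^k$; at $u=r-1$ this is $k(r-1)r^{k-1}-r^k$. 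Then I add back the missing $t=0$ and $t=1$ terms ($t=0$: value $-1$; $t=1$: value $0$) to isolate $\sum_{t=2}^k$, giving $\sum_{t=2}^k\binom kt(r-1)^t(t-1)=k(r-1)r^{k-1}-r^k+1$. Dividing by $r^k$ yields the coefficient $k - \frac{k}{r} - 1 + \frac{1}{r^k}$ of $\sqrt q$, and the main term gives $\frac{q}{r^k}-\frac{k}{r^k}$; bounding $\frac{k}{r^k}\le \frac{k}{r}$ (valid since $r\ge 2$, $k\ge 1$) produces exactly the stated inequality. The only real subtlety—the part I would be most careful about—is the bookkeeping around the excluded points $x=-a_i$ and the precise form of Weil's bound for $\sum_x\chi(f(x))$ when $f$ is not squarefree; everything else is routine algebra.
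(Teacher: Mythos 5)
The paper does not actually prove this lemma; it cites it as a classical application of Weil's bound (Lidl--Niederreiter, Exercise 5.66), and your argument is precisely that classical proof: detect the conditions $\chi(x+a_i)=\epsilon_i$ by orthogonality over the $r$-th roots of unity, isolate the tuple $\mathbf{j}=\mathbf{0}$ as the main term, bound the remaining complete sums by Weil's theorem, and evaluate the resulting binomial sum, which you do correctly to recover the stated constant. The one place where your accounting is loose is the per-tuple bound: for a tuple with exactly $t$ nonzero entries, the sum restricted to admissible $x$ differs from the complete Weil sum $\sum_{x\in\F_q}\chi(f(x))$ by the $k-t$ boundary terms at $x=-a_i$ with $j_i=0$, each of modulus at most $1$, so the honest bound is $(t-1)\sqrt{q}+(k-t)$ rather than $(t-1)\sqrt{q}$, and your intermediate inequality for $N'$ is slightly stronger than the method delivers. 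This washes out: summing the extra $k-t$ over all tuples costs exactly $k/r-k/r^{k}$, which is the same slack you give away at the end when you weaken $-k/r^{k}$ to $-k/r$, so the final inequality is exactly the one claimed.
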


We apply Lemma~\ref{lem:countingsolns} to deduce two useful corollaries below. They will be used to show that the curves in Section~\ref{sect:constructions-I} and Section~\ref{sect:constructions-II} are blocking.

\begin{cor}\label{cor:by+cz}
Let $r\geq 2$ be a positive integer. Let $q\equiv 1\pmod{r}$ be a prime power such that $q>r^4$. Let $\chi$ be a multiplicative character of $\F_q$ with order $r$, and let $\omega_1$, $\omega_2 \in \C$ be two $r$-th roots of unity. Then for any $b,c \in \F_q^*$, there are $y,z \in \F_q$ such that $\chi(y)=\omega_1, \chi(z)=\omega_2$ and $by+cz=-1$. 
\end{cor}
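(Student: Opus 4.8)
The plan is to reduce Corollary~\ref{cor:by+cz} to Lemma~\ref{lem:countingsolns} by parametrizing the line $by+cz=-1$ and counting points on it with the prescribed character values. Write $z = -(1+by)/c$, so that $z$ runs over $\F_q$ as $y$ does, except that we must avoid the values of $y$ making either $y$ or $z$ equal to $0$ (since $\chi$ is only defined on $\F_q^*$). Thus I want to count $y\in\F_q$ with $y\neq 0$, $y\neq -1/b$, such that $\chi(y)=\omega_1$ and $\chi\!\left(-(1+by)/c\right)=\omega_2$. Pulling out the constant, $\chi(-(1+by)/c) = \chi(-b/c)\,\chi(y + 1/b)$, so the second condition becomes $\chi(y+1/b) = \omega_2\,\chi(-b/c)^{-1}$, which is again an $r$-th root of unity. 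Hence the system has the exact shape treated by Lemma~\ref{lem:countingsolns} with $k=2$, $a_1 = 0$, $a_2 = 1/b$ (these are distinct since $b\neq 0$), $\epsilon_1 = \omega_1$, and $\epsilon_2 = \omega_2\chi(-b/c)^{-1}$.

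Next I would apply Lemma~\ref{lem:countingsolns} with $k=2$ to get that the number $N$ of such $y$ satisfies
\[
N \geq \frac{q}{r^2} - \left(1 - \frac{2}{r} + \frac{1}{r^2}\right)\sqrt{q} - \frac{2}{r} = \frac{q}{r^2} - \left(1 - \frac 1r\right)^2\sqrt q - \frac 2r.
\]
This count $N$ already excludes $y=0$ automatically because $\chi(0)$ is undefined — more precisely, $y=0$ simply is not among the $x$ counted by the lemma. It does not, however, automatically exclude the at most one value $y=-1/b$ for which $z=0$; so the number of genuinely good pairs $(y,z)$ is at least $N-1$. It therefore suffices to show $N > 1$, i.e. $N \geq 2$, under the hypothesis $q > r^4$. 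Since $r\geq 2$, we have $(1-1/r)^2 < 1$ and $2/r \leq 1$, so $N > q/r^2 - \sqrt q - 1$. Using $q > r^4$, i.e. $\sqrt q > r^2$, gives $q/r^2 > \sqrt q$, hence $q/r^2 - \sqrt q > 0$; to get the stronger $q/r^2 - \sqrt q - 1 \geq 1$ I would argue slightly more carefully: write $t = \sqrt q > r^2 \geq 4$, so $q/r^2 - \sqrt q = t(t/r^2 - 1) \geq t(t/t - 1)$ is too weak, so instead use $t/r^2 \geq$ something via $t > r^2$ strictly together with integrality of $q$, or simply note $q/r^2 - \sqrt q - 2/r - (1-1/r)^2\sqrt q \geq 1$ follows from $q \geq r^2(\sqrt q + 2)$ which holds once $\sqrt q \geq r^2 + 2$, and then handle the finitely many boundary cases $r^4 < q \leq (r^2+1)^2$ by the cruder bound. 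I expect this elementary estimate to be the only place requiring care.

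The main (minor) obstacle is thus purely the bookkeeping around the excluded point $z=0$ and making the inequality $N\geq 2$ clean from $q>r^4$; there is no deep content beyond invoking Lemma~\ref{lem:countingsolns}. One subtlety worth stating explicitly in the write-up: the lemma's hypothesis requires $q \equiv 1 \pmod r$ and a character $\chi$ of exact order $r$, both of which are given; and the values $\epsilon_1,\epsilon_2$ must be $r$-th roots of unity, which holds since $\omega_1,\omega_2$ are $r$-th roots of unity and $\chi(-b/c)$ is an $r$-th root of unity (its $r$-th power is $\chi((-b/c)^r)$, and $\chi$ has order $r$ so $\chi^r = \chi_0$ is trivial, giving $\chi(-b/c)^r = 1$). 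Once these checks are in place, the corollary follows immediately from $N - 1 \geq 1 > 0$.
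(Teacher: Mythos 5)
Your proposal follows essentially the same route as the paper: parametrize the line as $z=-\tfrac{b}{c}(y+\tfrac1b)$, absorb the constant into the target root of unity, and apply Lemma~\ref{lem:countingsolns} with $k=2$, $a_1=0$, $a_2=1/b$. The one simplification you are missing is that the value $y=-1/b$ is \emph{already} excluded from the lemma's count (since $\chi(0)$ is not an $r$-th root of unity, exactly as for $y=0$), so you only need $N>0$ rather than $N\geq 2$, and the somewhat loose endgame you sketch becomes unnecessary: the paper simply rewrites the lower bound as $\bigl(\tfrac{q}{r^2}-\sqrt{q}\bigr)+\bigl(\tfrac{2}{r}-\tfrac{1}{r^2}\bigr)\sqrt{q}-\tfrac{2}{r}>\tfrac{\sqrt{q}-2}{r}>0$ using $q>r^4$.
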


\begin{proof}
Note that $by+cz=-1$ is equivalent to $z=-\frac{b}{c}(y+\frac{1}{b})$. Thus, it suffices to find $y \in \F_q$ such that $\chi(y)=\omega_1$ and $\chi(y+\frac{1}{b})=w_2 \chi(-\frac{c}{b})$. Using Lemma~\ref{lem:countingsolns} with $k=2$, the number of such $y$ is at least 
$$
\frac{q}{r^2}-\bigg(1-\frac{2}{r}+\frac{1}{r^2}\bigg)\sqrt{q}-\frac{2}{r}=\bigg(\frac{q}{r^2}-\sqrt{q}\bigg)+\bigg(\frac{2}{r}-\frac{1}{r^2}\bigg)\sqrt{q}-\frac{2}{r}> \frac{\sqrt{q}-2}{r}>0
$$
since $q>r^4$. Thus, such an element $y$ exists.
\end{proof}

\begin{cor}\label{cor:47}
Let $q$ be an odd prime power such that $q \geq 47$, and $\chi$ be the quadratic character of $\F_q$. Then for any nonzero $\alpha, \beta \in \F_q$ such that $\alpha \neq \beta$, there is $x \in \F_q$, such that $\chi(x), \chi(x+\alpha), \chi(x+\beta)$ have the prescribed values ($1$ or $-1$).
\end{cor}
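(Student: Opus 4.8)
The plan is to apply Lemma~\ref{lem:countingsolns} directly with $r=2$, so that $\chi$ is the quadratic character and the relevant $r$-th roots of unity are exactly $1$ and $-1$. I would take $k=3$ and set $a_1=0$, $a_2=\alpha$, $a_3=\beta$; these three elements are distinct precisely because $\alpha,\beta$ are nonzero and $\alpha\neq\beta$, which is where the hypotheses of the corollary are used. Letting $\epsilon_1,\epsilon_2,\epsilon_3\in\{1,-1\}$ be the prescribed values, the quantity $N$ in the lemma counts exactly those $x\in\F_q$ for which $\chi(x)$, $\chi(x+\alpha)$, $\chi(x+\beta)$ take the desired values, so it suffices to show $N\geq 1$, equivalently (as $N\in\Z$) that $N>0$.

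Substituting $r=2$ and $k=3$ into the bound of Lemma~\ref{lem:countingsolns} yields
\[
N \geq \frac{q}{8} - \left(3 - 1 - \frac{3}{2} + \frac{1}{8}\right)\sqrt{q} - \frac{3}{2} = \frac{q - 5\sqrt{q} - 12}{8},
\]
so the task reduces to checking the elementary inequality $q - 5\sqrt{q} - 12 > 0$ for all $q\geq 47$. Writing $t=\sqrt{q}$, the quadratic $t^2-5t-12$ has largest root $\tfrac{5+\sqrt{73}}{2}<6.78$, while $\sqrt{47}>6.85$, so the inequality holds at $q=47$; since $t\mapsto t^2-5t-12$ is increasing for $t>\tfrac{5}{2}$, it holds for every $q\geq 47$. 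Therefore $N>0$ and such an $x$ exists.

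There is essentially no serious obstacle here: the statement is a clean specialization of Lemma~\ref{lem:countingsolns}, and the only thing to verify is the numerical inequality making the Weil-type lower bound positive. The threshold $q\geq 47$ is exactly what that bound forces; it is presumably not optimal, since Weil's estimate is lossy for small fields, but sharpening it is unnecessary for the applications in Sections~\ref{sect:constructions-I} and \ref{sect:constructions-II}.
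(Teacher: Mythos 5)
Your proof is correct and follows exactly the paper's argument: both apply Lemma~\ref{lem:countingsolns} with $k=3$, $r=2$ (and $a_1=0$, $a_2=\alpha$, $a_3=\beta$, distinct by hypothesis) and reduce to the inequality $\frac{q}{8}>\frac{5}{8}\sqrt{q}+\frac{3}{2}$ for $q\geq 47$. Your verification of that inequality via the roots of $t^2-5t-12$ is just a slightly more explicit write-up of the same numerical check.
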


\begin{proof}
For $q \geq 47$, we have
$$
\frac{q}{8}> \frac{5}{8}\sqrt{q}+\frac{3}{2}.
$$
The result follows immediately from Lemma~\ref{lem:countingsolns} with $k=3$ and $r=2$.
\end{proof}

\subsection{Irreducibility criterion}

In Section~\ref{sect:constructions-I} we will construct a family of geometrically irreducible blocking curves. The following lemma provides a useful criterion to check the (absolute) irreducibility of a polynomial.

\begin{lem}\label{lem:eisenstein-criterion}
Let $K$ be a field and let $f, g\in K[y, z]$ homogeneous polynomials such that $\gcd(f, g)=1$. Assume that either $f$ or $g$ has a non-repeated irreducible factor. Then the polynomial
$$
F(x,y,z) = f(y,z) x^r + g(y,z) 
$$
is irreducible for each $r \geq 1$.
\end{lem}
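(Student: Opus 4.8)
The plan is to regard $F$ as a polynomial in the single variable $x$ with coefficients in the unique factorization domain $R = K[y,z]$ and to invoke Eisenstein's criterion at a suitable prime of $R$. One first disposes of degenerate cases: if $f=0$ then $\gcd(f,g)=g$ forces $g \in K^{*}$, and then neither $f$ nor $g$ has a non-repeated irreducible factor, against the hypothesis; symmetrically $g \neq 0$. Thus $f,g$ are both nonzero, and since $F = f x^{r} + 0\cdot x^{r-1} + \cdots + 0 \cdot x + g$, the $x$-content of $F$ in $R[x]$ equals $\gcd(f,g)=1$, so $F$ is primitive. By Gauss's lemma it then suffices to rule out a factorization of $F$ into two factors of positive $x$-degree in $R[x]$, a constant factor being forced to divide the content and hence to be a unit. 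Now suppose first that $g$ has a non-repeated irreducible factor $\pi$; since $\pi$ is prime in $R$ and $\gcd(f,g)=1$, we get $\pi \nmid f$, $\pi \mid g$, and $\pi^{2} \nmid g$, while all intermediate coefficients of $F$ vanish and are trivially divisible by $\pi$. Eisenstein's criterion at $\pi$ then rules out any such factorization, so $F$ is irreducible.

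It remains to treat the case where the non-repeated irreducible factor $\pi$ divides $f$, that is, the leading coefficient, so that Eisenstein does not apply directly. Here I would pass to the reciprocal polynomial $\tilde{F} := x^{r} F(1/x,y,z) = g x^{r} + f$, which is a genuine instance of the previous case — now $f$, carrying the non-repeated irreducible factor, plays the role of the constant term, and $\gcd(g,f)=1$ — so $\tilde{F}$ is irreducible. To descend back to $F$, consider a hypothetical nontrivial factorization $F = AB$: by primitivity $\deg_{x} A \geq 1$ and $\deg_{x} B \geq 1$, and since $x \nmid F$ (its constant term $g$ being nonzero) we have $x \nmid A$ and $x \nmid B$, so the reciprocals $A^{*} = x^{\deg_{x} A} A(1/x,y,z)$ and $B^{*} = x^{\deg_{x} B} B(1/x,y,z)$ lie in $K[x,y,z]$, are non-units, and satisfy $A^{*} B^{*} = \tilde{F}$, contradicting the irreducibility of $\tilde{F}$.

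The only delicate point is this last transfer, where one must check that passing to the reciprocal polynomial neither creates nor destroys nontrivial factorizations; it works precisely because $g \neq 0$ keeps $x \nmid F$ (so the $x$-constant terms of $A$ and $B$ survive the reciprocation, guaranteeing $\deg_x A^{*}=\deg_x A$ and $\deg_x B^{*}=\deg_x B$) and because the primitivity of $F$ prevents either factor from being a pure element of $K[y,z]$. Everything else is a direct application of Eisenstein's criterion and Gauss's lemma over the UFD $K[y,z]$, so I expect no further obstacle.
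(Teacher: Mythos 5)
Your proof is correct and follows essentially the same route as the paper: Eisenstein's criterion over the ring $K[y,z]$ when the non-repeated irreducible factor divides $g$, and reduction to that case via the reciprocal polynomial $x^r F(1/x,y,z)=g\,x^r+f$ when it divides $f$. You simply make explicit the details the paper leaves implicit (primitivity via $\gcd(f,g)=1$, Gauss's lemma, and why the substitution $x\mapsto 1/x$ transfers factorizations faithfully), and these details are all handled correctly.
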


\begin{proof} First, suppose $h(y,z)$ is an irreducible polynomial such that $h\mid g$ but $h^2\nmid g$. Since $\gcd(f, g)=1$, we know that $h\nmid f$. By Eisenstein's criterion with the underlying ring $K[y,z]$ (see for example \cite{G01}*{Page 502}), the polynomial $f(y,z) x^r + g(y,z)$ is irreducible in the ring $(K[y,z])[x]$. Since $\gcd(f, g)=1$, the polynomial is also irreducible in $K[x,y,z]$. 

Next, suppose that $h(y,z)$ is an irreducible polynomial such that $h\mid f$ but $h^2\nmid f$. By applying the same argument in the first paragraph, the polynomial $g(y,z) x^r+f(y,z)$ is irreducible. It follows that $f(y,z) x^r + g(y,z)$ is also irreducible. This is because the two polynomials are related by the transformation $x\mapsto 1/x$ and multiplication by $x^{r}$.
\end{proof}

\subsection{Divisors of  \texorpdfstring{$p-1$}{p-1}} 
In this subsection, we show that for any $\theta<1/2$, there are infinitely many primes $p$ such that $p-1$ has a divisor that is close to $p^{\theta}$. We first recall some standard notations. For any real number $x$, let $\pi(x)$ be the number of primes up to $x$. For positive integers $r$ and $a$ such that $\gcd(r, a)=1$, we use $\pi(x;r, a)$ to denote the number of primes up to $x$ which are in the arithmetic progression $a+r\Z$. The prime number theorem for arithmetic progressions states that $\pi(x;r, a)$ is very close to $\pi(x)/\phi(r)$ if $r$ is fixed and $x$ is large. For our purposes, $r$ is close to $x^\theta$, so we need deeper tools from analytic number theory. 

The following explicit version of the Brun-Titchmarsh inequality is due to Montgomery and Vaughan \cite{MV73}. 

\begin{thm}[Brun-Titchmarsh inequality]
If $x>r$, then
$$
\pi(x;r,a) \leq \frac{2x}{\phi(r)\log \frac{x}{r}}.
$$
\end{thm}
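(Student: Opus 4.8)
The plan is to deduce the inequality from the large sieve, following Montgomery and Vaughan. The analytic input I would use is the sharp large sieve inequality: if $\alpha_1,\dots,\alpha_R\in\R/\Z$ are $\delta$-spaced points and the $a_n$ are complex numbers supported on an interval of length $N$, then
$$
\sum_{s=1}^{R}\Big|\sum_n a_n e(n\alpha_s)\Big|^2\le (N+\delta^{-1})\sum_n|a_n|^2,
$$
where $e(t)=e^{2\pi i t}$. Specializing the $\alpha_s$ to the Farey fractions $b/\ell$ with $\ell\le Q$ and $\gcd(b,\ell)=1$ (which are $Q^{-2}$-spaced, so $\delta^{-1}=Q^2$) and dualizing, one obtains the standard arithmetic form of the large sieve: if $\mathcal{A}$ is a set of $Z$ integers lying in an interval of length $N$ and, for each prime $\ell\le Q$, the elements of $\mathcal{A}$ avoid $\omega(\ell)$ residue classes modulo $\ell$, then
$$
Z\le \frac{N+Q^2}{L(Q)},\qquad L(Q)=\sum_{q\le Q}\mu^2(q)\prod_{\ell\mid q}\frac{\omega(\ell)}{\ell-\omega(\ell)}.
$$
Establishing this dualization is the first step; it is a purely formal consequence of the inequality above together with orthogonality of additive characters.

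Next I would apply this with $\mathcal{A}$ equal to the set of primes $p\le x$ satisfying $p\equiv a\pmod r$, discarding the $O(Q)$ primes that are at most $Q$. Writing $p=a+rm$, the parameter $m$ runs over an interval of length $N=x/r$. For a prime $\ell\mid r$, the congruence $p\equiv a\pmod r$ together with $\gcd(a,r)=1$ already forces $\ell\nmid p$, so such primes impose no sieve condition; for a prime $\ell\nmid r$, primality of $p$ (once $p>\ell$) rules out the single class $m\equiv -a\,\overline{r}\pmod\ell$, so $\omega(\ell)=1$. Hence only squarefree $q$ coprime to $r$ contribute, and
$$
L(Q)=\sum_{\substack{q\le Q\\ \gcd(q,r)=1}}\frac{\mu^2(q)}{\phi(q)}.
$$
The identity $\tfrac{\mu^2(q)}{\phi(q)}=\sum_{\mathrm{rad}(n)=q}\tfrac1n$ (the sum over integers $n$ whose radical is $q$) then gives $L(Q)\ge\sum_{n\le Q,\,\gcd(n,r)=1}\tfrac1n$, which is a lower bound essentially of the shape $\tfrac{\phi(r)}{r}\log Q$. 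Feeding this back yields a bound on $Z=\pi(x;r,a)+O(Q)$.

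The delicate point, and the main obstacle, is the constant. The natural cutoff $Q=(x/r)^{1/2}$ makes the numerator $N+Q^2=2x/r$ while $\log Q=\tfrac12\log(x/r)$, which produces only the weaker bound $\pi(x;r,a)\le \tfrac{4x}{\phi(r)\log(x/r)}$: the crude large sieve loses a factor of $2$. Recovering the sharp constant is precisely the Montgomery–Vaughan refinement. It requires the large sieve inequality in its optimal form, with no superfluous lower-order terms, together with a weighted (Selberg-type) optimization of the sieve in place of a hard cutoff at $Q$, arranged so that the effective numerator behaves like $N$ rather than $N+Q^2$ while $L(Q)$ remains of size $\tfrac{\phi(r)}{r}\log(x/r)$. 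Carrying out this optimization, and checking that the discarded small primes and the passage from $Z$ to $\pi(x;r,a)$ cost nothing in the final constant, is where the real work lies; the hypothesis $x>r$ enters to ensure $\log(x/r)>0$ and that the interval for $m$ is nonempty.
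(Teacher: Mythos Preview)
The paper does not prove this statement at all: it is quoted as a known result of Montgomery and Vaughan \cite{MV73} and used as a black box in the proof of Corollary~\ref{cor:divisor}. So there is no ``paper's own proof'' to compare against.

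As for your sketch, it correctly identifies the right framework (the large sieve applied to the arithmetic progression, with $\omega(\ell)=1$ for primes $\ell\nmid r$) and honestly isolates the crux: the straightforward large sieve with cutoff $Q=(x/r)^{1/2}$ only yields the constant $4$, and getting down to $2$ is exactly the content of the Montgomery--Vaughan paper. However, your proposal does not actually carry out that step; you write that the weighted Selberg-type optimization ``is where the real work lies'' and leave it there. That is the entire difficulty, so what you have written is a correct outline of context rather than a proof. If you intend to supply a proof, you would need to either reproduce the Montgomery--Vaughan weighted-sieve argument (their sharpened inequality $\sum_s|\cdots|^2\le \max_s(N+\tfrac32\delta_s^{-1})\sum|a_n|^2$ with local spacing, together with the Selberg weights) or, more appropriately here, simply cite \cite{MV73} as the paper does.
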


We also need the following version of the celebrated Bombieri–Vinogradov theorem \cite{B65}.

\begin{thm} [Bombieri–Vinogradov theorem]
Let $\theta<1/2$. There is a constant $C$, such that
\begin{equation}\label{BV}
\sum_{r \leq x^{\theta}} \max_{\gcd(a,r)=1} \bigg|\pi(x;r,a)-\frac{\pi(x)}{\phi(r)}\bigg|\leq \frac{Cx}{(\log x)^2}.    
\end{equation}
\end{thm}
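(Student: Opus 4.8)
This is the classical Bombieri--Vinogradov theorem \cite{B65}, so the plan is to recall its standard proof via the large sieve. Since $\theta<1/2$ is fixed, it suffices to prove the following flexible form of the theorem: for every $A>0$ there is $B=B(A)>0$ such that
\[
\sum_{q\le Q}\ \max_{\gcd(a,q)=1}\ \Bigl|\pi(x;q,a)-\frac{\pi(x)}{\phi(q)}\Bigr|\ \ll_A\ \frac{x}{(\log x)^{A}}
\qquad\text{whenever}\quad Q\le x^{1/2}(\log x)^{-B};
\]
the stated bound is then the case $Q=x^{\theta}$, $A=2$ (using $\theta<1/2$). First I would pass from $\pi(x;q,a)$ to the von Mangoldt average $\psi(x;q,a):=\sum_{n\le x,\,n\equiv a\,(q)}\Lambda(n)$ by partial summation; the prime powers together with this passage contribute only $O(x^{1/2}\log x)$ after summing over $q\le Q$. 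Next, orthogonality of the Dirichlet characters mod $q$ gives
\[
\psi(x;q,a)-\frac{\psi(x)}{\phi(q)}=\frac{1}{\phi(q)}\sum_{\chi\ne\chi_0}\overline{\chi}(a)\,\psi(x,\chi),
\qquad\text{where}\quad \psi(x,\chi):=\sum_{n\le x}\Lambda(n)\chi(n),
\]
and replacing each $\chi$ by the primitive character inducing it (with an error of $O((\log qx)^{2})$ per character) and regrouping the moduli reduces the whole estimate to bounding
\[
\Sigma(x,Q):=\sum_{q\le Q}\frac{q}{\phi(q)}\ \sum_{\chi\bmod q}^{*}\ \max_{y\le x}\bigl|\psi(y,\chi)\bigr|,
\]
where $\sum^{*}$ runs over primitive characters; concretely it suffices to prove $\Sigma(x,Q)\ll_A x(\log x)^{-A}$ for $Q\le x^{1/2}(\log x)^{-B}$.

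I would treat the moduli in two ranges. For $q\le(\log x)^{B}$ the Siegel--Walfisz theorem gives $\psi(y,\chi)\ll_{B}y\exp(-c_{B}\sqrt{\log y})$ uniformly over primitive $\chi\bmod q$ with such $q$, so this range contributes $\ll x\exp(-c\sqrt{\log x})$, which is acceptable; the implied constant is ineffective here, as usual, because of Siegel's theorem, and although one can avoid this by isolating the at most one exceptional real modulus via Page's theorem and controlling its contribution with the Brun--Titchmarsh inequality \cite{MV73}, I would not pursue that refinement. For $q>(\log x)^{B}$ I would expand $\Lambda$ by Vaughan's identity with parameters $U=V=x^{1/3}$, writing each $\psi(y,\chi)$, up to a negligible term, as a bounded number of Type I sums $\sum_{m\le M}\alpha_{m}\sum_{mn\le y}\chi(mn)$ with $M\ll x^{2/3}$ and Type II sums $\bigl(\sum_{m\sim M}\alpha_{m}\chi(m)\bigr)\bigl(\sum_{n\sim N}\beta_{n}\chi(n)\bigr)$ with $x^{1/3}\ll M,N\ll x^{2/3}$, $MN\asymp x$, and divisor-type coefficients (so that $\sum_{m\le M}|\alpha_{m}|^{2}\ll M(\log x)^{O(1)}$, and likewise for $\beta$).

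The core of the proof is then the multiplicative large sieve inequality
\[
\sum_{q\le Q}\frac{q}{\phi(q)}\ \sum_{\chi\bmod q}^{*}\ \Bigl|\sum_{n\le N}a_{n}\chi(n)\Bigr|^{2}\ \ll\ (Q^{2}+N)\sum_{n\le N}|a_{n}|^{2}.
\]
For a Type II sum I would apply Cauchy--Schwarz in $\chi$ and then the large sieve to each of the two resulting mean squares; for a Type I sum I would first strip off the hyperbolic constraint $mn\le y$ by partial summation and then apply the large sieve directly. Running this over the $O((\log x)^{2})$ dyadic boxes for $(M,N)$, and using the ranges $x^{1/3}\ll M,N$ and $MN\asymp x$ coming from Vaughan's identity together with $\sum_{q\le Q}q/\phi(q)\ll Q$, one obtains a bound of the shape
\[
\Sigma(x,Q)\ \ll\ \bigl(x^{5/6}+x^{1/2}Q\bigr)(\log x)^{O(1)}+x\exp\!\bigl(-c\sqrt{\log x}\bigr),
\]
which is $\ll_A x(\log x)^{-A}$ once $Q\le x^{1/2}(\log x)^{-B}$ with $B=B(A)$ large enough; the inner $\max_{y\le x}$ is removed beforehand by a standard completion argument (Gallagher's device) at the cost of one further power of $\log x$ and a second application of the large sieve. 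Taking $A=2$ and noting that $x^{\theta}\le x^{1/2}(\log x)^{-B}$ for all large $x$ since $\theta<1/2$, we recover $\sum_{q\le x^{\theta}}\max_{\gcd(a,q)=1}\bigl|\pi(x;q,a)-\pi(x)/\phi(q)\bigr|\ll x/(\log x)^{2}$, which is the claimed bound (the constant $C$ being enlarged if necessary to absorb small $x$).

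The step I expect to be the main obstacle is the large-sieve estimation of the Type II (genuinely bilinear) sums: it is exactly the flexibility of the ranges $x^{1/3}\ll M,N\ll x^{2/3}$ produced by Vaughan's identity, balanced against the $(Q^{2}+N)$-shape of the large sieve, that lets the modulus run up to $Q=x^{1/2-\varepsilon}$, and making the dyadic bookkeeping and the divisor-function weights close with a clean power of $\log x$ is the delicate technical heart of the argument. By comparison, the Type I sums, the small-modulus range, the reduction to primitive characters, and the elementary manipulations in the first step are routine.
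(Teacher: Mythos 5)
The paper does not prove this statement: it is quoted verbatim as the classical Bombieri--Vinogradov theorem with a citation to \cite{B65}, so there is no internal proof to compare against, only the standard proof from the literature, which is indeed the large-sieve argument you are sketching. Your overall architecture --- pass from $\pi$ to $\psi$, expand in Dirichlet characters, reduce to primitive characters, Siegel--Walfisz for small moduli, Vaughan's identity plus the multiplicative large sieve for the rest --- is the right one.

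There is, however, a genuine error in your reduction. You claim it suffices to prove $\Sigma(x,Q)=\sum_{q\le Q}\frac{q}{\phi(q)}\sum^{*}_{\chi}\max_{y\le x}|\psi(y,\chi)|\ll_A x(\log x)^{-A}$, and you later assert a bound of the shape $(x^{5/6}+x^{1/2}Q)(\log x)^{O(1)}$ for it. Neither is correct: the diagonal term $N\sum_n|a_n|^2\asymp x(\log x)^{O(1)}$ in the large sieve already forces $\Sigma(x,Q)\gg x$ up to logarithms, and the sharp form of the basic mean value theorem is $\Sigma(x,Q)\ll\bigl(x+x^{5/6}Q+x^{1/2}Q^{2}\bigr)(\log x)^{O(1)}$, whose last term is of size about $x^{3/2}(\log x)^{-2B}$ at $Q=x^{1/2}(\log x)^{-B}$ --- far larger than $x$. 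The quantity you actually need to bound carries the weight $1/\phi(q)$, not $q/\phi(q)$, and the step that makes the argument close is a dyadic decomposition \emph{in the modulus}: on each block $Q'<q\le 2Q'$ with $Q'>(\log x)^{B_1}$ one writes $1/\phi(q)\le (1/Q')\cdot q/\phi(q)$ and applies the basic mean value theorem to get $\bigl(x/Q'+x^{5/6}+x^{1/2}Q'\bigr)(\log x)^{O(1)}$, so the offending diagonal term is damped to $x/Q'\le x(\log x)^{-B_1}$; summing over $O(\log x)$ blocks and using Siegel--Walfisz on $q\le(\log x)^{B_1}$ then gives the theorem. Your sketch has the dyadic decomposition in $(M,N)$ but omits the one in $q$, which is where the theorem is actually won. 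A secondary point: Type I sums with small $M$ cannot be handled ``by the large sieve directly'' (the bilinear large-sieve bound degenerates when one range has length close to $x$); one must exploit the unweighted long variable, e.g.\ via P\'olya--Vinogradov for primitive characters.
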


\begin{cor}\label{cor:divisor}
Let $0<\theta<1/2$ and $0<A$ be fixed constants. There are infinitely many primes $p$ such that $p-1$ has a divisor in $[Ap^{\theta}/2,2Ap^{\theta}]$.
\end{cor}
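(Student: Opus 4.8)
The plan is to count primes $p \le x$ such that $p \equiv 1 \pmod r$ for some integer $r$ in the dyadic-type window $[Ax^\theta/2, 2Ax^\theta]$; any such prime automatically has a divisor (namely $r$) in $[Ap^\theta/2, 2Ap^\theta]$ up to adjusting constants, since $p$ and $x$ are of comparable size for $p$ in a short range below $x$. More precisely, I would fix a small $\eta > 0$ and restrict attention to primes $p \in (x/2, x]$, so that $p^\theta$ and $x^\theta$ agree up to a bounded multiplicative constant; absorbing that constant into $A$ (or shrinking the window slightly) lets us replace the condition ``$r \in [Ax^\theta/2, 2Ax^\theta]$'' by ``$r \mid p-1$ with $r \in [Ap^\theta/2, 2Ap^\theta]$''. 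So it suffices to show that for infinitely many $x$, there exists $r \in [Ax^\theta/2, 2Ax^\theta]$ with $\pi(x; r, 1)$ positive — in fact we will get many such $r$.

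First I would lower-bound $\sum_{r \sim Ax^\theta} \pi(x; r, 1)$, where the sum is over $r$ in the window, using Bombieri--Vinogradov: since $\theta < 1/2$, the sum of the error terms $|\pi(x;r,1) - \pi(x)/\phi(r)|$ over \emph{all} $r \le x^\theta$ is $O(x/(\log x)^2)$, hence certainly the sub-sum over our window is too. On the main-term side, $\sum_{r \sim Ax^\theta} \pi(x)/\phi(r)$ is $\pi(x) \sum_{r \sim Ax^\theta} 1/\phi(r)$, and the standard estimate $\sum_{r \le T} 1/\phi(r) = c \log T + O(1)$ (or rather the version over a dyadic block, giving $\sum_{T/2 \le r \le T} 1/\phi(r) \gg 1$) shows this sum over the window is $\gg \pi(x) \asymp x/\log x$, which dominates the $O(x/(\log x)^2)$ error. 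Therefore $\sum_{r \sim Ax^\theta} \pi(x; r, 1) \gg x/\log x$ for all large $x$.

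Next I would show this forces \emph{some} $r$ in the window to have $\pi(x;r,1)$ large, and then rule out the degenerate possibility that a single $r$ (or a few $r$) carries the whole sum — otherwise there would be no reason a \emph{fixed} $r$ survives, but that's fine, we only need the \emph{existence} of one good $r$ for each large $x$, and positivity of the sum already gives that. Actually the cleanest route: the sum $\sum_{r \sim Ax^\theta} \pi(x;r,1) \gg x/\log x > 0$ immediately implies at least one term is positive, i.e. there is $r \in [Ax^\theta/2, 2Ax^\theta]$ and a prime $p \le x$ with $p \equiv 1 \pmod r$. To get \emph{infinitely many} primes $p$ (not just infinitely many $x$ with such a prime below them), I would run this for a sequence $x = x_j \to \infty$ chosen so the resulting primes are distinct: having found a prime $p_j \le x_j$, pick $x_{j+1} > p_j$ large enough that the asymptotic lower bound kicks in again; the new prime $p_{j+1} \le x_{j+1}$ produced might still be small, so instead I would apply the counting bound on the interval $(y, x]$ rather than $(0,x]$ — note Brun--Titchmarsh bounds each $\pi(x;r,1)$ individually, so $\sum_{r \sim Ax^\theta}[\pi(x;r,1) - \pi(y;r,1)]$ has main term $\gg (x-y)/\log x$ provided $x - y \gg x/\log x \cdot (\text{something})$, while the Bombieri--Vinogradov error for the difference is still $O(x/(\log x)^2)$; choosing $y = x/2$ works and guarantees a prime $p \in (x/2, x]$, which is what I wanted in the first paragraph anyway. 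Letting $x \to \infty$ through any sequence gives infinitely many such primes.

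**Main obstacle.** The delicate point is reconciling the window ``$r \in [Ax^\theta/2, 2Ax^\theta]$'' (needed to apply Bombieri--Vinogradov, which wants $r$ compared to a fixed $x$) with the desired conclusion ``$r \in [Ap^\theta/2, 2Ap^\theta]$'' (where $p$ is the variable prime). This is exactly why I insist on producing $p \in (x/2, x]$: then $p^\theta \in [2^{-\theta} x^\theta, x^\theta]$, a bounded ratio, and one checks that $[Ax^\theta/2, 2Ax^\theta] \subseteq [A' p^\theta/2, 2A' p^\theta]$ for a suitable $A'$ depending only on $\theta$ and $A$ — or, since the statement allows \emph{any} fixed $A$, one simply proves it for the adjusted constant and notes that the hypothesis is stated for arbitrary $A$. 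A secondary technical nuisance is the elementary estimate $\sum_{r \le T} 1/\phi(r) \gg \log T$ and its localized dyadic form; this is standard (it follows from $\phi(r) \ll r$ and $\sum 1/r$, more carefully from $\sum_{r\le T} 1/\phi(r) \sim \frac{\zeta(2)\zeta(3)}{\zeta(6)}\log T$), and I would just cite it rather than reprove it. Neither Brun--Titchmarsh nor a careful count of the main term is strictly needed if one is willing to state the result as ``there exist infinitely many $p$ and for each a divisor in a dyadic window'' — the positivity of the Bombieri--Vinogradov-controlled sum does all the work.
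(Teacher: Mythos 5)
Your proposal is essentially the paper's own argument: Bombieri--Vinogradov supplies the main term over a window of moduli $r\asymp x^{\theta}$, a second input (Brun--Titchmarsh, or Bombieri--Vinogradov again) discards the primes below $cx$ so that the surviving prime $p$ satisfies $p^{\theta}\asymp x^{\theta}$, and the window is pre-adjusted so that $r$ lands in $[Ap^{\theta}/2,2Ap^{\theta}]$ for the given $A$ (you cannot simply "prove it for an adjusted constant $A'$," since $A$ is prescribed, but your other remark about shrinking the window in $x$ handles this correctly, just as the paper's condition $\alpha^{-\theta}<2$ does). The only substantive difference is that the paper pigeonholes a single good modulus $r_0\in[x^{\theta}/2,x^{\theta}]$ out of the Bombieri--Vinogradov average, whereas you keep the whole sum over the window; both work. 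One quantitative caveat: your claim that Brun--Titchmarsh with $y=x/2$ "works" is false as stated, because the constant $2$ in Brun--Titchmarsh gives $\pi(x/2;r,1)\le \frac{x}{\phi(r)\log(x/(2r))}\approx \frac{x}{(1-\theta)\phi(r)\log x}$, and since $\frac{1}{1-\theta}>1$ this upper bound already exceeds the main term $\pi(x)/\phi(r)\sim x/(\phi(r)\log x)$; this is precisely why the paper takes $y=\alpha x$ with $\alpha<1/4$ (so that, after the cruder bound $\log(\alpha x/r)\ge \tfrac12\log x$, the loss is $4\alpha<1$). Your alternative of applying Bombieri--Vinogradov at both $x$ and $x/2$ avoids this issue entirely and makes the argument go through.
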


\begin{proof}
We prove the statement for the case $A=1$; the proof for the general case is similar (but a bit messier). 

Let $0<\alpha<1/4$ such that $\alpha^{-\theta}<2$. 
Let $x$ be sufficiently large so that $\alpha x^{1-\theta}>x^{1/2}$. By Bombieri–Vinogradov theorem, the inequality~\eqref{BV} holds. In particular, we can find some $r_0 \in [x^\theta/2, x^\theta]$ such that
$$
\bigg|\pi(x;r_0,1)-\frac{\pi(x)}{\phi(r_0)}\bigg|\leq \frac{2Cx}{x^{\theta}(\log x)^2}<\frac{2Cx}{\phi(r_0)(\log x)^2}.
$$
On the other hand, Brun-Titchmarsh inequality implies that
$$
\pi(\alpha x;r_0,1) \leq \frac{2\alpha x}{\phi(r_0)\log \frac{\alpha x}{r_0}}\leq \frac{2\alpha x}{\phi(r_0)\log (\alpha x^{1-\theta})}\leq \frac{2\alpha x}{\phi(r_0)\log ( x^{1/2})}=\frac{4\alpha x}{\phi(r_0)\log x}.
$$
Combining the above estimates, we have
$$
\pi(x;r_0,1)-\pi(\alpha x; r_0,1) \geq \frac{\pi(x)}{\phi(r_0)}-\frac{2Cx}{\phi(r_0)(\log x)^2}-\frac{4\alpha x}{\phi(r_0)\log x}.
$$
Note that $4\alpha<1$, so the prime number theorem implies that $\pi(x;r_0,1)-\pi(\alpha x;r_0,1)>0$ holds for sufficiently large $x$. In particular, there is a prime $p \in (\alpha x, x]$ such that $p \equiv 1 \pmod {r_0}$ with $r_0 \in [x^\theta/2, x^\theta]$, which implies that $r_0 \in [p^\theta/2, (p/\alpha)^\theta] \subset [p^\theta/2, 2p^\theta]$.
\end{proof}

\begin{rem}
Let $P(a,r)$ be the least prime in an arithmetic progression $a \pmod r$, where $a$ and $r$ are coprime positive integers. Linnik \cites{L44a,L44b} proved that there exist effectively computable constants $C$ and $L$ such that $P(a,r) \le Cr^L$. The constant $L$ is known as {\em Linnik's constant}. The best-known upper bound for $L$ is $5$, due to Xylouris~\cite{X11}. Using Linnik's theorem with $r=2^n$ and $L=5$, one can give a simple proof for Corollary~\ref{cor:divisor} when $\theta<1/5$. However, in our application, we need the statement to be true for $\theta\leq 1/4$. We also remark that Corollary~\ref{cor:divisor} is trivial if one replaces primes with prime powers.
\end{rem}

\subsection{Point-line incidences} The following lemma collects the three useful identities that will be repeatedly used in Section~\ref{sect:low-degree}.

\begin{lem}\label{lemma:three-key-identites}
Let $C$ be an irreducible blocking plane curve of degree $d$ defined over a finite field $\F_q$. Suppose that $t_i$ denotes the number of $\F_q$-lines intersecting $C(\F_q)$ in exactly $i$ points. Let $N=|C(\F_q)|$. Then we have $t_0=0$ and $t_i=0$ when $i>d$. Moreover,
\begin{align*} 
& \sum_{i=1}^{d} t_i=q^2+q+1, \\
& \sum_{i=1}^{d} it_i=(q+1)N, \\
& \sum_{i=2}^d t_i \binom{i}{2}=\binom{N}{2}.
\end{align*} 
\end{lem}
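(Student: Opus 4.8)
The plan is to prove the three identities by double-counting incidences between $\F_q$-points of $C$ and $\F_q$-lines in $\bP^2$, after first establishing the vanishing statements $t_0 = 0$ and $t_i = 0$ for $i > d$.

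First I would dispose of the easy parts. The claim $t_0 = 0$ is literally the definition of $C$ being a blocking curve: every $\F_q$-line meets $C(\F_q)$, so no line meets it in zero points. The claim $t_i = 0$ for $i > d$ is Bézout's theorem — since $C$ is irreducible of degree $d$, no line is a component of $C$, hence a line $L$ meets $C$ in at most $d$ points over $\overline{\F_q}$ (counted with multiplicity), so in particular $|C(\F_q) \cap L| \le d$. Thus the sums in the statement, which a priori range over all $i \ge 0$, are supported on $1 \le i \le d$.

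Next I would derive the three identities. For the first, every $\F_q$-line falls into exactly one class according to how many points of $C(\F_q)$ it contains, so $\sum_i t_i$ counts all $\F_q$-lines in $\bP^2(\F_q)$, of which there are $q^2 + q + 1$. For the second, I count pairs $(P, L)$ with $P \in C(\F_q)$, $L$ an $\F_q$-line, and $P \in L$. Summing over lines first gives $\sum_i i\, t_i$ (a line in class $i$ contributes $i$ such pairs); summing over points first gives $(q+1)N$, since each of the $N$ points of $C(\F_q)$ lies on exactly $q+1$ lines of $\bP^2(\F_q)$. For the third, I count pairs $(\{P, P'\}, L)$ where $\{P,P'\}$ is an unordered pair of distinct points of $C(\F_q)$ both lying on the $\F_q$-line $L$. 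Summing over lines first, a line in class $i$ (with $i \ge 2$) contributes $\binom{i}{2}$ such configurations, giving $\sum_{i \ge 2} t_i \binom{i}{2}$. Summing over pairs first, any two distinct points of $\bP^2(\F_q)$ determine a unique $\F_q$-line, so each of the $\binom{N}{2}$ pairs is counted exactly once, giving $\binom{N}{2}$.

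I do not expect any genuine obstacle here; the lemma is a standard incidence-counting bookkeeping statement. The only point requiring a (trivial) verification is that the line through two distinct $\F_q$-rational points is itself defined over $\F_q$ — which holds because its defining linear form can be written with coordinates that are $2\times 2$ minors of the matrix of the two points, hence lie in $\F_q$. Similarly, one should note that the identities are stated with the sums already restricted to the ranges where $t_i$ can be nonzero, so no convergence or truncation issues arise once $t_0 = 0$ and $t_i = 0$ ($i > d$) are in hand.
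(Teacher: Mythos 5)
Your proof is correct and is exactly the standard point-line incidence double-counting that the paper invokes (the paper itself only sketches this and defers the details to the proof of Proposition 2.1 in \cite{AG23}, likewise using B\'ezout's theorem for the vanishing of $t_i$ with $i>d$). No issues.
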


\begin{proof}
The proof relies on a standard counting of point-line incidences in two different ways. Note that the sums run up to $i=d$, because $t_i=0$ for $i>d$ by B\'ezout's theorem which is applicable as $C$ is irreducible. For complete details, see for example the proof of \cite{AG23}*{Proposition 2.1}.
\end{proof}

\begin{rem}
The sequence $\{t_i\}$, known as the \emph{intersection distribution}, makes sense for any point set $S$ in $\mathbb{P}^2(\F_q)$. Such distribution is closely related to many problems in finite geometry \cite{LP20}*{Remark 1.4}.
In particular, Li and Pott \cite{LP20} studied the sequence $\{t_i\}$ when $S$ is the graph of a polynomial $f\in \mathbb{F}_q[x]$, and found connections to permutation polynomials \cite{AGW11}.
\end{rem}

As an immediate corollary of B\'ezout's theorem (or Lemma~\ref{lemma:three-key-identites}), we have a criterion for showing that a blocking curve is nontrivially blocking.

\begin{cor}\label{cor:d<q+1-nontrivial} If $C\subset \bP^2$ is an irreducible blocking plane curve of degree $d<q+1$, then $C$ is nontrivially blocking.
\end{cor}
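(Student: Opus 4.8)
The plan is to argue by contradiction directly from B\'ezout's theorem, exactly in the form already recorded in Lemma~\ref{lemma:three-key-identites}. Recall that, by the standing convention of this paper, an irreducible plane curve has degree $d\geq 2$; in particular, an $\F_q$-line $L$ can never be a component of such a $C$, so $C$ and $L$ meet in at most $d$ points over $\overline{\F_q}$, hence \emph{a fortiori} $|C(\F_q)\cap L|\leq d$ for every $\F_q$-line $L$. Under the hypothesis $d<q+1$ this gives $|C(\F_q)\cap L|\leq d<q+1$ for all $L$.

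Now suppose, for contradiction, that $C$ is blocking but \emph{not} nontrivially blocking. By definition this means $C(\F_q)$ contains all $\F_q$-points of some $\F_q$-line $L$; since an $\F_q$-line carries exactly $q+1$ points of $\bP^2(\F_q)$, this forces $|C(\F_q)\cap L|=q+1>d$, contradicting the estimate of the previous paragraph. Hence no such $L$ exists, and $C$ is nontrivially blocking. Equivalently, one can phrase the argument purely through the intersection distribution: Lemma~\ref{lemma:three-key-identites} gives $t_i=0$ for every $i>d$, and since $d<q+1$ we conclude $t_{q+1}=0$, i.e.\ no $\F_q$-line meets $C(\F_q)$ in $q+1$ points, which is precisely the nontriviality assertion.

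There is essentially no obstacle here: the statement is an immediate corollary of B\'ezout's theorem, which is why it is presented as a corollary of Lemma~\ref{lemma:three-key-identites}. The only point requiring (minor) care is the observation that the line $L$ is not a component of $C$; this is where irreducibility together with $d\geq 2$ enters, and it is genuinely needed, since for $d=1$ the conclusion fails (a line is a trivial blocking set).
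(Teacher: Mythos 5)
Your proof is correct and follows exactly the route the paper intends: the corollary is stated as an immediate consequence of B\'ezout's theorem (equivalently, the vanishing of $t_i$ for $i>d$ in Lemma~\ref{lemma:three-key-identites}), which is precisely your argument, including the correct observation that irreducibility with $d\geq 2$ prevents the line from being a component. No issues.
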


As another corollary of Lemma~\ref{lemma:three-key-identites}, we obtain the following inequality which relates the degree and the number of rational points of a blocking curve, and the cardinality of the ground field. 

\begin{cor}\label{cor:key-inequality}
Let $C$ be an irreducible blocking plane curve of degree $d$ over $\F_q$. Using the notation in Lemma~\ref{lemma:three-key-identites}, the following inequality holds:
$$
N(d(q+1)+1-N)\geq d(q^2+q+1).
$$
\end{cor}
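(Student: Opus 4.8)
The plan is to combine the three identities from Lemma~\ref{lemma:three-key-identites} with the single elementary observation that, since $C$ is blocking and irreducible of degree $d$, every $\F_q$-line meets $C(\F_q)$ in between $1$ and $d$ points; hence for each index $i$ with $1\le i\le d$ one has $(i-1)(d-i)\ge 0$. Summing against the nonnegative weights $t_i$ gives $\sum_{i=1}^{d} t_i(i-1)(d-i)\ge 0$, and the corollary will drop out after expanding this sum in terms of the known moments of the sequence $\{t_i\}$.

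First I would record the moments. Because $C$ is blocking we have $t_0=0$, and by B\'ezout's theorem $t_i=0$ for $i>d$, so all sums range over $1\le i\le d$. Lemma~\ref{lemma:three-key-identites} supplies $\sum t_i=q^2+q+1$, $\sum i\,t_i=(q+1)N$, and $\sum i(i-1)t_i=N(N-1)$; adding the last two yields the second moment $\sum i^2 t_i = N(N-1)+(q+1)N = N(N+q)$.

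Next I would expand $(i-1)(d-i)=(d+1)i-i^2-d$, so that
\[
0\le \sum_{i=1}^{d} t_i(i-1)(d-i) = (d+1)\sum i\,t_i - \sum i^2 t_i - d\sum t_i = (d+1)(q+1)N - N(N+q) - d(q^2+q+1).
\]
Since $(d+1)(q+1)N-N(N+q)=N\big((d+1)(q+1)-N-q\big)=N\big(d(q+1)+1-N\big)$, this rearranges to exactly $N\big(d(q+1)+1-N\big)\ge d(q^2+q+1)$, as claimed.

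There is essentially no obstacle here once one sees that the right nonnegative test function is the quadratic $i\mapsto (i-1)(d-i)$, which vanishes precisely at the two extreme admissible intersection multiplicities $i=1$ and $i=d$; this is the analogue of the usual second-moment (variance) argument in point-line incidence geometry, and the only real choice in the proof is this weighting.
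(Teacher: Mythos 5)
Your proof is correct and is essentially the same as the paper's: the paper bounds $\sum_{i}(i-1)^2t_i\le (d-1)\sum_i(i-1)t_i$, which is precisely your nonnegativity statement $\sum_i t_i(i-1)(d-i)\ge 0$ in disguise, and both arguments then use the identical moment identities from Lemma~\ref{lemma:three-key-identites}. Your packaging via the single test quadratic $(i-1)(d-i)$ is a slightly cleaner presentation of the same computation.
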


\begin{proof}
By Lemma~\ref{lemma:three-key-identites}, we have
$$
\sum_{i=1}^{d} t_i=q^2+q+1, \quad \sum_{i=1}^{d} it_i=(q+1)N, \quad \sum_{i=1}^d t_i \binom{i}{2}=\binom{N}{2}
$$
where the third sum starts with $i=1$ for convenience. Now, the last two equations imply that
$$
\sum_{i=1}^d i^2t_i=2\sum_{i=1}^d t_i \binom{i}{2}+\sum_{i=1}^{d} it_i=N(N-1)+(q+1)N=N(N+q).
$$
Thus,
\begin{equation*}\label{it_i}
\sum_{i=1}^{d} (i-1)t_i=(q+1)N-(q^2+q+1),
\end{equation*}
which implies that
$$
\sum_{i=1}^{d} (i-1)^2t_i\leq (d-1) \sum_{i=1}^{d} (i-1)t_i= (d-1)(q+1)N-(d-1)(q^2+q+1).
$$
On the other hand,
\begin{equation*}
\sum_{i=1}^{d} (i-1)^2t_i=\sum_{i=1}^{d} i^2t_i-2\sum_{i=1}^{d} it_i+\sum_{i=1}^{d} t_i=N(N+q)-2(q+1)N+q^2+q+1.
\end{equation*}
Combining the two estimates above, we get
$$
(d-1)(q+1)N-(d-1)(q^2+q+1) \geq N(N+q)-2(q+1)N+q^2+q+1=N(N-q-2)+q^2+q+1.
$$
Simplifying yields
\begin{equation*}\label{ineq-key-thm-lower-bound}
N(d(q+1)+1-N)\geq d(q^2+q+1),    
\end{equation*}
as desired.
\end{proof}

\section{Low degree plane curves are not blocking}\label{sect:low-degree}

In this section, we prove Theorem~\ref{thm:low-degree} to establish that curves of low degree can never be blocking, and present an improved result in Theorem~\ref{thm-low-prime-power} for the case $q=p^n$ when $n\leq 4$. We also prove Theorem~\ref{thm:cubic} and Theorem~\ref{thm:quartic}, which provide refined results for cubic and quartic curves, respectively.

\subsection{Proof of Theorem~\ref{thm:low-degree}}

Before we proceed with the proof of Theorem~\ref{thm:low-degree}, we present a quick lemma that allows us to reduce to the case of geometrically irreducible curves.

\begin{lem}\label{lem:geometric-irred}
Let $C$ be an irreducible plane curve of degree $d$ defined over $\F_q$. Suppose that $C$ is not geometrically irreducible. If $q\geq d^2/4$, then $C$ is not blocking.
\end{lem}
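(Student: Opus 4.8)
\textbf{Proposal for the proof of Lemma~\ref{lem:geometric-irred}.}
The plan is to exploit the fact that an irreducible but not geometrically irreducible curve $C$ over $\F_q$ is a \emph{conjugate orbit} of an absolutely irreducible component defined over some extension $\F_{q^e}$ with $e \geq 2$. More precisely, if $F$ factors over $\overline{\F_q}$, its irreducible factors are permuted transitively by the Frobenius $\Phi$, so there are $e \mid d$ such factors $F = F_1 \cdots F_e$ (after scaling), each of degree $d/e$, and the Galois group acts transitively on them. The key structural observation is that an $\F_q$-point $P \in C(\F_q)$ must be fixed by $\Phi$, hence lies on $F_i = 0$ if and only if it lies on $F_j = 0$ for all $i,j$ (apply $\Phi$); therefore $C(\F_q) \subseteq C_1 \cap C_2$ where $C_1 = \{F_1 = 0\}$ and $C_2 = \{F_2 = 0\}$ are two distinct curves over $\overline{\F_q}$. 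Since $\gcd(F_1, F_2) = 1$, Bézout's theorem bounds $|C(\F_q)| \leq \deg F_1 \cdot \deg F_2 = (d/e)^2 \leq (d/2)^2 = d^2/4$.

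Next I would turn this point count into the non-blocking conclusion. A blocking set in $\bP^2(\F_q)$ must have size at least $q + \sqrt{q} + 1$ if nontrivial, and size at least $q+1$ if trivial; in either case $|B| \geq q+1 > q \geq d^2/4$ when $q \geq d^2/4$ — wait, one must be slightly careful at the boundary, so let me instead argue directly: if $C(\F_q)$ were a blocking set, then every one of the $q+1$ lines through a fixed point $P_0 \notin C(\F_q)$ (such a point exists since $|C(\F_q)| \leq d^2/4 < q^2+q+1$) would need to meet $C(\F_q)$, forcing $|C(\F_q)| \geq q+1$. Combined with the Bézout bound $|C(\F_q)| \leq d^2/4 \leq q$, this is a contradiction. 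Hence $C$ is not blocking. (If one prefers to avoid choosing $P_0$, simply note that any blocking set has cardinality $> q$, again contradicting $|C(\F_q)| \le d^2/4 \le q$.)

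The main obstacle, such as it is, is getting the combinatorial structure of the conjugate factors exactly right: one needs to confirm that the factors over $\overline{\F_q}$ all have the \emph{same} degree $d/e$ and that $\Phi$ permutes them transitively (this is standard Galois descent, since $F \in \F_q[x,y,z]$ is $\Phi$-invariant and irreducible over $\F_q$), and then to verify the clean claim $C(\F_q) \subseteq \{F_1 = 0\} \cap \{F_2 = 0\}$ with $e \geq 2$ guaranteeing these are genuinely two coprime polynomials. Once that is in place, Bézout and the trivial size bound for blocking sets finish the argument immediately; no character sums or incidence identities from Lemma~\ref{lemma:three-key-identites} are needed here, which is why this lemma serves only as the reduction step feeding into the geometrically irreducible case handled by the incidence-geometry machinery in the rest of the section.
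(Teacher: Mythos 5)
Your proposal is correct and follows essentially the same route as the paper: both arguments reduce to the bound $|C(\F_q)|\leq d^2/4$ for an irreducible but not geometrically irreducible curve, and then observe that such a small set cannot block all lines (the paper counts that at most $\frac{d^2}{4}(q+1)\leq q(q+1)<q^2+q+1$ lines meet $C(\F_q)$, while you note that a blocking set must have at least $q+1$ points). The only difference is that you supply the Galois-conjugate/Bézout proof of the point bound, which the paper simply cites as well known from \cite{AG23}*{Remark 2.2}; your derivation of it is sound.
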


\begin{proof}
Under the given hypothesis, it is well-known that $|C(\F_q)|\leq d^2/4$  \cite{AG23}*{Remark 2.2}. Thus, the total number of $\F_q$-lines passing through some point of $C(\F_q)$ is at most $$\frac{d^2}{4}\cdot (q+1)\leq q(q+1) < q^2+q+1$$ and hence there is some $\F_q$-line $L$ which does not meet $C(\F_q)$. Thus, $C$ is not blocking. 
\end{proof}

Next, we give a proof of Theorem~\ref{thm:low-degree}. We remark that Theorem~\ref{thm:low-degree} improves \cite{AG23}*{Proposition 2.1} by the multiplicative factor given by $(\frac{1+\sqrt{2}}{2})^2 \approx 1.457$.

\begin{proof}[Proof of Theorem~\ref{thm:low-degree}]
Suppose, to the contrary, that $C$ is blocking. In view of Lemma~\ref{lem:geometric-irred}, we may assume that $C$ is geometrically irreducible. 

Let $N$ denote the number of $\F_q$-points of $C$. Motivated by Corollary~\ref{cor:key-inequality}, we are led to consider the function $f(x)\colonequals x(d(q+1)+1-x)$.  Note that $f(x)$ increases as a function of $x$ whenever $0\leq x<\frac{d(q+1)+1}{2}$. 

Let us first show that $N$ must be in the interval $\left(0, \frac{d(q+1)+1}{2}\right)$. By the Hasse-Weil bound for geometrically irreducible curves \cite{AP96}*{Corollary 2.5}:
$$N \leq q+1+(d-1)(d-2)\sqrt{q}.$$
It suffices to establish $q+1+(d-1)(d-2)\sqrt{q}<\frac{d(q+1)+1}{2}$. Since $d \geq 4$ and $q\geq (d-1)^2(d-2)^4$, it follows that $q\geq 16(d-1)^2$. Therefore, 
\begin{align*}
\frac{d(q+1)+1}{2}-q-1-(d-1)(d-2)\sqrt{q}
&>\frac{(d-2)(q+1)}{2}-(d-1)(d-2)\sqrt{q}\\
&>(d-2)\left(\frac{q}{2}-(d-1)\sqrt{q}\right)\geq 0
\end{align*}
as desired. This shows $N$ is in the interval where $f(x)$ is increasing. As a result,
\begin{align*}
& N(d(q+1)+1-N) = f(N) \leq f(q+1+(d-1)(d-2)\sqrt{q})= \\
&= (d-1)(q+1)^2+q+1+((d-2)(q+1)+1)(d-1)(d-2)\sqrt{q}-(d-1)^2(d-2)^2q.
\end{align*}
Thus, Corollary~\ref{cor:key-inequality} implies that
$$
q^2 \leq 2(d-1)q+(d-1)(d-2)^2q\sqrt{q}+(d-1)^2(d-2)\sqrt{q}-(d-1)^2(d-2)^2q.
$$
Since $q \geq 4(d-1)^2$ and $d \geq 4$, we have
$$
(d-1)^2(d-2)^2q \geq 2(d-1)^3(d-2)^2\sqrt{q}, \quad  (d-1)^2(d-2)^2q \geq 4(d-1)q,
$$
thus
$$
(d-1)^2(d-2)^2q\geq  2(d-1)q+(d-1)(d-2)^2\sqrt{q}.
$$
We conclude that 
$$
q^2 \leq (d-1)(d-2)^2q\sqrt{q},
$$
that is, $q\leq (d-1)^2(d-2)^4$. Since $q$ is a prime power and $(d-1)^2(d-2)^2$ has at least two distinct prime factors for $d\geq 4$, we deduce that $q<(d-1)^2(d-2)^4$. This is a contradiction, and the proof is complete.
\end{proof}

Before discussing our next result, we mention a fundamental result on blocking sets. A blocking set of size less than $3(q+1)/2$ is known as a {\em small blocking set}. Sz{\H{o}}nyi \cite{S97}*{Corollary 4.8} proved that a small blocking set must have a special incidence structure.

\begin{thm}[\cite{S97}] \label{1modp}
If $B \subset \bP^2(\F_q)$ is a nontrivial blocking set of size less than $3(q+1)/2$, then
each line intersects $B$ in $1$ modulo $p$ points. 
\end{thm}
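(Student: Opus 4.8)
The plan is to use the \emph{R\'edei-polynomial method}. First I would take $B$ to be a \emph{minimal} blocking set, so that every point of $B$ is essential and hence lies on a tangent line; this is what makes the argument work, and is how the cited result should be read --- without minimality the conclusion can fail (for instance, for $q\ge 9$ a square, a Baer subplane of $\bP^2(\F_q)$ plus one extra point is a nontrivial blocking set of size $<\tfrac32(q+1)$ meeting certain lines in $2\not\equiv1\pmod p$ points). Write $|B|=q+k$; the size hypothesis gives $k-1<\tfrac{q+1}{2}$. To prove $|\ell_0\cap B|\equiv1\pmod p$ for a given $\F_q$-line $\ell_0$ (the claim being obvious when $\ell_0$ is a tangent), I would pick a point $P_0\in B\setminus\ell_0$ --- possible since $|B|>q+1$ --- lying on a tangent $t$ with $B\cap t=\{P_0\}$, and after a projective change of coordinates arrange $t=\{Z=0\}$ and $P_0=(0:1:0)$, so that $\ell_0$ (being neither $t$ nor a line through $P_0$) has a finite direction $(1:y_0:0)$ with $y_0\in\F_q$. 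With $B\setminus t=\{(a_i,b_i):1\le i\le q+k-1\}\subset\mathbb{A}^2(\F_q)$, form the R\'edei polynomial
$$
R(X,Y)=\prod_{i=1}^{q+k-1}\bigl(X+a_iY-b_i\bigr)\in\F_q[X,Y],\qquad \deg_X R=q+k-1,
$$
whose defining feature is that, for each $y\in\F_q$, the values $x_0\in\F_q$ parametrize the $q$ affine lines through $(1:y:0)$ (via $x_0\mapsto\ell_{x_0,y}$), and the multiplicity of $x_0$ as a root of $R(X,y)$ is the number of points of $B\setminus t$ on $\ell_{x_0,y}$; since $P_0\notin\ell_{x_0,y}$, this multiplicity equals $|\ell_{x_0,y}\cap B|$.

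The next step uses that $B$ is blocking: every $\ell_{x_0,y}$ meets $B\setminus t$, so for each $y\in\F_q$ every element of $\F_q$ is a root of $R(X,y)$, i.e. $(X^q-X)\mid R(X,y)$. Dividing, $R=(X^q-X)F+\rho$ with $\deg_X\rho<q$; since $R$ and $(X^q-X)F$ both vanish on $\F_q\times\F_q$, so does $\rho$, which forces $\rho=(Y^q-Y)\sigma$. Hence
$$
R(X,Y)=(X^q-X)\,F(X,Y)+(Y^q-Y)\,\sigma(X,Y),\qquad\deg_X F=k-1,
$$
and $f_y(X):=R(X,y)/(X^q-X)=F(X,y)$ is, for each $y\in\F_q$, a monic polynomial of degree $k-1$. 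Summing multiplicities,
$$
\sum_{x_0\in\F_q}\bigl(|\ell_{x_0,y}\cap B|-1\bigr)=|B\setminus t|-q=k-1=\deg f_y,
$$
so $f_y$ splits completely over $\F_q$, with the exponent of $(X-x_0)$ in it equal to $|\ell_{x_0,y}\cap B|-1$. Consequently the theorem becomes \emph{equivalent} to the statement that $f_y$ is a $p$-th power in $\F_q[X]$ for every $y\in\F_q$: granting this, $|\ell_0\cap B|=1+(\text{exponent of }(X-x_0)\text{ in }f_{y_0})\equiv1\pmod p$.

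Finally, to show each $f_y$ is a $p$-th power --- the hard part --- I would start from the characteristic-$p$ identity obtained by differentiating $R(X,y)=(X^q-X)f_y(X)$ in $X$: since $q\equiv0\pmod p$,
$$
R_X(X,y)=-f_y(X)+(X^q-X)f_y'(X),\qquad\text{hence}\qquad R_X(X,y)\equiv-f_y(X)\pmod{X^q-X}.
$$
Since the linear forms $X+a_iY-b_i$ are pairwise distinct, $R$ is separable in $X$ over $\F_q(Y)$, so $R$ and $R_X$ share no factor there. Combining this with R\'edei's theory of fully reducible lacunary polynomials over $\F_q$ --- and with the accompanying analysis of the tangent lines through the points $(a_i,b_i)$, in the spirit of Segre's lemma of tangents --- while exploiting that $\deg f_y=k-1<\tfrac{q+1}{2}$ and that the $f_y$ are all specializations of a single low-degree polynomial $F$, should force every root multiplicity of $f_y$ to be divisible by $p$. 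I expect this lacunary-polynomial analysis --- turning the structural constraints on $R$ into $p$-divisibility of the line intersection numbers --- to be the main obstacle; the R\'edei-polynomial setup and the reduction of an arbitrary line to one of finite direction (via minimality) are routine by comparison.
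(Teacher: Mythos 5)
This statement is quoted from Sz\H{o}nyi \cite{S97}*{Corollary 4.8}; the paper gives no proof of its own, so there is nothing internal to compare against. Your setup is the standard (and the cited source's) route: the R\'edei polynomial, the observation that $(X^q-X)\mid R(X,y)$ for every $y$, the decomposition $R=(X^q-X)F+(Y^q-Y)\sigma$, and the translation of the claim into ``$f_y=F(X,y)$ is a $p$-th power for every $y$.'' All of that bookkeeping is correct, including the reduction of an arbitrary line $\ell_0$ to a line of finite direction via a tangent through a point of $B\setminus\ell_0$. Your side remark about minimality is also substantively correct: as literally stated in the paper the theorem is false (a Baer subplane of $\bP^2(\F_9)$ plus one external point has $14<15$ points and is met by some lines in exactly $2$ points), so the hypothesis should read ``minimal nontrivial blocking set''; the paper's application in Theorem~\ref{thm-low-prime-power} survives by passing to a minimal blocking subset of $C(\F_q)$, since that already forces $t_1=q^2+q+1$ for the subset and yields the same divisibility contradiction.

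However, your proposal has a genuine gap: the assertion that every root multiplicity of $f_y$ is divisible by $p$ \emph{is} the theorem, and you do not prove it --- you defer it to ``R\'edei's theory of fully reducible lacunary polynomials'' and ``the spirit of Segre's lemma of tangents'' without executing any argument. Everything you do establish (complete splitting of $f_y$, $\deg f_y=k-1<\tfrac{q+1}{2}$, $R_X\equiv -f_y\pmod{X^q-X}$, separability of $R$ in $X$ over $\F_q(Y)$) is preparatory; none of it yet uses minimality or the tangent lines in a way that produces $p$-divisibility, and a local computation at a root $x_0$ of multiplicity $m$ only shows that the multiplicity of $x_0$ in $R_X(X,y)$ jumps when $p\mid(m+1)$, which points in the wrong direction. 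The actual proof in \cite{S97} requires a further, nontrivial analysis (roughly: showing that $F(X,Y)$ and the curve it defines absorb all the incidences forced by the tangents at the points of $B$, and then invoking the structure theory of fully reducible polynomials of the lacunary form $X^qg+h$ with $\deg g,\deg h<\tfrac{q+1}{2}$ to conclude that the non-$p$-th-power part is trivial). Until that step is written out, the proposal is a correct reduction, not a proof.
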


When $q=p^n$, where $n \leq 4$, the $O(d^6)$ bound in Theorem~\ref{thm:low-degree} can be further improved to $O(d^4)$.

\begin{thm}\label{thm-low-prime-power}
Let $C$ be an irreducible plane curve of degree $d\geq 4$ defined over $\F_q$, where $q=p^n$ such that $n \in \{1,2,3,4\}$. If $q\geq 4(d-1)^2(d-2)^2$, then $C$ is not blocking.
\end{thm}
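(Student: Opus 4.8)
The plan is to argue by contradiction, extracting from Sz\H{o}nyi's structure theorem (Theorem~\ref{1modp}) a very rigid description of the line-intersection numbers of $C(\F_q)$ and then showing that this description is incompatible with the hypotheses $q\ge 4(d-1)^2(d-2)^2$ and $q=p^n$ with $n\le 4$. The role of the restriction $n\le 4$ is precisely that it forces $p\ge q^{1/4}$; together with the lower bound on $q$ this makes $p$ exceed $d$, which collapses the intersection distribution to a single nonzero entry and produces a divisibility contradiction.

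Suppose $C$ is blocking. As in the proof of Theorem~\ref{thm:low-degree}, Lemma~\ref{lem:geometric-irred} lets us assume $C$ is geometrically irreducible, since $q\ge 4(d-1)^2(d-2)^2\ge d^2/4$. Since $q>d$, Corollary~\ref{cor:d<q+1-nontrivial} then shows $C$ is nontrivially blocking. Next I would bound $N\colonequals|C(\F_q)|$ from above: the Hasse--Weil bound \cite{AP96}*{Corollary 2.5} gives $N\le q+1+(d-1)(d-2)\sqrt q$, while $q\ge 4(d-1)^2(d-2)^2$ gives $\sqrt q\ge 2(d-1)(d-2)$, hence $(d-1)(d-2)\sqrt q\le q/2$ and therefore $N\le \tfrac{3q}{2}+1<\tfrac{3(q+1)}{2}$. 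Thus $C(\F_q)$ is a nontrivial blocking set of size less than $3(q+1)/2$, so Theorem~\ref{1modp} applies and every $\F_q$-line meets $C(\F_q)$ in $1$ modulo $p$ points. In the notation of Lemma~\ref{lemma:three-key-identites} this says $t_i=0$ unless $i\equiv 1\pmod p$, in addition to $t_i=0$ for $i>d$.

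The decisive step is the inequality $p\ge d$. Since $d\ge 4$ we have $2(d-2)>d-1$, hence $4(d-2)^2>(d-1)^2$ and so $q\ge 4(d-1)^2(d-2)^2>(d-1)^4$; writing $q=p^n$ with $n\le 4$ gives $p^4\ge q>(d-1)^4$, i.e.\ $p>d-1$, that is $p\ge d$. Consequently the only integer $i$ with $1\le i\le d$ and $i\equiv 1\pmod p$ is $i=1$, because $i\ge 2$ would force $i-1\ge p\ge d$, contradicting $i\le d$. Hence every $\F_q$-line meets $C(\F_q)$ in exactly one point, so $t_1=q^2+q+1$ and $t_i=0$ for all $i\ge 2$. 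The first two identities of Lemma~\ref{lemma:three-key-identites} then collapse to $q^2+q+1=t_1=(q+1)N$; but $q^2+q+1\equiv 1\pmod{q+1}$, so $q+1\nmid q^2+q+1$ and no integer $N$ can satisfy this. This contradiction completes the proof.

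I do not expect a genuine obstacle: the argument is essentially bookkeeping that stitches together Lemma~\ref{lem:geometric-irred}, Corollary~\ref{cor:d<q+1-nontrivial}, the Hasse--Weil bound, Sz\H{o}nyi's theorem, and Lemma~\ref{lemma:three-key-identites}, with the one genuinely new observation being the elementary bound $p\ge q^{1/4}$. The only places needing care are the two numerical checks ($\sqrt q\ge 2(d-1)(d-2)$, used to land below the small-blocking-set threshold so that Theorem~\ref{1modp} is applicable, and $4(d-2)^2>(d-1)^2$, used to force $p\ge d$), both of which hold for every $d\ge 4$; and being explicit that the hypothesis $n\le 4$ is exactly what supplies $p\ge q^{1/4}$ — for larger $n$ the prime $p$ could fall below $d$, so that several line-multiplicities congruent to $1$ modulo $p$ would persist and the argument would break.
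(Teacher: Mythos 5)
Your proof is correct and follows essentially the same route as the paper's: reduce to a geometrically irreducible, nontrivially blocking curve, use Hasse--Weil to land below the $3(q+1)/2$ threshold so that Sz\H{o}nyi's theorem applies, deduce $p\ge d$ from $n\le 4$, and conclude with the divisibility contradiction $(q+1)\nmid(q^2+q+1)$. Your derivation of $p\ge d$ via $q>(d-1)^4$ is a slightly cleaner bookkeeping of the same inequality the paper uses.
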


\begin{proof}

Suppose to the contrary that $C$ is blocking. In view of Lemma~\ref{lem:geometric-irred}, we may assume that $C$ is geometrically irreducible. Using the Hasse-Weil bound and the hypothesis $\frac{q}{4}\geq (d-1)^2(d-2)^2$, we obtain:
\begin{align*}
    |C(\F_q)|\leq  q+1 + (d-1)(d-2)\sqrt{q} 
    \leq q+1 + \frac{\sqrt{q}}{2} \sqrt{q} 
    = \frac{3q}{2}+1 < \frac{3(q+1)}{2}.
\end{align*}
Since $C$ is irreducible and $d < q+1$, it follows that $C$ is nontrivially blocking by Corollary~\ref{cor:d<q+1-nontrivial} (that is, $C(\F_q)$ does not contain all the $\F_q$-points of an $\F_q$-line since that would make $C(\F_q)$ a blocking set in a trivial manner). By Theorem~\ref{1modp}, each line intersects $C(\F_q)$ in $1$ modulo $p$ points. Note that $p^4 \geq q \geq 4(d-1)^2(d-2)^2$, so $p^2\geq 2(d-1)(d-2)$, which implies that $p \geq d$ provided that $d \geq 4$. This forces $t_1=q^2+q+1$ and $t_i=0$ for $i>1$ (since $p \geq d$) by Lemma~\ref{lemma:three-key-identites}. Applying Lemma~\ref{lemma:three-key-identites} again, we obtain the equation, $$(q+1)N=\sum_{i=1}^d it_i=q^2+q+1$$ which is impossible since $(q+1) \nmid (q^2+q+1)$. \end{proof}

\begin{rem}
When $q=p$ is a prime, one can obtain a simpler proof of Theorem~\ref{thm-low-prime-power} by comparing the lower bound given by $\frac{3}{2}(p+1)$ in Blokhuis' theorem \cite{B94} with the Hasse-Weil bound.
\end{rem}

\subsection{Cubic plane curves} 

We will next show that a cubic plane curve is almost never blocking. As a reference, cubic plane curves are discussed in \cite{H79}*{Chapter 11}.

\begin{proof}[Proof of Theorem~\ref{thm:cubic}]
Assume, to the contrary, that $C(\F_q)$ is a blocking set. Let $N=\# C(\F_q)$. As before, let $t_i$ denote the number of $\F_q$-lines intersecting $C(\F_q)$ in $i$ points. By Lemma~\ref{lemma:three-key-identites},
\begin{align*} 
& t_1 + t_2 + t_3 = q^2 + q+1, \\
& t_1 + 2t_2 + 3 t_3 = N(q+1), \\
& 2 t_2 + 6 t_3 = N(N-1). 
\end{align*} 
Subtracting the first equation from the second, we get $t_2+2t_3=N(q+1)-(q^2+q+1)$. Combining this equation with the third displayed equation, we get:
\begin{align*}
t_2 &= 3(t_2+2t_3) - (2t_2+6t_3) \\
&= 3N(q+1)-3(q^2+q+1) - N(N-1).
\end{align*}
Since $t_2\geq 0$, we obtain:
$$
3N(q+1)-3(q^2+q+1) - N(N-1)\geq 0
$$
which is a quadratic inequality in $N$:
\begin{align}\label{eq:d=3:quad-inequality}
N^2 - (3q+4)N + 3(q^2+q+1) \leq 0.
\end{align}
The discriminant is given by:
$$
\Delta = (3q+4)^2 - 12(q^2+q+1) = -(3q^2-12q-4) < 0 $$
for $q\geq 5$. Indeed, $3q^2-12q-4 = 3(q-5)^2 + 18q-79 > 0$ for $q\geq 5$. Since $\Delta<0$, the quadratic function $f(N)=N^2 - (3q+4)N + 3(q^2+q+1)$ has no real roots, and therefore should always be positive. This contradicts the inequality \eqref{eq:d=3:quad-inequality}, and completes the proof. \end{proof}

\begin{ex}\label{ex:cubic-small-fields} Consider the following three plane cubic curves:
\begin{itemize}
    \item $C_2: x^3+y^3+z^3=0 $ defined over $\F_2$;
    \item $C_3: y^2 z - x^3 - x^2 z - x z^2=0 $ defined over $\F_3$;
    \item $C_4: x^3+y^3+z^3=0$ defined over $\F_4$.
\end{itemize}
One can show that $C_i$ is a smooth blocking curve for each $i=2, 3, 4$. Note $C_4$ is an example of Hermitian curve mentioned earlier in Example~\ref{ex:hermitian}. Thus, the condition $q\geq 5$ required in Theorem~\ref{thm:cubic} is both necessary and sharp.
\end{ex}

\begin{rem}
The irreducibility hypothesis on the cubic curve $C$ is necessary. Indeed, a reducible cubic curve $C$ over $\F_q$ must have a component of degree $1$, that is, an $\F_q$-line. But then $C(\F_q)$ would be a trivial blocking set.
\end{rem}

\begin{rem}
The same proof works for $(k, 3)$-arcs. More precisely, if $\mathcal{A}$ is a $(k, 3)$-arc in $\bP^2(\F_q)$ with $q\geq 5$, then $\mathcal{A}$ is not a blocking set. \end{rem}

\subsection{Quartic plane curves} 

Obtaining a more refined result for quartic plane curves is more complicated, and our proof below crucially relies on the smoothness hypothesis. 

\begin{proof}[Proof of Theorem~\ref{thm:quartic}]
Assume, to the contrary, that $C(\F_q)$ is a blocking set. By Lemma~\ref{lemma:three-key-identites}, 
\begin{align*} 
& t_1 + t_2 + t_3 + t_4 = q^2 + q+1, \\
& t_1 + 2t_2 + 3 t_3 + 4t_4 = N(q+1), \\
& 2 t_2 + 6 t_3 + 12 t_4 = N(N-1). 
\end{align*} 
We subtract the first equation from the second to get:
\begin{equation}
\label{eq:1001}
t_2 + 2t_3 + 3t_4 = N(q+1) - (q^2+q+1).
\end{equation}
Now, 
\begin{equation}
\label{eq:1002}
t_2 + 2t_3 + 3t_4 = \frac{t_2+t_3}{2}+ \frac{2 t_2 + 6 t_3 + 12 t_4}{4}\geq \frac{t_2}{2} + \frac{N(N-1)}{4}.
\end{equation}
Let $P\in C(\F_{q^2})\setminus C(\F_q)$. Consider the $\F_q$-line $L_P$ joining $P$ and its Frobenius image $\Phi(P)$. Note that $L_P=L_{\Phi(P)}$. We claim that the number of such lines is exactly 
\begin{equation}
\label{eq:1003}
\frac{\# C(\F_{q^2})- \#C(\F_{q})}{2}. 
\end{equation}
In order to prove the formula~\eqref{eq:1003}, it suffices to show that $L_P=L_{P'}$ if and only if $\{P, \Phi(P)\}=\{P', \Phi(P')\}$. Assuming $\{P, \Phi(P)\}\neq \{P', \Phi(P')\}$, the condition $L_P=L_{P'}$ would imply that then $L_P \cap C$ has $4$ points, namely $P, \Phi(P), P', \Phi(P')$, implying that $L_P \cap C(\F_q)=\emptyset$, contradicting the assumption that $C$ is blocking. This completes the proof of our claim that the number of distinct lines $L_P$ (joining $P$ and its Frobenius image $\Phi(P)$) is given by the formula~\eqref{eq:1003}.

Given $P\in C(\F_{q^2})\setminus C(\F_q)$, we know that $L_P\cap C$ has $4$ points over $\overline{\F_q}$ counted with multiplicity. Moreover, $2$ of those $4$ points are already accounted by $P$ and $\Phi(P)$, neither of which is an $\F_q$-point. Thus, either $L_P$ contributes to $t_2$ or $L$ is a tangent line to $C$ at some $\F_q$-point, that is, $L$ meets $C$ at some $\F_q$-point with multiplicity exactly $2$. Since $C$ is smooth, the number of tangent lines is at most $N=\#C(F_q)$. Therefore,
\begin{equation}
\label{eq:1004}
t_2 \geq \frac{\# C(\F_{q^2})- \#C(\F_{q})}{2} - N.
\end{equation}
Using the Hasse-Weil bound, $\#C(\F_{q^2})\geq q^2+1-6q$. Thus, equation~\eqref{eq:1004} yields that
\begin{equation}
\label{eq:1005}
t_2 \geq \frac{q^2+1-6q-N}{2} - N.
\end{equation}
Substituting the lower bound for $t_2$ from equation~\eqref{eq:1005} into the equation~\eqref{eq:1002}, we obtain 
\begin{equation}
\label{eq:1006}
t_2 + 2t_3 + 3t_4 \geq \frac{q^2+1-6q-N}{4} - \frac{N}{2} + \frac{N(N-1)}{4}.
\end{equation}
Therefore, equations~\eqref{eq:1001}~and~\eqref{eq:1006} yield that 
\begin{equation}
\label{eq:1007}
N(q+1) - (q^2+q+1) \geq \frac{q^2+1-6q-N}{4} - \frac{N}{2} + \frac{N(N-1)}{4}.
\end{equation}
Rearranging inequality~\eqref{eq:1007} into a quadratic equation in $N$, we obtain:
\begin{equation}
\label{eq:1008}
\frac{1}{4}N^2 - \left(2 + q\right) N + \frac{5}{4} q^2 - \frac{1}{2} q + \frac{5}{4}  \leq 0.
\end{equation}
The discriminant of the quadratic from~\eqref{eq:1008} is given by:
$$
\Delta = \left(2 + q\right)^2 - \left(\frac{5}{4} q^2 - \frac{1}{2} q + \frac{5}{4}\right) = -\frac{1}{4}\left(q^2-18q-11 \right).
$$
Note that $\Delta<0$ for $q\geq 19$, which contradicts the earlier inequality $\frac{1}{4}N^2 - \left(2 + q\right) N + \frac{5}{4} q^2 - \frac{1}{2} q + \frac{5}{4}  \leq 0$. Thus, the desired conclusion from Theorem~\ref{thm:quartic} is proved for $q\geq 19$. \end{proof}

\begin{rems}\label{remark:quartic-curves}
The Hermitian curve from Example~\ref{ex:hermitian} gives an example of a smooth quartic blocking curve when $q=9$. We do not know if there exist smooth or irreducible blocking plane curves of degree $4$ over $\F_q$ when $q\in\{11, 13, 16, 17\}$. The brute force method of enumerating all irreducible quartic plane curves over $\F_q$ is infeasible.
\end{rems}

\section{Connection with Frobenius nonclassical plane curves}\label{sect:Frob-non-classical}

In this section, we construct blocking plane curves that arise from Frobenius nonclassical curves. The concept of Frobenius nonclassical curves first naturally appeared in the work by St\"ohr and Voloch \cite{SV86} in their new proof of the Riemann hypothesis for curves over finite fields. Afterwards, Hefez and Voloch carried out a thorough investigation of these curves, and in particular, determined the precise number of $\F_q$-points on a nonsingular Frobenius nonclassical plane curve of degree $d$ \cite{HV90}*{Theorem 1}. The abundance of points on Frobenius nonclassical plane curves can be used to construct new complete arcs \cite{GPTU02}. Our approach will be similar to \cite{GPTU02}, except we are interested in using these curves to construct blocking sets instead of arcs.

While Frobenius nonclassical curves can live in a projective space of arbitrary dimension, we will primarily focus on the case of plane curves. We begin with a fundamental definition.

\begin{defn}
Suppose $C=\{F=0\}$ is a plane curve defined over $\F_q$. We say that $C$ is \emph{Frobenius nonclassical} if the polynomial $x^q F_x + y^q F_y + z^q F_z$ is divisible by $F$.
\end{defn}

Geometrically, a plane curve $C$ is Frobenius nonclassical if and only if for every non-singular point $P\in C$, the tangent line $T_P C$ contains $\Phi(P)$. Our goal is to prove Theorem~\ref{thm:Frob-nonclassical-blocking} on the existence of blocking plane curves of degree $d=o(q)$ where $q=p^n$ is a prime power with $n\geq 2$; see Remark~\ref{rem:Frob-nonclassical} for more details on the size of the degree. We begin with a general result.

\begin{prop}\label{prop:Frob-nonclassical-blocking}
Let $C$ be a smooth plane curve of degree $d$ defined over $\F_q$ where $p=\operatorname{char}(\F_q)\geq 3$. Suppose that $C$ is Frobenius nonclassical. Then $C$ is blocking.
\end{prop}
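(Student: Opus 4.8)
I want to show that every $\F_q$-line $L$ meets $C(\F_q)$. Suppose not: there is an $\F_q$-line $L$ with $L\cap C(\F_q)=\emptyset$. Since $C$ is smooth of degree $d$, by Bézout $L$ meets $C$ in exactly $d$ points over $\overline{\F_q}$ counted with multiplicity; none of these lies in $\F_q$. I will exploit the Frobenius-nonclassical condition, which says geometrically that for every point $P\in C$ the tangent line $T_PC$ passes through $\Phi(P)$, to derive a contradiction from the structure of how $L$ meets $C$.

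The key step is the following: pick $P\in L\cap C$ (a point over $\overline{\F_q}$). Because $L$ is defined over $\F_q$, the line $L$ is $\Phi$-invariant, so $\Phi(P)\in L$ as well, and $\Phi(P)\in C$ since $C$ is defined over $\F_q$; hence $\Phi(P)\in L\cap C$. Now the Frobenius-nonclassical property forces $\Phi(P)\in T_PC$. So the tangent line $T_PC$ contains both $P$ and $\Phi(P)$. If $P\neq \Phi(P)$, then $T_PC$ is the unique line through $P$ and $\Phi(P)$; but $L$ is also a line through $P$ and $\Phi(P)$ (both lie on $L$), so $L=T_PC$. In other words, $L$ is tangent to $C$ at $P$, meaning $L$ meets $C$ at $P$ with multiplicity at least $2$. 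The only way to avoid this conclusion is $P=\Phi(P)$, i.e. $P\in C(\F_q)$, which is excluded by assumption. Therefore $L$ is tangent to $C$ at \emph{every} point of $L\cap C$.

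To finish, I count intersection multiplicities. The intersection divisor $L\cap C$ has total degree $d$, and we have just shown every point in its support is a tangent point, hence appears with multiplicity $\geq 2$; so the support has size at most $\lfloor d/2\rfloor$. On the other hand, each such point $P$ comes in a Frobenius orbit: the orbit $\{P,\Phi(P),\ldots\}$ is entirely contained in $L\cap C$ and all its points carry equal multiplicity (Frobenius permutes the divisor), and since no point is fixed by $\Phi$, each orbit has size $\geq 2$. A cleaner way to package the contradiction: the multiset $L\cap C$ is $\Phi$-stable with no fixed point and with every point of multiplicity $\geq 2$; if $P$ has multiplicity $m_P\geq 2$ then so does $\Phi(P)\neq P$, so $d=\sum m_P \geq 2\cdot 2 = 4$ at least, but more importantly I will push this to a genuine contradiction using the precise geometry. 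The sharpest route, which I expect works, is: since $L=T_PC$ and $C$ is smooth, the intersection multiplicity of $L$ and $C$ at $P$ equals the order of contact, which for a smooth curve in characteristic $p\geq 3$ at an ordinary tangency is exactly $2$ unless $P$ is a flex; so generically $m_P=2$, and pairing $P$ with $\Phi(P)$ uses up $4$ units of the degree-$d$ budget per orbit — but crucially the tangent line at $\Phi(P)$ must also be $L$ (apply the same argument at $\Phi(P)$), and iterating along the orbit together with the contact-order bookkeeping forces $L\cap C$ to be supported on a single $\Phi$-orbit of even size with all multiplicities $2$, hence $d$ even and $L$ meeting $C$ in $d/2$ Galois-conjugate tangent points; a parity/degree inspection against the Hefez–Voloch point count then yields the contradiction.

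\textbf{Main obstacle.} The delicate point is controlling the \emph{intersection multiplicity} $I(P, L\cap C)$ when $L=T_PC$, especially in positive characteristic where higher-order contact (flexes, or worse, the curve being Frobenius-nonclassical can force unusual contact orders) can occur. I will need the hypothesis $p\geq 3$ here, and possibly a lemma identifying the order of contact of $L=T_PC$ with $C$ at $P$ in terms of the Hasse derivatives of $F$; the Frobenius-nonclassical hypothesis should in fact be exactly what pins this down, since it relates the tangent data to $\Phi$. The rest — $\Phi$-invariance of $L$, the uniqueness-of-line argument, and the final counting — is routine.
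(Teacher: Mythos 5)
Your opening reduction is correct and is a genuinely nice observation: if an $\F_q$-line $L$ missed $C(\F_q)$, then for every $P\in L\cap C$ the point $\Phi(P)$ lies on both $L$ and (by the Frobenius-nonclassical hypothesis, applicable at every point since $C$ is smooth) on $T_PC$; as $P\neq\Phi(P)$, the two lines through this pair coincide, so $L=T_PC$ and every point of $L\cap C$ is a point of tangency with $I_P(L,C)\geq 2$. But this only yields $\#(L\cap C)\leq d/2$, which is not a contradiction, and the paragraph in which you try to close the argument is not a proof. The specific claims you lean on there are unjustified and, worse, false for the curves at hand: you cannot conclude that $L\cap C$ is a single Frobenius orbit (nothing rules out several orbits), and the expectation that ``an ordinary tangency has contact exactly $2$ unless $P$ is a flex'' fails precisely for Frobenius nonclassical curves --- by Hefez--Voloch these curves are nonclassical, so the generic tangent line meets the curve with contact order $p^h\geq 3$ at the point of tangency (for the Hermitian curve the tangent at a point meets the curve \emph{only} there, with multiplicity $d=\sqrt{q}+1$). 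The appeal to ``a parity/degree inspection against the Hefez--Voloch point count'' is not carried out, and it is not clear how it would be; you yourself flag the multiplicity control as an unresolved obstacle. So the proposal has a genuine gap at its final step.

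For comparison, the paper does not argue with the line $L$ at all: it cites that a smooth Frobenius nonclassical plane curve is non-reflexive (Hefez--Voloch), deduces $d\equiv 1\pmod p$ from Pardini's theorem, and then invokes an existing criterion (from the reference [ADL22]) asserting that such curves are blocking. If you want a self-contained argument along your lines, the missing ingredient is exactly a substitute for that last citation; your tangency observation alone does not supply it.
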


\begin{proof}
Since $C$ is a smooth Frobenius nonclassical plane curve, it follows that $C$ is non-reflexive \cite{HV90}*{Proposition 1}. Therefore, $d\equiv 1\pmod{p}$ by Pardini's theorem \cite{Par86}*{Corollary 2.2}. Applying \cite{ADL22}*{Corollary 2.3}, we obtain that $C(\F_q)$ is a blocking set. 
\end{proof}

\begin{thm}\label{thm:Frob-nonclassical-blocking}
Suppose $q=p^n$ where $n\geq 2$ and $p\geq 3$. Let $1\leq n'<n$ be a positive divisor of $n$, and set $q'=p^{n'}$. There exists a smooth blocking plane curve defined over $\F_{q}$ with degree $d=\frac{q-1}{q'-1}$.
\end{thm}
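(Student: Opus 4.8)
The plan is to reduce the statement to Proposition~\ref{prop:Frob-nonclassical-blocking}. That proposition says that, when $p=\operatorname{char}(\F_q)\geq 3$, any smooth Frobenius nonclassical plane curve over $\F_q$ is blocking; so it suffices to exhibit a \emph{smooth Frobenius nonclassical} plane curve over $\F_q$ of degree $d=\frac{q-1}{q'-1}$. Writing $m=n/n'\geq 2$, so that $q=(q')^m$ and $d=1+q'+\cdots+(q')^{m-1}$, the natural candidate — which specializes to the Hermitian curve of Example~\ref{ex:hermitian} when $q$ is a square and $q'=\sqrt q$ — is the Fermat-type curve
$$
C=\{F=0\},\qquad F=x^{d}+y^{d}+z^{d}\in\F_p[x,y,z]\subseteq\F_q[x,y,z].
$$

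\emph{Smoothness.} The partial derivatives are $F_x=d\,x^{d-1}$, $F_y=d\,y^{d-1}$, $F_z=d\,z^{d-1}$, and (since $d\geq q'+1\geq 2$) these have no common zero in $\bP^2$ as soon as $d$ is invertible modulo $p$. Because $q'=p^{n'}\equiv 0\pmod p$, reducing the geometric sum gives $d\equiv 1\pmod p$, so $p\nmid d$ and $C$ is smooth; in particular $C$ is geometrically irreducible.

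\emph{Frobenius nonclassicality.} The key identity comes from $d(q'-1)=q-1$, that is, $q+d-1=q'd$. Hence
$$
x^q F_x+y^q F_y+z^q F_z=d\bigl(x^{q+d-1}+y^{q+d-1}+z^{q+d-1}\bigr)=d\bigl(x^{q'd}+y^{q'd}+z^{q'd}\bigr).
$$
Since $q'=p^{n'}$ is a power of the characteristic, the $q'$-power map is additive, so the right-hand side equals $d\bigl(x^{d}+y^{d}+z^{d}\bigr)^{q'}=d\,F^{q'}$, which is divisible by $F$. Thus $C$ is Frobenius nonclassical.

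\emph{Conclusion.} Since $p\geq 3$ and $C$ is a smooth Frobenius nonclassical curve of degree $d=\frac{q-1}{q'-1}$, Proposition~\ref{prop:Frob-nonclassical-blocking} shows $C(\F_q)$ is a blocking set; and as $d\leq\frac{q-1}{2}<q+1$, Corollary~\ref{cor:d<q+1-nontrivial} even shows that $C$ is nontrivially blocking. There is essentially no obstacle once the right curve is identified: all of the geometric content (non-reflexivity, Pardini's congruence, the Hefez–Voloch description) has already been packaged into Proposition~\ref{prop:Frob-nonclassical-blocking}, and the only step requiring a moment's care is verifying $p\nmid d$ so that the Fermat curve is genuinely smooth — which follows at once from $d\equiv 1\pmod p$.
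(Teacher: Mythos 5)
Your proposal is correct and follows essentially the same route as the paper: the same Fermat-type curve $x^d+y^d+z^d=0$ with $d=\frac{q-1}{q'-1}$, the same computation showing $x^qF_x+y^qF_y+z^qF_z=F^{q'}$ (the paper silently uses $d\equiv 1\pmod p$ to drop the constant factor that you carry explicitly), and the same appeal to Proposition~\ref{prop:Frob-nonclassical-blocking}. Your explicit check that $p\nmid d$ for smoothness is a small but welcome addition to what the paper dismisses as ``clear.''
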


\begin{proof}
Consider the curve $C\subset \bP^2$ defined by the equation,
$$
x^{\frac{q-1}{q'-1}}+y^{\frac{q-1}{q'-1}}+z^{\frac{q-1}{q'-1}} = 0.
$$
The smoothness of the curve $C$ is clear. Moreover, $C$ is Frobenius nonclassical over $\F_q$. Indeed,
\begin{align*}
x^{q} F_x + y^{q} F_y + z^{q} F_z &= x^{\frac{q-1}{q'-1}-1+q} + y^{\frac{q-1}{q'-1}-1+q}  + z^{\frac{q-1}{q'-1}-1+q}   \\
&= x^{\frac{q'(q-1)}{q'-1}}+y^{\frac{q'(q-1)}{q'-1}}+z^{\frac{q'(q-1)}{q'-1}}
= F^{q'}.
\end{align*}
In particular, $x^{q} F_x + y^{q} F_y + z^{q} F_z$ is divisible by $F$. We conclude that $C=\{F=0\}$ is a smooth Frobenius nonclassical plane curve of degree $d=\frac{q-1}{q'-1}$. The desired result follows immediately from Proposition~\ref{prop:Frob-nonclassical-blocking}.
\end{proof}

\begin{rem}\label{rem:Frob-nonclassical} As an illustration of Theorem~\ref{thm:Frob-nonclassical-blocking}, we can let $n'=1$. We obtain a smooth blocking plane curve of degree $d=\frac{q-1}{p-1}$ over $\F_q$ for every $q=p^n$ with $n\geq 2$. For example, when $q=p^3$ with $p$ an odd prime, this yields a blocking curve of degree $d=\frac{q-1}{p-1}=p^2+p+1$. Note that $d\approx q^{2/3}$, and that the exponent $2/3$ is smaller than the exponent $3/4$ in Theorem~\ref{thm:infinitely-many-3/4-smooth}. For $n\geq 4$, we obtain a smooth blocking plane curve of degree $d\approx q^{(n-1)/n}=q^{1-1/n}$ that works for every $q$.  However, Theorem~\ref{thm:infinitely-many-3/4-smooth} has the advantage that it works in the case when $q=p$ is a prime.
\end{rem}

\begin{rem}
Let $q$ be a square. The Hermitian curve $\mathcal{H}$ given by the equation $x^{\sqrt{q}+1}+y^{\sqrt{q}+1}+z^{\sqrt{q}+1}=0$ from Example~\ref{ex:hermitian} is an example of Frobenius nonclassical plane curve over $\F_q$. In fact, $d=\sqrt{q}+1$ is the smallest possible degree of a geometrically irreducible Frobenius nonclassical plane curve over $\F_q$. Moreover, every such curve of degree $d=\sqrt{q}+1$ is $\F_q$-isomorphic to $\mathcal{H}$ \cite{BH17}*{Corollary 3.2}. 
\end{rem} 

\section{Smooth curves passing through the projective triangle}\label{sect:projective-triangle}

\begin{thm}\label{thm:projective-triangle}
Let $q\equiv 3\pmod{4}$ be a prime power with $p>3$. There exists a smooth curve $C$ of degree $d=\frac{q+3}{2}$ defined over $\F_q$ such that $C(\F_q)$ is nontrivially blocking, and $C(\F_q)$ contains the projective triangle $\Delta$.
\end{thm}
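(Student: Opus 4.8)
The plan is to exhibit an explicit curve of degree $d = \frac{q+3}{2}$ whose defining polynomial vanishes on the three "sides" of the projective triangle in a controlled way, and then verify smoothness and the blocking property by hand. Recall $\Delta$ consists of the points $[0:1:-s^2]$, $[1:-s^2:0]$, $[-s^2:0:1]$ for $s \in \F_q$; since $q \equiv 3 \pmod 4$, the element $-1$ is a nonsquare, so $\{-s^2 : s \in \F_q\}$ is exactly $\{0\} \cup (\text{nonsquares})$, which has size $\frac{q+1}{2}$. The natural candidate is to build $F$ out of the quadratic character: writing $\chi$ for the quadratic character of $\F_q$, the set $\{-s^2\}$ is $\{t : \chi(t) \neq 1\}$. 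A first attempt is a polynomial like $F = g(x,y) + g(y,z) + g(z,x)$ or a symmetric combination where $g$ is a degree-$\frac{q+1}{2}$ polynomial in two variables chosen so that the locus on each coordinate line $z=0$, $x=0$, $y=0$ cuts out precisely the $\frac{q+1}{2}$ triangle points on that line. Concretely, on the line $z=0$ the triangle points are $[1:-s^2:0]$, i.e. the $\frac{q+1}{2}$ points $[1:t:0]$ with $t \in \{-s^2\}$; a degree $\frac{q+1}{2}$ factor vanishing exactly there is $\prod_{\chi(t)\neq 1}(y - t x)$, which by a standard identity equals (up to scalar and a factor of $x$ or $y$) something expressible via $x^{\frac{q-1}{2}}$, $y^{\frac{q-1}{2}}$ and the relation $u^{(q-1)/2} = \chi(u)$. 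I would look for $F$ of the shape
\begin{equation*}
F(x,y,z) = \alpha\bigl(x^{\frac{q+1}{2}}y^{*} + \cdots\bigr) + \cdots
\end{equation*}
matching these three "boundary" conditions; the $+3$ in the degree (rather than $+1$) comes from needing $F$ to have total degree one more than each boundary factor so that it can vanish on all three lines' point sets simultaneously without being divisible by $xyz$.

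The key steps, in order, are: (1) write down the explicit $F$; the cleanest route is probably $F = x^{a}(y^{b} - z^{b})\cdot(\text{stuff}) + \cdots$ or, better, to use that over $\F_q$ the polynomial $\prod_{s\in\F_q}(Y - s^2 X) $ has a closed form, so that vanishing on $\{[1:-s^2:0]\}$ is governed by a degree $\frac{q+1}{2}$ factor $P(x,y)$ with $P(x,y) = \frac{x^{(q+1)/2} + \text{(something)} y^{(q+1)/2}}{\dots}$; (2) verify $F(\Delta) = 0$ by checking the three orbits separately, using $\chi(-1) = -1$ and $\chi(s^2) = 1$; (3) prove smoothness by computing $F_x, F_y, F_z$ and showing they have no common zero on $C$ — here the cyclic symmetry of the construction under $[x:y:z]\mapsto[y:z:x]$ reduces the work, and one expects the only candidate singular points to lie on the coordinate lines or the "diagonal," handled by a Jacobian-rank computation modulo $p$ (this is where $p > 3$ and $q \equiv 3\pmod 4$ get used); (4) prove $C(\F_q)$ is blocking: take an arbitrary $\F_q$-line $L$, parametrize it, and show $F$ restricted to $L$ has an $\F_q$-root. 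Lines through two triangle points or through a triangle point are automatically met; for a general line one restricts $F$ to $L$, obtaining a one-variable expression built from quadratic-character values, and invokes Corollary~\ref{cor:47} (valid since $q \geq 47$; the finitely many cases $11 \le q \le 43$ with $q \equiv 3 \pmod 4$ and $p>3$, namely $q = 11,19,23,31,43$, can be checked by a separate direct argument or computer verification) to produce a prescribed pattern of $\chi$-values forcing a root.

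The main obstacle I anticipate is step (4) — converting "the line $L$ meets $C$" into a character-sum statement and then matching it to the precise hypotheses of Corollary~\ref{cor:47}. The difficulty is that a generic line $L = \{ux + vy + wz = 0\}$ leads, after eliminating one variable, to an equation of the form (polynomial of degree $\frac{q+3}{2}$) $= 0$, and one must show it is solvable in $\F_q$; the trick will be that this polynomial factors, via the identity $u^{(q-1)/2} = \chi(u)$, into a product/sum of at most three linear-in-$\chi$ conditions of the form $\chi(\ell_i(x)) = \epsilon_i$, at which point Corollary~\ref{cor:47} (with $k=3$, $r=2$) applies provided the three linear forms $\ell_i$ are pairwise distinct and nonzero. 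Verifying that genericity, and isolating the degenerate line positions where it fails (which must then be shown to meet $C$ for a different reason, e.g. passing through $\Delta$), is the delicate part. A secondary obstacle is ensuring genuine smoothness rather than mere geometric irreducibility; I would be prepared to fall back on a slightly different explicit $F$ if the first candidate turns out to be singular at a coordinate point, and to treat the smoothness verification as the technical heart of the argument alongside the blocking check.
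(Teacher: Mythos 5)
Your step (4) is attacking a problem that is already solved by the statement of the theorem itself: the projective triangle $\Delta$ \emph{is} a blocking set (that is its raison d'\^etre --- it is the classical nontrivial blocking set of size $3(q+1)/2$), so the moment you verify $\Delta \subseteq C(\F_q)$ in step (2), the curve is automatically blocking; no line-by-line character-sum argument and no appeal to Corollary~\ref{cor:47} is needed. Nontriviality is then immediate from Corollary~\ref{cor:d<q+1-nontrivial}, since an irreducible curve of degree $\frac{q+3}{2}<q+1$ cannot contain all $q+1$ points of a line by B\'ezout. This is exactly what the paper does, and it collapses what you flag as ``the main obstacle'' to nothing. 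Moreover, the route you sketch for step (4) would be shaky even if it were needed: restricting $F$ to a line gives an expression of the form $\sum_i \chi(\ell_i(t))\,\ell_i(t)m_i(t)=0$, which is not a conjunction of prescribed character values $\chi(\ell_i(t))=\epsilon_i$, so Corollary~\ref{cor:47} does not apply in the way you describe.

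The genuine content of the theorem is concentrated in the two steps you leave unspecified: producing the explicit $F$ and proving smoothness. The paper takes the cyclically symmetric polynomial
\begin{equation*}
F = x^{(q+1)/2}(y+z) + y^{(q+1)/2}(z+x) + z^{(q+1)/2}(x+y),
\end{equation*}
whose vanishing on $\Delta$ follows from $(-s^2)^{(q-1)/2}=-1$ (using $q\equiv 3\pmod 4$). The smoothness proof is a nontrivial multi-case analysis: one writes the three equations $F_x=F_y=F_z=0$ in \emph{two different} matrix forms $M\mathbf{v}=\mathbf{0}$ and $N\mathbf{w}=\mathbf{0}$ (with $\mathbf{v}=(x^m,y^m,z^m)^{T}$ and $\mathbf{w}=(x,y,z)^{T}$, $m=(q-1)/2$), extracts the two determinant relations, and combines them with the defining equation; the hypotheses $p>3$ and $q\equiv 3\pmod 4$ enter through factors of $3$, $6$, $9$ and through $(A+B+C)^q=A^q+B^q+C^q$. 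Your proposal correctly anticipates that smoothness is delicate and that the cyclic symmetry helps, but as written it supplies neither the curve nor the computation, so the argument is not yet a proof.
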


\begin{proof}
Consider the plane curve $C$ defined by the equation,
$$
xy(x^{(q-1)/2} + y^{(q-1)/2}) + yz(y^{(q-1)/2} + z^{(q-1)/2}) + zx(z^{(q-1)/2} + x^{(q-1)/2}) = 0.
$$
Note that $C$ passes through $[1:0:0]$, $[0:1:0]$, and $[0:0:1]$. Moreover, for any $s\in \F_q^{\ast}$, we have $(-s^2)^{(q-1)/2} = -1$ since $(q-1)/2$ is odd by hypothesis. As a result, the curve $C$ passes through each point of the projective triangle 
$$
\Delta = \{ [0:1:-s^2], [1:-s^2:0], [-s^2:0:1] \ | \ s\in \F_q  \}
$$
and thus $C$ is nontrivially blocking by Corollary~\ref{cor:d<q+1-nontrivial} assuming that $C$ is irreducible. It remains to show that $C$ is smooth. Note that smooth plane curves are irreducible.

The defining polynomial for $C$ can be rewritten as:
$$
F = x^{(q+1)/2}(y+z) + y^{(q+1)/2}(z+x) + z^{(q+1)/2}(x+y).
$$
Assume, to the contrary, that $C$ is singular at a point $P=[x:y:z]$. Then the three partial derivatives $F_x, F_y$ and $F_z$ must vanish at $P$. Writing down $F_x=F_y=F_z=0$ and multiplying both sides of each equation by $2$ leads to the following:
\begin{align*}
    x^{(q-1)/2}(y+z) + 2y^{(q+1)/2} + 2z^{(q+1)/2} = 0, \\
    2x^{(q+1)/2} + y^{(q-1)/2}(z+x) + 2z^{(q+1)/2} = 0, \\
    2x^{(q+1)/2} + 2y^{(q+1)/2} + z^{(q-1)/2}(x+y) = 0. 
\end{align*}
Let $m=(q-1)/2$ for simplicity. We can rewrite the above system of equations in the matrix form $M\mathbf{v}=\mathbf{0}$ where
$$
M=\begin{pmatrix} y+z & 2y & 2z  \\ 
2x & z+x & 2z \\
2x & 2y & x+y \end{pmatrix} \ \ \text{and} \ \ \mathbf{v} = 
\begin{pmatrix} 
x^{m} \\ y^{m} \\ z^{m}
\end{pmatrix}.
$$
Since $\mathbf{v}\neq\mathbf{0}$ by assumption, it follows that $\det(M)=0$. After expanding the determinant and dividing both sides by $3$ (which is permissible as $p>3$), we obtain the relation:
\begin{equation}\label{first-det-equation}
    x^2 y + x y^2 + y^2 z + y z^2 + z^2 x + z x^2 = 6xyz.
\end{equation}
The system of equations on the partial derivatives above can also be written as $N\mathbf{w}=0$ where
$$
N=\begin{pmatrix} 0 & x^m+2y^m & x^m+2z^m  \\ 
y^m+2x^m & 0 & y^m+2z^m  \\
z^m + 2x^m & z^m+2y^m & 0\end{pmatrix} \ \ \text{and} \ \ \mathbf{w} = 
\begin{pmatrix} 
x \\ y \\ z
\end{pmatrix}.
$$
Since $\mathbf{w}\neq\mathbf{0}$ by assumption, it follows that $\det(N)=0$. After expanding the determinant and factoring, we obtain the relation:
\begin{equation}\label{second-det-equation}
 6(x^m+y^m+z^m)(x^{m} y^{m} + y^{m}z^{m} + z^{m} x^{m}) = 0
\end{equation}
Since $p>3$, we can conclude that either $x^m+y^m+z^m=0$ or $x^{m} y^{m} + y^{m}z^{m} + z^{m} x^{m}=0$.

\textbf{Case 1.} $x^m+y^m+z^m=0$.

In this case, we can substitute $x^m=-y^m-z^m$ into the equation  $x^m(y+z)+2y^{m+1}+2z^{m+1}=0$ given by the vanishing of the partial derivative $F_x$. We obtain,
\begin{equation}
\label{eq:1009}
-(y^m+z^m)(y+z)+2y^{m+1}+2z^{m+1} = (y-z)(y^m-z^m) = 0.
\end{equation}
Equation~\eqref{eq:1009} yields that $y^m=z^m$. By symmetry, we can apply the same argument to the equations given by $F_y=0$ and $F_z=0$ to get $z^m=x^m$, and also, $x^m=y^m$. We can then conclude that $x^m=y^m=z^m$, and thus $x^m+y^m+z^m=0$ implies that $3x^m=0$. Since $p>3$, we must have  $x=y=z=0$, which is a contradiction.

\textbf{Case 2.} $x^m y^m +y^m z^m +z^m x^m=0$.

We split the analysis into two sub-cases.

\textbf{Case 2.1.} $xyz=0$.

Without loss of generality, suppose $z=0$. Then the equation $F_z=0$ becomes:
\begin{equation}\label{eq:partial-derivative-z}
    2x^{m+1}  + 2y^{m+1} = 0.
\end{equation}
Now, combining $z=0$ and $x^m y^m +y^m z^m +z^m x^m=0$, we must have $x^m y^m=0$, that is, either $x=0$ or $y=0$. However, both possibilities imply $x=y=0$ using \eqref{eq:partial-derivative-z}, which is a contradiction, since at least one of $x, y, z$ must be non-zero.

\textbf{Case 2.2.} $xyz\neq 0$.

We introduce new variables $A = 1/x, B = 1/y$ and $C = 1/z$. The relation $x^m y^m +y^m z^m +z^m x^m=0$ implies 
\begin{equation}\label{ABC-relation-1}
A^m + B^m + C^m = 0.
\end{equation}
The defining equation for the curve can be expressed as,
$$
\frac{B+C}{A^{m+1}BC} + \frac{C+A}{B^{m+1}CA} + \frac{A+B}{C^{m+1} AB} = 0.
$$
After multiplying both sides by $ABC$, we get
$$
\frac{B+C}{A^{m}} + \frac{C+A}{B^{m}} + \frac{A+B}{C^{m}} = 0.
$$
The last equation can be rewritten as,
$$
C\cdot \left( \frac{1}{A^m} + \frac{1}{B^m} \right) + B\cdot \left(\frac{1}{C^m} + \frac{1}{A^m}\right) + A\cdot \left(\frac{1}{B^m} + \frac{1}{C^m}\right) = 0.
$$
Combining the last relation with \eqref{ABC-relation-1}, 
\begin{align*}
0 &= C\cdot \left( \frac{A^m + B^m}{(AB)^m} \right) + B\cdot \left(\frac{C^m +A^m}{(CA)^m} \right) + A\cdot
\left(\frac{B^m+C^m}{(BC)^m}\right)  \\
&= C\cdot \frac{(-C^m)}{(AB)^m} + B\cdot \frac{(-B^m)}{(CA)^m} + A\cdot
\frac{(-A^m)}{(BC)^m}.
\end{align*}
Next, we multiply the last equation by $(ABC)^m$ to arrive to,
$$
0 = C^{2m+1} + B^{2m+1} + A^{2m+1}.
$$
Recalling that $m=\frac{q-1}{2}$, we have $2m+1=q$. Thus, 
$$
0 = C^q + B^q + A^q = (A+B+C)^q
$$
as we are working in characteristic $p$. We conclude that $A+B+C=0$, or equivalently, $xy+yz+zx=0$. In particular, we must have $(x+y+z)(xy+yz+zx)=0$, that is,
\begin{align}\label{new-cubic-relation}
x^2 y + x y^2 + y^2 z + y z^2 + z^2 x + z x^2 + 3xyz = 0.
\end{align}
Finally, combining \eqref{first-det-equation} and \eqref{new-cubic-relation}, we conclude that
$$
6xyz+3xyz = 0 \ \Rightarrow \ 9xyz = 0 \ \Rightarrow \ xyz=0
$$
as we are assuming $p>3$. This contradicts the assumption that $xyz\neq 0$.

We deduce that the curve $C$ is smooth, and the proof is complete.
\end{proof}

\begin{rem}\label{rem:bezout} 
Note that $(q+3)/2$ is a lower bound on the degree of an irreducible curve that passes through all the points of the projective triangle. Indeed, the intersection between such a curve and the line $x=0$ includes the points $\{[0:1:-s^2] \ | \ s \in \F_q^{\ast}\} \cup \{[0:1:0],[0:0:1]\}$. Since there are at least $(q-1)/2+2=(q+3)/2$ intersection points, the degree must be at least $(q+3)/2$ by B\'ezout's theorem. We have shown that this is tight when $q \equiv 3 \pmod 4$.  

Note that $(p+3)/2$ is also a lower bound on the degree of an irreducible curve that passes through a blocking set of R\'edei type over $\F_p$. A blocking set $B$ is of \emph{R\'edei type} if there is a line $L$ such that $|B \cap L|=|B|-q$. For example, the projective triangle is of R\'edei type. Blokhuis \cite{B94} showed each nontrivial blocking set in $\bP^2(\F_p)$ has size at least $3(p+1)/2$, and so a nontrivial blocking set of R\'edei type in $\bP^2(\F_p)$ contains a line $L$ with at least $3(p+1)/2-p=(p+3)/2$ points.  In fact, G\'acs \cite{G03}  showed that if a blocking set of R\'edei type in $\bP^2(\F_p)$ is not projectively equivalent to the projective triangle, then it has size at least $p+2(p-1)/3+1$. It follows that any irreducible plane curve passing through a R\'edei type blocking set other than the projective triangle (and its image under a projective transformation) must have degree at least $2p/3$.
\end{rem}

\begin{rem}
Note that our proof relies on the fact that $-1$ is a non-square in $\F_q$ provided that $q \equiv 3 \pmod 4$. When $q \equiv 1 \pmod 4$, $-1$ is a square in $\F_q$, and one can instead consider the curve given by,
$$xy(x^{(q-1)/2} - y^{(q-1)/2}) + yz(y^{(q-1)/2} - z^{(q-1)/2}) + zx(z^{(q-1)/2} - x^{(q-1)/2})=0$$
which does pass through the points of the projective triangle, and hence is a blocking curve. However, the curve above is reducible (in fact, contains the lines $x=y$, $y=z$ and $z=x$).  Nonetheless, we believe that there is a smooth degree $\frac{q+3}{2}$ curve that passes through the points of the projective triangle when $q\equiv 1\pmod{4}$. In fact, for every homogeneous polynomial $g(x,y,z)$ of degree $\frac{q-3}{2}$, any plane curve defined by
$$
xy(x^{(q-1)/2} - y^{(q-1)/2}) + yz(y^{(q-1)/2} - z^{(q-1)/2}) + zx(z^{(q-1)/2} - x^{(q-1)/2}) + xyz \cdot g(x,y,z)=0
$$
passes through the projective triangle, and thus, is a blocking curve. The subtle point is to find a suitable $g$ to ensure that the curve is smooth. We believe that already taking $g(x,y,z)=x^{(q-3)/2}$ would produce a smooth curve, but we were unable to prove this for all $q\equiv 1\pmod{4}$.
\end{rem}

\section{Constructions of irreducible and smooth blocking curves}\label{sect:constructions-I}

The fact that the projective triangle is a blocking set relies on nice properties of squares and non-squares. Inspired by this observation, we provide a systematic approach to constructing blocking curves using power residues in the following proposition.  

\begin{prop} \label{prop: general-construction}
Let $r\geq 2$ be a positive integer. Let $q\equiv 1\pmod{r}$ be a prime power such that $q>r^4$. Let $f,g,h \in \F_q[x,y,z]$ be homogeneous polynomials with the same degree, such that $f=-g-h$, $f(1,0,0)=0, g(0,1,0)=0$, and $h(0,0,1)=0$.  Consider the curve $C$ defined by
$$
f(x,y,z)x^m+g(x,y,z)y^m+h(x,y,z)z^m=0,
$$ 
where $m=(q-1)/r$. Then $C$ is blocking.  
\end{prop}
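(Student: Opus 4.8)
The plan is to check directly that every $\F_q$-line $L$ meets $C(\F_q)$. First observe that the three coordinate vertices lie on $C$: since $m=(q-1)/r\geq 1$ we have $0^m=0$, so $F(1,0,0)=f(1,0,0)=0$ by hypothesis, and similarly $[0:1:0],[0:0:1]\in C(\F_q)$ using $g(0,1,0)=0$ and $h(0,0,1)=0$. Hence any $\F_q$-line through one of $[1:0:0]$, $[0:1:0]$, $[0:0:1]$ already meets $C(\F_q)$, and it remains to handle a line $L\colon ax+by+cz=0$ avoiding all three of these points — that is, with $a,b,c\in\F_q^*$.

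For such a line the idea is to produce a point $[x:y:z]$ of $L$ whose three coordinates are all (nonzero) $r$-th powers. For any such point one has $x^m=y^m=z^m=1$, because $t=u^r$ implies $t^m=t^{(q-1)/r}=u^{q-1}=1$; consequently
$$F(x,y,z)=f(x,y,z)x^m+g(x,y,z)y^m+h(x,y,z)z^m=f(x,y,z)+g(x,y,z)+h(x,y,z)=0$$
since $f+g+h=0$ identically. Normalizing to first coordinate $1$, we must therefore find $y,z\in\F_q^*$, both $r$-th powers, with $[1:y:z]\in L$, i.e. with $by+cz=-a$, equivalently $(b/a)y+(c/a)z=-1$. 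If $\chi$ denotes a multiplicative character of $\F_q$ of order $r$, the condition that $y$ (resp.\ $z$) be an $r$-th power is exactly $\chi(y)=1$ (resp.\ $\chi(z)=1$). Since $b/a,c/a\in\F_q^*$ and $q>r^4$, Corollary~\ref{cor:by+cz}, applied with $\omega_1=\omega_2=1$, produces such $y$ and $z$, and then $[1:y:z]$ lies on both $L$ and $C$. As every $\F_q$-line either passes through a coordinate vertex or has the above form, this shows $C$ is blocking.

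There is no real obstacle here beyond finding the right point to aim for. The hypothesis $f+g+h=0$ is only exploitable once the monomials $x^m$, $y^m$, $z^m$ share a common value, which is why we restrict to points whose coordinates are simultaneously $r$-th powers; this turns the geometric claim into precisely the character-sum existence statement packaged in Corollary~\ref{cor:by+cz}. The only remaining bookkeeping is that the two cases are exhaustive, that $0^m=0$ and $t^{(q-1)/r}=1$ for $r$-th powers $t$, and that $a\neq 0$ ensures $[1:y:z]$ is a bona fide point of $\bP^2(\F_q)$ lying on $L$.
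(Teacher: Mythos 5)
Your proof is correct and follows essentially the same route as the paper's: both identify the point set consisting of the three coordinate vertices together with the points $[1:y:z]$ having $y,z$ nonzero $r$-th powers, and both invoke Corollary~\ref{cor:by+cz} (with $\omega_1=\omega_2=1$) to block the lines with $abc\neq 0$. The only cosmetic difference is that you rescale the line equation by $a$ rather than normalizing $a=1$ at the outset.
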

\begin{proof}
By hypothesis, it is clear that $C$ passes through the points $[0:0:1], [0:1:0], [1:0:0]$. Moreover, since $f=-g-h$, the curve $C$ passes through $[1:y:z]$ whenever $y$ and $z$ are $r$-th powers in $\F_q^*$. Thus, it suffices to show that
$$
B=\{[0:0:1], [0:1:0], [1:0:0]\} \cup \{[1:y:z] \ | \text{ $y$ and $z$ are $r$-th powers in $\F_q^*$} \}
$$
forms a blocking set.

Consider an $\F_q$-line $L\colon ax+by+cz=0$. If $abc=0$, then the line $L$ contains a point in $\{[0:0:1], [0:1:0], [1:0:0]\}$. Next assume that $abc \neq 0$. We may further assume that $a=1$. To show $B \cap L\neq \emptyset$, it suffices to show that $by+cz=-1$ holds for some $y$ and $z$ that are $r$-th powers in $\F_q^*$. This is guaranteed by Corollary~\ref{cor:by+cz}.
\end{proof}

Nonetheless, it is not clear which curves in the family given by the previous proposition are smooth or irreducible. By specializing the choice of $f, g, h$, we construct a geometrically irreducible blocking curve in every degree starting from $d=\frac{q-1}{r}+1$.

\begin{thm}\label{thm:irreducible-curves}
Let $r\geq 2$ be a positive integer. Let $q\equiv 1\pmod{r}$ be a prime power such that $q>r^4$. Let $k\in\F_q^*$ such that $-k$ is not an $r$-th power in $\F_q^*$. Let $m=\frac{q-1}{r}$. Then the curve $C$ defined by
$$
-(ky^{\ell}+z^{\ell})x^m + z^{\ell} y^m + k y^{\ell} z^m =0
$$
is geometrically irreducible and nontrivially blocking for each $\ell\geq 1$.
\end{thm}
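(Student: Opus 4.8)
The blocking property comes for free: I would apply Proposition~\ref{prop: general-construction} with $f=-(ky^{\ell}+z^{\ell})$, $g=z^{\ell}$, $h=ky^{\ell}$. These are homogeneous of the common degree $\ell$, satisfy $f=-g-h$, and vanish at $[1:0:0]$, $[0:1:0]$, $[0:0:1]$ respectively, so since $q\equiv 1\pmod r$ and $q>r^4$ are assumed, Proposition~\ref{prop: general-construction} yields that $C$ is blocking. The real content of the theorem is therefore the two remaining assertions: that $C$ is geometrically irreducible, and that $C(\F_q)$ is not a trivial blocking set.

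For geometric irreducibility the plan is to write the defining polynomial as $F=\tilde f(y,z)\,x^m+\tilde g(y,z)$ with $\tilde f=-(ky^{\ell}+z^{\ell})$ and $\tilde g=y^m z^{\ell}+ky^{\ell}z^m$, and to apply Lemma~\ref{lem:eisenstein-criterion} over $\overline{\F_q}$ (with exponent $m$). Two hypotheses must be checked. First, $\gcd(\tilde f,\tilde g)=1$: a common irreducible factor would be a linear form $z-\zeta y$ with $\zeta\neq 0$ (neither $y$ nor $z$ divides $\tilde f$); dividing $\tilde f$ forces $\zeta^{\ell}=-k$, and dividing $\tilde g$ forces $\zeta^{\ell}+k\zeta^m=0$, hence $\zeta^m=1$. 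Since $p\nmid m$ and $m\mid q-1$, the $m$-th roots of unity all lie in $\F_q^{*}$ and form exactly the subgroup $(\F_q^{*})^r$, so $-k=\zeta^{\ell}\in(\F_q^{*})^r$, contradicting the hypothesis on $k$. Second, one of $\tilde f,\tilde g$ has a non-repeated irreducible factor: if $p\nmid\ell$ then $\tilde f$ has $\ell$ distinct linear factors over $\overline{\F_q}$; if $p\mid\ell$ then $p\nmid m$ forces $\ell\neq m$ and $p\nmid|\ell-m|$, and $\tilde g$ has a separable factor of the form $y^{|\ell-m|}+k\,z^{|\ell-m|}$ or $z^{|\ell-m|}+k\,y^{|\ell-m|}$, coprime to $yz$. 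Lemma~\ref{lem:eisenstein-criterion} then gives that $F$ is irreducible over $\overline{\F_q}$. I expect this $\gcd$ computation — where the hypothesis that $-k$ is not an $r$-th power is precisely what is needed — to be the main obstacle in this half.

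For nontrivial blocking, I would first note that since $C$ is geometrically irreducible of degree $m+\ell\geq 2$, no $\F_q$-line is a component of $C$, so $F$ restricts to a nonzero binary form on every $\F_q$-line $L\cong\bP^1$; the goal is to show this form cannot vanish at all $q+1$ points of $L$. (One cannot merely quote Corollary~\ref{cor:d<q+1-nontrivial}, since $m+\ell$ may exceed $q$.) For the three coordinate lines this is immediate: $[1:1:0]$, $[1:0:1]$, $[0:1:1]$ lie off $C$ because $F$ evaluates there to $-k$, $-1$, $1+k$, all nonzero (using $k\neq 0$, and $k\neq -1$ since otherwise $-k=1$ is an $r$-th power). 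For a non-coordinate line $L$, solve its equation for one coordinate and substitute to get $G:=F|_L$, a nonzero binary form of degree $m+\ell$; inspecting the top-degree coefficients shows $G$ has a nonzero coefficient on a pure power $y^{m+\ell}$ or $z^{m+\ell}$ in the coordinates used, in every case except when $L$ meets $\{[1:0:0],[0:1:0],[0:0:1]\}$ only at $[1:0:0]$, in which case $G$ collapses to a nonzero scalar multiple of the monomial $x^m y^{\ell}$ (nonvanishing of the scalar again uses that $-k$ is not an $r$-th power). But a binary form vanishing at all of $\bP^1(\F_q)$ is divisible by $uv^q-u^q v$, hence by $uv$, so it has no pure-power monomial; and a nonzero monomial vanishes at only two points of $\bP^1$, fewer than $q+1$. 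In either situation $L(\F_q)\not\subseteq C(\F_q)$, so $C$ is nontrivially blocking. The only place requiring care here is the high-degree regime $m+\ell\geq q+1$, which is exactly what forces the monomial-coefficient argument rather than a bare Bézout bound.
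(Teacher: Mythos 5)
Your proposal is correct, and two of its three components (blocking via Proposition~\ref{prop: general-construction}, geometric irreducibility via the $\gcd$ computation plus Lemma~\ref{lem:eisenstein-criterion}) match the paper's proof essentially step for step --- the paper phrases the $\gcd$ obstruction as deriving $(-k)^m=1$ from the simultaneous vanishing of $ky^\ell+z^\ell$ and $y^{m-\ell}+kz^{m-\ell}$, which is the same computation as your $\zeta^m=1$ argument, and it uses the identical dichotomy $p\nmid\ell$ or $p\nmid(m-\ell)$ for the non-repeated factor. Where you genuinely diverge is the nontriviality step. The paper observes that $C$ misses every point $[1:y:z]$ with $y$ an $r$-th power and $z$ a non-$r$-th power, and then runs a case analysis over lines, invoking Corollary~\ref{cor:by+cz} a second time to produce such a point on each line with $abc\neq 0$; your route instead restricts $F$ to each line and shows the resulting nonzero binary form either carries a pure-power monomial (so it is not divisible by $uv^{q}-u^{q}v$ and cannot vanish on all of $\bP^1(\F_q)$) or degenerates to a monomial $\alpha x^m y^\ell$ on the lines $by+cz=0$, where $\alpha$ and the $y^{m+\ell}$-coefficient $\beta$ cannot both vanish precisely because $-k$ is not an $r$-th power. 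I checked the leading coefficients you appeal to (for $a\neq 0$ they are $-k(-b/a)^m$ and $-(-c/a)^m$, so at least one is nonzero off the line $x=0$, and the three coordinate lines are dispatched by the explicit points $[1:1:0]$, $[1:0:1]$, $[0:1:1]$ with values $-k$, $-1$, $1+k$); the argument goes through. Your version buys a more self-contained nontriviality proof that uses no character sums beyond the single application inside Proposition~\ref{prop: general-construction}, at the cost of a slightly fussier bookkeeping of which pure-power coefficients survive; the paper's version buys uniformity, reusing the same corollary that powers the blocking statement.
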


\begin{proof}
We first show that $\gcd(ky^{\ell}+z^{\ell}, z^{\ell} y^m + k y^{\ell} z^m)=1$. It suffices to show that there are no $y,z \neq 0$ in $\overline{\F_q}$ such that $ky^{\ell}+z^{\ell}=0$ and $y^{m-\ell}+kz^{m-\ell}=0$ hold at the same time. Otherwise, we would have,
$$
z^{\ell}=-ky^{\ell}, z^{m-\ell}=(-k)^{-1}y^{m-\ell}.
$$
This implies that 
$$
(-k)^{m-\ell} y^{\ell(m-\ell)}=z^{\ell(m-\ell)}=(-k)^{-\ell} y^{\ell(m-\ell)},
$$
thus $(-k)^m=1$, which implies that $-k$ is an $r$-th power in $\F_q^*$, violating the assumption.

Next, we show that $C$ is geometrically irreducible. Since $p\nmid m=(q-1)/r$, we know that either $p\nmid \ell$ or $p\nmid (m-\ell)$. In either case, we can apply Lemma~\ref{lem:eisenstein-criterion} to the polynomial 
$$
-(ky^\ell+z^\ell)x^m + (z^\ell y^m + ky^\ell z^m)
$$
seen over the field $\overline{\mathbb{F}_q}$. Indeed, $z^\ell y^m + ky^\ell z^m$ has a non-repeated factor if $p\nmid (m-\ell)$ and $ky^\ell+z^\ell$ has a non-repeated factor if $p\nmid \ell$. 

By Proposition~\ref{prop: general-construction}, $C$ is  blocking. Thus, it remains to show that $C$ is nontrivially blocking; note that since $C$ has potentially large degree, Corollary~\ref{cor:d<q+1-nontrivial} cannot be applied and it may still contain all the $\F_q$-points of some $\F_q$-line despite being geometrically irreducible. However, observe that the curve $C$ does not pass through $[1:y:z]$ where $y$ is an $r$-th power and $z$ is not an $r$-th power, for otherwise $-(ky^\ell+z^\ell)+z^\ell+ky^\ell z^m=0$, which implies that $y=0$ since $z^m \neq 1$, a contradiction. 

Now, suppose that the equation of a line $L$ is given by $ax+by+cz=0$ where $a, b, c\in\F_q$; we need to show there is an $\F_q$-point on $L$ which is not on $C$.

If $abc\neq 0$, we can assume $a=1$. Then any point $[1:y:z]$ which lies on $L$ satisfies $by+cz=-1$. We can apply Corollary~\ref{cor:by+cz} to find some $r$-th power $y$ and some non-$r$-th power $z$ which satisfies this relation; but such a point does not lie on the curve $C$ by the above discussion. It remains to analyze the case $abc=0$. 

When $a=0$, $b=0$ and $c\neq 0$, then the line $L$ is given by $z=0$. In this case, $L\cap C$ only consists of two points $[1:0:0]$ and $[0:1:0]$. Similarly, when $a=0$, $c=0$ and $b\neq 0$, then $L=\{y=0\}$ and $L\cap C$ only consists of $[1:0:0]$ and $[0:0:1]$.

When $a=0$ and $bc\neq 0$, the equation of $L$ is given by $by+cz=0$, we get $y=(-c/b)z$. There are two cases to consider. 

\textbf{Case 1.} $-c/b$ is not a $r$-th power in $\F_q$. \\
Then $L$ contains the point $[1:1:-c/b]$. Since $1$ is an $r$-th power and $-c/b$ is a non-$r$-th power in $\F_q$, such a point is not contained in $C$. 

\textbf{Case 2.}  $-c/b$ is an $r$-th power in $\F_q$. \\
Then $L$ contains a point $[1:y_0:z_0]$ with $y_0^m=z_0^m=\omega$ where $\omega\notin\{0,1\}$. We claim that this $\F_q$-point is not on $C$. Otherwise, 
$$
-(ky_0^\ell + z_0^\ell)+z_0^\ell y_0^m+ky_0^\ell z_0^m =(\omega-1)(ky_0^\ell + z_0^\ell)= 0.
$$
This forces $k y_0^\ell+z_0^\ell=0$, and also $z_0^\ell y_0^m+ky_0^\ell z_0^m=0$. This contradicts the earlier statement that $\gcd(z^\ell + ky^\ell, z^\ell y^m+ky^\ell z^m)=1$.

When $a \neq 0$ and $b=c=0$, then line $L$ is given by $x=0$. Note that the point $[0:1:1]$ is on the line $x=0$ but not on the curve $C$, since $-k$ is a non-$r$-th power implies that $k \neq -1$.
 
Finally we consider the case  $a\neq 0$, and exactly one of $b=0$ or $c=0$ holds. Observe that the curve does not pass through $[1:y:0]$ for $y \neq 0$ and $[1:0:z]$ for $z \neq 0$. If $b=0$, then $L$ contains $[1:0:z_0]$ for some nonzero $z_0$ and if $c=0$ then $L$ contains $[1:y_0:0]$ for some nonzero $y_0$. 
\end{proof}

One can check that when $\ell \geq 2$, the curve in Theorem~\ref{thm:irreducible-curves} is singular at the points $[1:0:0],[0:1:0],[0:0:1]$. Next, we will show that when $\ell=1$, the above construction in fact gives a smooth blocking curve.

\begin{thm}\label{thm:smoothcurve}
Let $r\geq 2$ be a positive integer. Let $q\equiv 1\pmod{r}$ be a prime power such that $q>r^4$ and $p \nmid (r^2-1)$. Let $k\in\F_q^*$ such that $-k$ is not an $r$-th power in $\F_q^*$, and set $m=\frac{q-1}{r}$. Then the curve $C$ defined by
$$
F(x,y,z)=-(ky+z)x^{m}+ z y^m + ky z^m = 0
$$
is smooth and nontrivially blocking.
\end{thm}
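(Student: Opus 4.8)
The plan is to get the ``nontrivially blocking'' part for free and then carry out a direct verification of smoothness. Indeed, the curve $C$ is precisely the $\ell=1$ member of the family in Theorem~\ref{thm:irreducible-curves}, and the hypotheses of that theorem ($r\geq 2$, $q\equiv 1\pmod r$, $q>r^4$, and $-k$ not an $r$-th power in $\F_q^*$) are all included among the hypotheses here; the extra assumption $p\nmid(r^2-1)$ is what we will use in the smoothness verification. So Theorem~\ref{thm:irreducible-curves} already tells us that $C$ is geometrically irreducible and nontrivially blocking, and it remains only to show that $C$ has no singular point in $\bP^2(\overline{\F_q})$; equivalently, that $F=F_x=F_y=F_z=0$ has no common solution.

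Before the case analysis I would record two facts. First, since $m=(q-1)/r$ divides $q-1$ and $p\nmid q-1$, we have $p\nmid m$, so $m\neq 0$ in $\F_q$; also $m\geq 2$ because $q>r^4$. Second, the element $\zeta\colonequals(-k)^m=(-k)^{(q-1)/r}$ is an $r$-th root of unity (as $(-k)^{q-1}=1$) which is \emph{not} equal to $1$, precisely because $-k$ is not an $r$-th power in $\F_q^*$. The partial derivatives are
$$
F_x=-m\,x^{m-1}(ky+z),\qquad F_y=-kx^m+mzy^{m-1}+kz^m,\qquad F_z=-x^m+y^m+kmyz^{m-1}.
$$
Since $m\neq 0$, the equation $F_x=0$ forces $x=0$ or $ky+z=0$, which gives the two main cases.

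In the case $ky+z=0$, substituting $z=-ky$ kills the first term of $F$ and yields $F=ky^{m+1}\big((-k)^m-1\big)=ky^{m+1}(\zeta-1)$; since $k\neq 0$ and $\zeta\neq 1$, we must have $y=0$, hence $z=0$, so the point is $[1:0:0]$ --- but $F_z(1,0,0)=-1\neq 0$, a contradiction. In the case $x=0$, one computes $F|_{x=0}=yz(y^{m-1}+kz^{m-1})$, $F_y|_{x=0}=z(my^{m-1}+kz^{m-1})$, $F_z|_{x=0}=y(y^{m-1}+kmz^{m-1})$. If $y=0$ the point is $[0:0:1]$ with $F_y=k\neq 0$; if $z=0$ the point is $[0:1:0]$ with $F_z=1\neq 0$; so we may assume $yz\neq 0$. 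Then $F=0$ gives $y^{m-1}=-kz^{m-1}$ and $F_y=0$ gives $my^{m-1}=-kz^{m-1}$, whence $(m-1)y^{m-1}=0$ and therefore $p\mid m-1$. But $r(m-1)=q-1-r\equiv -(r+1)\pmod p$ and $p\nmid r$, so $p\mid m-1$ would force $p\mid r+1$, hence $p\mid r^2-1$, contradicting the hypothesis. Thus $C$ is smooth, and being smooth it is also geometrically irreducible, which completes the proof.

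I do not expect a serious obstacle: the argument is an elementary, if somewhat bookkeeping-heavy, computation. The two places where genuine care is needed are (i) recognizing that the substitution $z=-ky$ makes $F$ proportional to $(-k)^m-1$, so that the hypothesis ``$-k$ is not an $r$-th power'' enters exactly here, and (ii) remembering to treat the three coordinate points $[1:0:0]$, $[0:1:0]$, $[0:0:1]$ --- which do lie on $C$ --- as separate base cases, since the generic argument degenerates there. The arithmetic implication $p\mid m-1\Rightarrow p\mid r+1$, which is where $p\nmid r^2-1$ is used, is then the only remaining input.
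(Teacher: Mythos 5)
Your proof is correct, and the overall architecture matches the paper's: the nontrivially blocking part is obtained by citing Theorem~\ref{thm:irreducible-curves} with $\ell=1$, and smoothness is checked by a direct analysis of the vanishing locus of $F, F_x, F_y, F_z$. Your handling of the case $ky+z=0$ is essentially identical to the paper's (substitute $z=-ky$ into $F=0$ to force $(-k)^m=1$ unless $y=z=0$, then check $[1:0:0]$ directly), and you correctly treat the three coordinate points as separate base cases. Where you genuinely diverge is the case $x=0$. The paper packages the three equations $F_x=F_y=F_z=0$ (after multiplying by $r$ and using $rm\equiv-1\pmod p$) as a matrix identity $M\mathbf{v}=\mathbf{0}$ with $\mathbf{v}=(x^{m-1},y^{m-1},z^{m-1})^{T}$, computes $\det M=\pm(r^2-1)k(ky+z)yz$, and invokes $p\nmid(r^2-1)$ to force $(ky+z)yz=0$, which funnels the $x=0$ case back into the $ky+z=0$ analysis. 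You instead compare $F=0$ with $F_y=0$ on the locus $x=0$, $yz\neq 0$ to get $(m-1)y^{m-1}=0$, and then translate $p\mid m-1$ into $p\mid r+1$ via $r(m-1)\equiv-(r+1)\pmod p$. Your route is more elementary (no $3\times 3$ determinant) and has a small bonus: it only ever uses $p\nmid(r+1)$, so it shows the hypothesis $p\nmid(r^2-1)$ in the statement can be weakened to $p\nmid(r+1)$. One cosmetic remark: your closing sentence re-derives geometric irreducibility from smoothness, which is redundant since Theorem~\ref{thm:irreducible-curves} already supplied it, but this is harmless.
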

\begin{proof}
We have shown in Theorem~\ref{thm:irreducible-curves} that $C$ is nontrivially blocking. It suffices to show that $C$ is smooth.

Suppose $P=[x:y:z]$ is a singular point of $C$. The conditions $F_x(P)=0$, $F_y(P)=0$ and $F_z(P)=0$ become, after multiplying both sides by $r$,
\begin{align*}
    & (ky+z)x^{m-1}  = 0, \\
    & -rkx^m -z y^{m-1} + rkz^{m} = 0, \\
    & -rx^{m} + ry^{m}  -kyz^{m-1} = 0. 
\end{align*}
We can rewrite the system of equations in the matrix form $M\mathbf{v}=\mathbf{0}$ where
$$
M=\begin{pmatrix} 
ky+z & 0 & 0  \\ 
-rkx & -z & rkz \\
-rx & ry & -ky \end{pmatrix} \ \ \text{and} \ \ \mathbf{v} = 
\begin{pmatrix} 
x^{m-1} \\ y^{m-1} \\ z^{m-1}
\end{pmatrix}.
$$
Since $\mathbf{v}\neq\mathbf{0}$ by assumption, it follows that $\det(M)=0$:
$$
(r^2-1)k(ky+z)yz=0.
$$
Also note that $F_x=0$ implies that $(ky+z)x^{m-1}=0$.

\textbf{Case 1:} $x \neq 0$. Then we must have $ky+z=0$. Since $F=0$, we have $$0=zy^m+kyz^m=z(y^m-z^m).$$
If $z=0$, then $y=0$ and thus $F_y=-rkx^{m} \neq 0$ since $p \nmid r$, a contradiction. Thus, $z \neq 0$ and $y \neq 0$, and we must have 
$$
y^m=z^m=(-ky)^m=(-k)^m y^m,
$$
which implies that $-k$ is an $r$-th power, violating our assumption.

\textbf{Case 2:} $x=0$. Since $F_y=F_z=0$, we have
$$
rkz^m=zy^{m-1}, ry^m=kyz^{m-1}.
$$
Thus, $y \neq 0, z \neq 0$. It follows that $ky+z=0$ since $\det(M)=0$. We can now argue in the same way as we did in \textbf{Case 1} to deduce that $-k$ is an $r$-th power, which is a contradiction. 

We conclude that $C$ is a smooth curve. 
\end{proof}

We discuss the sharpness of the assumption $q>r^4$ in the statement of Theorem~\ref{thm:smoothcurve} for small values of $r$ below.

\begin{rem}\label{rmk:bound-q>r^4}
When $r=2$, the hypothesis requires $q > 16$. However, one can show that $q\geq 7$ is already sufficient by analyzing the cases $q\in\{7, 11, 13\}$ in a computer. On other hand, the conclusion fails when $q=5$, because the degree is $d=\frac{q-1}{r}+1=3$, and an irreducible cubic curve over $\F_5$ cannot be blocking by Theorem~\ref{thm:cubic}.

When $r=3$, the hypothesis requires $q>3^4=81$. It turns out that the conclusion fails when $q=13$. However, we checked with a computer that the conclusion of the theorem holds for all $19\leq q\leq 81$. 

When $r=4$, the hypothesis requires $q>4^4=256$. One can check that the conclusion of Theorem~\ref{thm:smoothcurve} does not hold when $q=29$. On the other hand, the conclusion holds for $q=37$. We believe that the conclusion holds for all $q\geq 37$.

When $r=5$, the hypothesis requires $q>5^4=625$. One can check using a computer that the conclusion of Theorem~\ref{thm:smoothcurve} does not hold when $q=101$. On the other hand, the conclusion of Theorem~\ref{thm:smoothcurve} holds for $q=131$. We believe that the conclusion holds for all $q\geq 131$. 

In general, we believe the bound $q>r^4$ is not optimal. However, to the best knowledge of the authors, relaxing the inequality $q>r^4$ in Corollary~\ref{cor:by+cz} is out of reach.
\end{rem}

We end the section by presenting the proof of Theorem~\ref{thm:infinitely-many-3/4} and Theorem~\ref{thm:infinitely-many-3/4-smooth}. 

\begin{proof}[Proof of Theorem~\ref{thm:infinitely-many-3/4}]
Applying Corollary~\ref{cor:divisor} with $\theta=1/4$ and $A=1/2$, we can find infinitely many primes $p$ such that $p-1$ has a divisor $r$ such that $\frac{1}{4} p^{1/4} \leq r <p^{1/4}$. Note that for each such a pair $(p,r)$, we have $p>r^4$, $p \equiv 1 \pmod r$, and $(p-1)/r \leq 4p^{3/4}$. For each such a pair $(p,r)$,  Theorem~\ref{thm:irreducible-curves} implies that there is a geometrically irreducible nontrivially blocking curve over $\F_p$ with degree $d$ for each choice of $d\geq \frac{p-1}{r}+1$. Since $(p-1)/r \leq 4p^{3/4}$, we obtain desired curves in every degree starting with $4p^{3/4}+1$.
\end{proof}

\begin{proof}[Proof of Theorem~\ref{thm:infinitely-many-3/4-smooth}]
The proof is similar to the proof of Theorem~\ref{thm:infinitely-many-3/4}. It follows from Corollary~\ref{cor:divisor} and Theorem~\ref{thm:smoothcurve}. When $\theta=1/4$, the requirement $A\leq 1/2$ is imposed by the bound $q>r^4$ which appears as a hypothesis in Theorem~\ref{thm:smoothcurve}.
\end{proof}

\section{Smooth blocking curves with degree \texorpdfstring{$(q+3)/2$}{(q+3)/2} and \texorpdfstring{$(q-1)/2$}{(q-1)/2}}\label{sect:constructions-II}

\subsection{Proof of Theorem~\ref{thm:construction-smooth-(q+3)/2}}

When $q \equiv 3 \pmod 4$, we have already proved the existence of such a curve in Theorem~\ref{thm:projective-triangle}. So, we can assume $q \equiv 1 \pmod 4$ and $p>3$ for the rest of the proof.

Let $m=(q-1)/2$. We consider the plane curve $C$ defined by
$$
F(x,y,z)= xy (x^m + y^m) + (xz+yz)(x^m+z^m)=0.
$$
We claim that $C$ is smooth and nontrivially blocking.

When $q=5,13$, one can check $C$ is smooth and nontrivially blocking using a computer. Next, we assume $q \geq 17$ so that Corollary~\ref{cor:by+cz} can be applied with $r=2$. Note that we can write $$F(x,y,z)=(xy+xz+yz)x^m+xy \cdot y^m+(xz+yz)z^m,$$
so $C$ contains the points in 
$$
\{[0:0:1], [0:1:0], [1:0:0]\} \cup \{[1:y:z] | \text{ $y$ and $z$ are non-squares in $\F_q^*$} \}.
$$
We can argue as in Proposition~\ref{prop: general-construction} and Corollary~\ref{cor:d<q+1-nontrivial} that $C$ is nontrivially blocking. Thus, it suffices to show that $C$ is smooth.

Suppose $P=[x:y:z]$ is a singular point of $C$. The conditions $F_x(P)=0$, $F_y(P)=0$ and $F_z(P)=0$ become, after multiplying both sides by $2$,
\begin{align*}
    & x^{m} y + 2y^{m+1} + x^m z + 2 z^{m+1} - x^{m-1} yz = 0, \\
    & 2x^{m+1} + x y^{m} + 2 x^m z + 2z^{m+1} = 0, \\
    & 2x^{m+1} + x z^{m} + 2 x^m y + y z^m = 0. 
\end{align*}
We can rewrite the system of equations in the matrix form $M\mathbf{v}=\mathbf{0}$ where
$$
M=\begin{pmatrix} xy+xz-yz & 2y^2 & 2z^2  \\ 
2x^2+2xz & xy & 2z^2 \\
2x^2+2xy & 0 & xz+yz \end{pmatrix} \ \ \text{and} \ \ \mathbf{v} = 
\begin{pmatrix} 
x^{m-1} \\ y^{m-1} \\ z^{m-1}
\end{pmatrix}.
$$
Since $\mathbf{v}\neq\mathbf{0}$ by assumption, it follows that $\det(M)=0$. After expanding the determinant and factoring, we obtain:
$$
\det(M) = (-3)\cdot xyz \cdot (x+y)\cdot (xy+xz-yz)=0.
$$
Since $\operatorname{char}(\F_q)>3$ by hypothesis, we may ignore the $(-3)$ factor. We will proceed according to which factor above vanishes. 

\textbf{Case 1.} $xyz=0$.

We have three subcases to consider. 

\textbf{Case 1.1.} $z=0$. \\
In this case, $xy(x^m+y^m)=0$ using the defining equation of the curve. If $x=0$, then $y\neq 0$ but then $F_x \neq 0$, a contradiction. If $y=0$, then $x\neq 0$ but then $F_{z}\neq 0$, a contradiction. Thus, $xy\neq 0$ which means $x^m+y^m=0$. Now, $F_x=0$ gives $x^m y + 2y^{m+1}=0$. We obtain $0=y(-y^{m})+2y^{m+1}=y^{m+1}$, which forces $y=0$, a contradiction.

\textbf{Case 1.2.} $y=0$. \\
In this case, $xz(x^m+z^m)=0$ using the defining equation of the curve. If $z=0$, then we have handled this in Case 1.1. So, we may assume that $z\neq 0$. If $x=0$, then $F_y\neq 0$, a contradiction. We must have $xz\neq 0$, which means $x^m+z^m=0$. From $F_x=0$, we get $x^m z + 2 z^{m+1}=0$. Thus, $0=x^m z + 2z^{m+1} = z(-z^{m})+2z^{m+1}=z^{m+1}$. But then $z=0$, which has already been handled in Case 1.1.

\textbf{Case 1.3.} $x=0$. 

If $x=0$, then $F_z=0$ implies that $yz^m=0$, and so $z=0$ or $y=0$. We have handled these in Case 1.1 and Case 1.2, respectively.

\textbf{Case 2.} $xyz\neq 0$ and $x+y=0$.

In this case, $y=-x$ and the defining equation for the curve becomes $-x^2(x^{m}+(-x)^{m})=0$. Since $m=(q-1)/2$ is even by hypothesis, we get $-x^2\cdot (2x^{m})=0$, and so $x=0$, a contradiction. 

\textbf{Case 3.} $xyz\neq 0$, $x+y\neq 0$, and $xy+xz-yz=0$.

Looking at the first row of the matrix equation $M\mathbf{v}=0$, we obtain $2y^{m+1}+2z^{m+1}=0$, that is, $y^{m+1}+z^{m+1}=0$. Let us write $F=0$ more explicitly:
$$
x^{m+1} y + x y^{m+1} + x^{m+1} z + x z^{m+1} + y z x^{m} + y z^{m+1} = 0 
$$
and rearrange to get:
$$
\underbrace{x^{m+1}(y+z) + yz x^{m}}_{=x^{m}(xy+xz+yz)=x^{m}(2yz)} + x \underbrace{(y^{m+1} + z^{m+1})}_{=0} + y z^{m+1} = 0.  
$$
We deduce that:
$$
2x^m yz + y z^{m+1}=0 \ \ \Rightarrow \ \ 2x^m + z^m = 0. 
$$
We will now combine the three relations $xy+xz=yz$,  $y^{m+1}+z^{m+1}=0$, and $2x^m+z^m=0$. Since we are working in projective coordinates and $xyz\neq 0$, without loss of generality, we can set $z=1$. These three relations become:
\begin{align*}
    xy &= y-x, \\
    y^{m+1} &= -1, \\
    2x^m &= -1.
\end{align*}

Since $y^{m+1} = -1$, we square both sides to get $y^{2m+2} = 1$. But $2m+1 = q$ by hypothesis, and so $y^{q+1} = 1$. In particular, $y^{q^2-1} = 1$ which means $y$ belongs to $\F_{q^2}$. Since $x$ and $y$ are related by $xy = y-x$, this means $x$ is in $\F_{q^2}$ as well, and so $x^{q^2-1}=1$ too.

On the other hand, since $2x^{m}=-1$, that is, $2 x^{(q-1)/2} = -1$, this would imply after squaring both sides: $4 x^{q-1} = 1$, and so $4^{q+1} x^{q^2-1} = 1$. Since $x^{q^2-1}=1$, this last equation means $4^{q+1} = 1$. From here, we can use $4^q = 4$ to get $4^2 = 1$ in $\F_q$, that is, $15 = 0$ holds in $\F_q$. Since the characteristic $p$ is bigger than $3$, we conclude $p=5$. 

In this case, $4x^{q-1}=1$ implies $x^{q-1}=-1$. Now, $xy = y-x$ implies $x^q y^q = y^q - x^q$. Using $x^{q-1}=-1$ and $y^{q+1}=1$, this last equation can be written as $- x/y = (1/y) + x$ or equivalently, $-x = 1 + xy$. But we know $xy=y-x$, and so $-x = 1 + y - x$ which forces $y=-1$. Substituting this back again into $xy=y-x$, we get $-x = -1 - x$, which is a contradiction. We deduce that $C$ is smooth.

\begin{rem}
When $q \equiv 3 \pmod 4$, and $p>3$, one can use a similar argument to show that the curve $C$ defined by
$$
F(x,y,z)=-(xy+xz+yz)x^m+xy \cdot y^m+(xz+yz)z^m=0
$$
is smooth and nontrivially blocking, where $m=(q-1)/2$.
\end{rem}

\subsection{Proof of Theorem~\ref{thm:construction-smooth-(q-1)/2}}

Let $q\equiv 3 \pmod{4}$ such that $p>3$ and $q\equiv 3 \pmod{4}$. Let $m=(q-1)/2$. Consider the plane curve $C$ defined by
$$
(x+y+z)^m + x^m + y^m + z^m = 0.
$$
We show that $C$ is smooth and nontrivially blocking in a series of two claims. Note that the proof of Claim~\ref{claim:smoothness} works for all prime powers $q=p^r$ with $p>3$. 

\begin{claim}\label{claim:smoothness}
$C$ is smooth.
\end{claim}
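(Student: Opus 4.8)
The plan is to show that the curve $C\colon G(x,y,z)=0$ with $G=(x+y+z)^m+x^m+y^m+z^m$ has no singular points over $\overline{\F_q}$, where $m=(q-1)/2$. First I would compute the partial derivatives. Writing $s=x+y+z$, we have $G_x=m\big(s^{m-1}+x^{m-1}\big)$, and similarly $G_y=m(s^{m-1}+y^{m-1})$ and $G_z=m(s^{m-1}+z^{m-1})$. Since $p>3$ and $p\nmid m$ (as $m\mid q-1$), a singular point $P=[x:y:z]$ must satisfy
\begin{align*}
s^{m-1}+x^{m-1}&=0,\\
s^{m-1}+y^{m-1}&=0,\\
s^{m-1}+z^{m-1}&=0,
\end{align*}
together with $G(P)=0$. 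Subtracting these in pairs gives $x^{m-1}=y^{m-1}=z^{m-1}=-s^{m-1}$.

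The next step is a case analysis according to which coordinates vanish. If one of $x,y,z$ is zero, say $z=0$, then $z^{m-1}=0$ forces $s^{m-1}=0$, hence $x^{m-1}=y^{m-1}=0$, so $x=y=0$ as well (here using $m-1\ge 1$, i.e.\ $q\ge 5$), contradicting $P\in\bP^2$. A similar contradiction arises if $s=0$: then $x^{m-1}=y^{m-1}=z^{m-1}=0$, forcing $x=y=z=0$. So we may assume $xyzs\ne 0$. Now from $x^{m-1}=y^{m-1}=z^{m-1}=-s^{m-1}$ I would substitute back into $G(P)=0$: since $G=s\cdot s^{m-1}+x\cdot x^{m-1}+y\cdot y^{m-1}+z\cdot z^{m-1}$, and $x^{m-1}=y^{m-1}=z^{m-1}=-s^{m-1}$, we get
$$
G(P)=s^{m}-s^{m-1}(x+y+z)=s^{m}-s^{m-1}\cdot s=0,
$$
so $G(P)=0$ is automatically satisfied and gives no extra information. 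Thus the real constraint is the system $x^{m-1}=y^{m-1}=z^{m-1}=-s^{m-1}$ with all entries nonzero, and I must derive a contradiction from it alone; this is where the hypothesis $q\equiv 3\pmod 4$ should enter.

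From $x^{m-1}=y^{m-1}$ with $x,y\ne0$ we get $(x/y)^{m-1}=1$, so $x/y$ is a root of unity of order dividing $m-1=\frac{q-3}{2}$; similarly for the other ratios. The more useful relations come from $x^{m-1}=-s^{m-1}$: raising to a suitable power, or using that $m-1$ and $m$ are consecutive so $\gcd(m-1,q-1)$ and $\gcd(m,q-1)=m$ interact, one extracts information about $x^{m}$, $s^{m}$. Concretely, note $x^{m}=x\cdot x^{m-1}=-x\,s^{m-1}$, and summing, $x^m+y^m+z^m=-(x+y+z)s^{m-1}=-s^m$, which is again just $G(P)=0$; so instead I would multiply the relation $x^{m-1}=-s^{m-1}$ through by $x$ and by $s$ separately and combine, or pass to the quantities $x^m,\,s^m\in\{1,-1\}$ (since $m=\frac{q-1}{2}$, any nonzero $m$-th power is $\pm1$). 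Writing $\epsilon=s^m\in\{\pm1\}$, the relation $x^{m-1}=-s^{m-1}$ multiplied by $xs$ gives $x^m s=-x\,s^m$, i.e.\ $s\,x^m=-\epsilon x$; summing over $x,y,z$ yields $s(x^m+y^m+z^m)=-\epsilon(x+y+z)=-\epsilon s$, so $x^m+y^m+z^m=-\epsilon=-s^m$, consistent but still not a contradiction — so the decisive input must be that $x^{m-1}=-s^{m-1}$ forces a compatibility between $x^m$ and $s^m$ that fails when $-1$ is a non-square. I expect the main obstacle to be exactly this final algebraic manipulation: squeezing a contradiction purely from $x^{m-1}=y^{m-1}=z^{m-1}=-s^{m-1}$ with $xyzs\ne0$, and I anticipate it works by showing these force $x=y=z$ (hence $s=3x$ and $(3)^{m-1}x^{m-1}=-x^{m-1}$, i.e.\ $3^{m-1}=-1$, contradicting that $3^{m-1}=(3^{(q-1)/2})/3=\pm1/3$ is not $-1$ in characteristic $>3$ after clearing denominators), or else by a parity argument on the orders of the roots of unity $x/y$, $x/s$ using $q\equiv 3\pmod 4$.
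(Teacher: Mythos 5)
Your setup is correct and matches the paper's: the partials give $x^{m-1}=y^{m-1}=z^{m-1}=-s^{m-1}$ with $s=x+y+z$, the degenerate cases $xyz=0$ (and $s=0$) are easily eliminated, and the curve equation itself carries no extra information by Euler's identity. But the proof stops exactly where the real work begins: you never derive a contradiction from the system $x^{m-1}=y^{m-1}=z^{m-1}=-s^{m-1}$ with $xyzs\neq 0$, and both of the endgames you sketch would fail. The relation $x^{m-1}=y^{m-1}$ over $\overline{\F_q}$ only forces $x/y$ to be an $(m-1)$-th root of unity, not $x=y$, so you cannot reduce to $x=y=z$. And the hypothesis $q\equiv 3\pmod 4$ is a red herring for this claim: the paper explicitly notes that the smoothness argument works for every prime power $q=p^r$ with $p>3$, so no parity argument on $-1$ being a non-square can be the decisive input (it is only needed for the blocking property, proved separately).

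The missing idea is to exploit the Frobenius. Normalize $x=\alpha z$, $y=\beta z$; then $\alpha^{m-1}=\beta^{m-1}=1$ and $(1+\alpha+\beta)^{m-1}=-1$. Since $m-1=\frac{q-3}{2}$, squaring gives $\alpha^{q-3}=\beta^{q-3}=(1+\alpha+\beta)^{q-3}=1$, i.e.\ $\alpha^q=\alpha^3$, $\beta^q=\beta^3$, and $(1+\alpha+\beta)^q=(1+\alpha+\beta)^3$. Applying the $q$-power map coordinatewise to $1+\alpha+\beta$ and comparing with the expansion of $(1+\alpha+\beta)^3$ yields
$$
3(\alpha+\beta)(\alpha+1)(\beta+1)=0,
$$
and each factor vanishing forces some nonzero $u\in\{x,y,z\}$ (or rather the corresponding coordinate sum) to satisfy $u^{m-1}=-u^{m-1}$, a contradiction since $p>3$. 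Without this step (or an equivalent one), the proof is incomplete.
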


\begin{proof} Assume, to the contrary, that $P=[x:y:z]$ is a singular point of $C$. Let $F=(x+y+z)^{m}+x^m+y^m+z^m$ be the defining polynomial of $C$. The conditions $F_x(P)=F_y(P)=F_z(P)=0$ translate to:
\begin{align*}
m(x+y+z)^{m-1}+m x^{m-1} = 0, \\
m(x+y+z)^{m-1}+m y^{m-1} = 0, \\
m(x+y+z)^{m-1}+m z^{m-1} = 0. 
\end{align*}
Thus, $x^{m-1}=y^{m-1}=z^{m-1}=-(x+y+z)^{m-1}$. If $xyz=0$, then it is clear that $x=y=z=0$ which would be a contradiction. So, we may assume that $xyz\neq 0$. The relations tell us that there are $\alpha, \beta \in \overline{\F_q}$ such that $x=\alpha z$ and $y=\beta z$, where $\alpha^{m-1}=\beta^{m-1}=1$. Since $m-1=\frac{q-3}{2}$, we get $\alpha^{q-3}=1$. And so $\alpha^q = \alpha^3$. Similarly, $\beta^q=\beta^3$. Using $z^{m-1}=-(x+y+z)^{m-1}$, we obtain 
$$
(1+\alpha+\beta)^{m-1} = -1 \ \ \Rightarrow \ \ (1+\alpha+\beta)^{q-3} = 1. 
$$
Therefore,
$$
1+\alpha^q + \beta^q = (1 + \alpha + \beta)^q = (1+\alpha+\beta)^{3}.
$$
Consequently, 
\begin{equation}
\label{eq:1010}
1 +\alpha^q + \beta^q = 1 + \alpha^3 + \beta^3 + 3(\alpha\beta + 1)(\alpha +
\beta) + 3(\alpha + \beta)^2.
\end{equation}
Using $\alpha^q=\alpha^3$ and $\beta^q=\beta^3$, the equation~\eqref{eq:1010} simplifies to:
$$
3(\alpha+\beta)(\alpha+1)(\beta+1)=0.
$$
If $\alpha=-1$, then $x=-z$, but then $y^{m-1}=-(x+y+z)^{m-1}$ implies that $y^{m-1}=-y^{m-1}$, contradicting $xyz\neq 0$. If $\beta=-1$, then a similar calculation reaches $x^{m-1}=-x^{m-1}$, a contradiction. If $\alpha+\beta=0$, then $\alpha=-\beta$, which means $y=-x$, and the same reasoning as above shows $z^{m-1}=-z^{m-1}$, which again gives a contradiction. Therefore, $C$ is a smooth curve.  
\end{proof}

\begin{claim}\label{claim:non-trivially-blocking}
$C$ is nontrivially blocking.
\end{claim}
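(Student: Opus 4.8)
The plan is to show that the curve $C$ defined by $(x+y+z)^m + x^m + y^m + z^m = 0$ (with $m = (q-1)/2$ and $q \equiv 3 \pmod 4$) meets every $\F_q$-line, and then invoke Corollary~\ref{cor:d<q+1-nontrivial} since $\deg C = m = \frac{q-1}{2} < q+1$. The key arithmetic fact we will exploit is that for $a \in \F_q^*$, the power $a^m = a^{(q-1)/2} = \chi(a)$ equals the quadratic character of $a$, i.e. $+1$ if $a$ is a nonzero square and $-1$ if $a$ is a nonsquare; moreover $0^m = 0$. So the defining equation, restricted to points all of whose coordinates (and whose coordinate sum) are nonzero, asks for $\chi(x+y+z) + \chi(x) + \chi(y) + \chi(z) = 0$, which is impossible for four values in $\{\pm 1\}$ — wait, that's a sum of four $\pm 1$'s, which can be $0$. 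Good: we want it to be $0$, so we want exactly two of $\chi(x+y+z), \chi(x), \chi(y), \chi(z)$ to be $+1$ and two to be $-1$.

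First I would dispose of lines through one of the three coordinate-vertex-type points that lie on $C$: one checks directly that $[1:-1:0]$, $[1:0:-1]$, $[0:1:-1]$ all lie on $C$ (here $x+y+z = 0$, so $(x+y+z)^m = 0$, and $x^m + y^m + z^m = 1 + (-1)^m + 0 = 1 - 1 = 0$ using that $m$ is odd because $q \equiv 3 \pmod 4$). Then for a general line $L: ax+by+cz = 0$, I would split into cases according to how many of $a,b,c$ vanish. If two of them vanish, $L$ is a coordinate line $\{x=0\}$, $\{y=0\}$, or $\{z=0\}$, and one checks it contains one of the three points above (e.g. $\{x=0\}$ contains $[0:1:-1]$). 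If exactly one of $a,b,c$ vanishes, say $c = 0$, then $L$ contains $[0:1:-1]$... no — $L: ax + by = 0$ need not. In that case one parametrizes $L$ and reduces to finding a single value of a parameter with prescribed quadratic character at two (or three) linear forms, which is handled by Lemma~\ref{lem:countingsolns} / Corollary~\ref{cor:47} for $q$ large enough, and by a finite computer check for the small remaining $q$. The main case is $abc \neq 0$: normalize and parametrize $L$ so that a point of $L$ has the form depending on one parameter $t$, with $x, y, z$ and $x+y+z$ each a nonzero linear form in $t$ with distinct roots; then I need a $t \in \F_q$ at which these (at most four) linear forms have a prescribed pattern of quadratic characters making the signed sum vanish, e.g. $\chi$-value $+1$ at two of them and $-1$ at the other two.

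The hard part will be the main case $abc \neq 0$: one must arrange the four linear forms $x(t), y(t), z(t), (x+y+z)(t)$ to have distinct roots (handling degenerate coincidences separately, which may collapse the problem to three or two forms), and then apply the Weil-bound counting result to guarantee a $t$ realizing the desired character pattern. Since $L$ is a line in $\bP^2$ and $x+y+z$ restricted to $L$ is again linear, generically the four forms are pairwise non-proportional, so Lemma~\ref{lem:countingsolns} with $k=4$, $r=2$ applies and gives a valid $t$ once $q$ exceeds an explicit constant (comparable to the $q \geq 47$ threshold of Corollary~\ref{cor:47}, but slightly larger for $k=4$); the degenerate subcases — where two of the forms coincide up to scalar, or one form is identically the parameter — reduce $k$ and are easier, while the finitely many small primes $q$ (those $\equiv 3 \pmod 4$ below the threshold, down to $q = 11$) are dispatched by a direct computer verification, which is why the hypothesis reads $q \geq 11$. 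Finally, having shown $C(\F_q)$ is a blocking set and that $\deg C = \frac{q-1}{2} < q+1$, Corollary~\ref{cor:d<q+1-nontrivial} upgrades it to \emph{nontrivially} blocking, completing the proof of the claim and hence of Theorem~\ref{thm:construction-smooth-(q-1)/2}.
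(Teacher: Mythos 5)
Your overall strategy coincides with the paper's: read $t^{(q-1)/2}$ as the quadratic character $\chi(t)$, reduce the blocking condition on a line $L$ to prescribing character values at a few linear forms in one parameter, apply the Weil-bound counting lemma for large $q$ and a finite computer check for small $q$, and finally upgrade ``blocking'' to ``nontrivially blocking'' via Corollary~\ref{cor:d<q+1-nontrivial} since $d=\frac{q-1}{2}<q+1$. However, your main case is set up wastefully: you should not keep four non-constant forms. Working in the affine chart $y=1$ on $L$ (legitimate whenever $L\neq\{y=0\}$), the coordinate $y$ is constant, so only $x$, $z=-(ax+b)/c$ and $x+y+z$ vary with the parameter; after dividing out leading coefficients this is precisely the $k=3$ situation of Corollary~\ref{cor:47}, which is where the threshold $q\geq 47$ comes from. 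With $k=4$ and $r=2$, Lemma~\ref{lem:countingsolns} gives $N\geq \frac{q}{16}-\frac{17}{16}\sqrt{q}-2$, which is positive only for $q>350$; so your threshold is not ``slightly larger'' but roughly an order of magnitude larger, and your computer verification would have to cover all $q\equiv 3\pmod 4$ up to about $350$ instead of up to $43$. This is a quantitative loss, not a logical error.

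The genuine gap is in your dismissal of the degenerate subcases as ``easier.'' For the line $x+y+z=0$ (the case $a=b=c\neq 0$, which sits inside your main case $abc\neq 0$), the fourth form vanishes identically, and at any point of $L$ with $xyz\neq 0$ the condition becomes $\chi(x)+\chi(y)+\chi(z)=0$ --- a sum of three values in $\{\pm 1\}$, which is never zero. No counting lemma with any $k$ can block this line; it is blocked only at the special points $[1:-1:0]$, $[1:0:-1]$, $[0:1:-1]$ where a coordinate vanishes. You verified these points lie on $C$ at the outset but never invoke them in the main case, so as written your argument fails on exactly this line (and, similarly, the character-counting step needs care on the lines where $x+y+z$ is proportional to a coordinate, e.g.\ $a=b$, which the paper also settles via $[1:-1:0]$). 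One further small point: the hypothesis $q\geq 11$ is not merely where the computation starts --- for $q=7$ the curve is an irreducible cubic and the conclusion genuinely fails by Theorem~\ref{thm:cubic}.
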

 
\begin{proof}
By Corollary~\ref{cor:d<q+1-nontrivial}, it suffices to show that $C$ is blocking. When $q=7$, the degree is $d=\frac{q-1}{2}=3$, and the conclusion fails because an irreducible cubic curve over $\F_7$ cannot be blocking by Theorem~\ref{thm:cubic}. When $11\leq q<47$, it is easy to verify the statement using a computer. Next, we assume that $q \geq 47$ so that we can apply Corollary~\ref{cor:47}.

Let $L$ be an $\F_q$-line $ax+by+cz=0$ in $\mathbb{P}^2$. We consider several cases depending on the values of $a, b, c$. The convention for numbering different cases follows a tree structure.

\textbf{Case 0.} $abc=0$. Since the equation of the curve is symmetric in $x,y,z$, we may assume $c=0$. We have two subcases to consider.

\textbf{Case 00.} $b=0$. In this case, $L$ is given by $\{x=0\}$, and $C\cap L$ contains the point $[0:1:-1]$. We have used $(-1)^{m}=-1$ which holds due to $q\equiv 3\pmod{4}$.

\textbf{Case 01.} $a=0$. In this case, $L$ is given by $\{y=0\}$, and $C\cap L$ contains the point $[1:0:-1]$.

\textbf{Case 02.} $ab\neq 0$. We can assume $b=1$ for the rest of this case. 

\textbf{Case 020.} $a=1$. In this case, $L$ is given by $x+y=0$, and $L\cap C$ contains $[1:-1:0]$.

\textbf{Case 021.} $a\neq 1$. The equation for $L$ is $y=-ax$. Substituting this into the equation of $C$, we get:
$$
x^m - a^m x^m + z^m + ((1-a)x + z)^m = 0.
$$
We will look for a solution with $z=1$, in which case the equation above becomes:
\begin{align}\label{eq:Case021}
x^m - a^m x^m + 1 + ((1-a)x + 1)^m = 0.
\end{align}

\textbf{Case 0210.} $a^m = 1$. 
In this case, \eqref{eq:Case021} becomes $((1-a)x + 1)^m = -1$. We can always find  $x_0\in\F_q^*$ such that $(1-a)x_0+1$ is a non-square since $a \neq 1$. Such a point satisfies $((1-a)x_0 + 1)^m = -1$, and therefore $[x_0:-ax_0:1]\in L\cap C$.

\textbf{Case 0211.} $a^m=-1$. In this case, \eqref{eq:Case021} becomes $2x^m + 1 + ((1-a)x + 1)^m = 0$. By Corollary~\ref{cor:47}, we can find a non-square $x_0 \in \F_q$ such that $(1-a)x_0+1$ is a nonzero square. Such a point satisfies $x_0^m=-1$ and $((1-a)x_0 + 1)^m = 1$. Therefore, $[x_0:-ax_0:1]\in L\cap C$.

This concludes the analysis of the case $abc=0$. We move on to the next main case.

\textbf{Case 1.} $abc\neq 0$. 

We work under the assumption now that $c=1$. Again, there are several cases to consider.

\textbf{Case 10.} $a=b=1$. In this case, $C\cap L$ contains the point $[1:-1:0]$.

Again, using the symmetry of the curve, we will proceed according to the following case.

\textbf{Case 11}. $b\neq 1$. There are two further sub-cases to consider:

\textbf{Case 110.} $a=1$. The equation of $L$ is $z=-(x+by)$. The intersection between $L$ and $C$ is thus governed by the following relation:
\begin{equation}\label{eq:Case110}
    x^m + y^m - (x + by)^m + ((1-b)y)^m = 0.
\end{equation}

\textbf{Case 1100.} $1-b$ is a (nonzero) square. In this case, \eqref{eq:Case110} leads to:
$$
x^m + 2y^m - (x+by)^m = 0.
$$
By setting $y=1$, it suffices to find $x_0\in \F_q$ such that $x_0$ is a non-square and $x_0+b$ is a nonzero square.  Such $x_0$ exists by Corollary~\ref{cor:47}.

\textbf{Case 1101.} $1-b$ is a non-square. In this case, \eqref{eq:Case110} leads to:
$$
x^m = (x+by)^m.
$$
By setting $y=1$, it suffices to find an $x_0\in\F_q$ such that $x_0$ and $x_0+b$ are both nonzero squares. Such $x_0$ exists by Corollary~\ref{cor:47}.

\textbf{Case 111.} $a\neq 1$ (note that we already know that $b\neq 1$). The equation of the line $L$ is $z=-ax-by$. Substituting this into the equation of $C$, the points in $L\cap C$ are determined by:
$$
x^m + y^m - (ax + by)^m + ((1-a)x + (1-b)y)^m = 0. 
$$
We look for solutions to the above equation when $x, y\in\F_q$. Setting $y=1$, it suffices to analyze:
\begin{align}\label{eq:Case111}
x^m - (ax + b)^m + ((1-a)x + 1-b)^m = -1.
\end{align}
We proceed according to two subcases, depending on whether $a$ is equal to $b$.

\textbf{Case 1110.} $a=b$. In this case, the equation \eqref{eq:Case111} is satisfied when $x=-1$ because $q\equiv 3\pmod{4}$. Thus, $L\cap C$ certainly contains $\F_q$-points in this case.

\textbf{Case 1111.} $a\neq b$. In this case, in order to satisfy \eqref{eq:Case111}, it suffices to find $x_0\in \F_q$ such that $x_0$ is a non-square, but $ax_0+b$ and $(1-a)x_0 + 1-b$ are both nonzero squares. After dividing both sides by $a$ and $1-a$, we need to ensure that $x_0$ is a non-square, $x_0+\frac{b}{a}$ and $x_0 + \frac{1-b}{1-a}$ have prescribed form (squares or non-squares depending on $a$ and $1-a$). Such an $x_0$ exists by Corollary~\ref{cor:47} since $a \neq b$ implies that $\frac{b}{a} \neq \frac{1-b}{1-a}$.

This concludes the proof that $C(\F_q)$ is a blocking set.
\end{proof}

Note that the condition $q\equiv 3\pmod{4}$ was used in the proof of the previous theorem. Indeed, the given curve will not be blocking when $q\equiv 1\pmod{4}$ and $p>3$, because the line $x+y+z=0$ is not blocked. We finish the paper by illustrating different examples for the case $q\equiv 1 \pmod{4}$.

\begin{ex}\label{ex:degree-m}
Consider the following plane curves of degree $d=\frac{q-1}{2}$,
\begin{itemize}
    \item $x^d+y^d+z^d+(x+y+z)^d+(x-3y+9z)^d+(x+2y+4z)^d=0$ over $\F_{q}$ for $q\in \{13, 17, 29, 37, 53\}$,
     \item $x^d+y^d+z^d+(x+y+z)^d+(x-5y+25z)^d+(x+2y+4z)^d=0$ over $\F_{q}$ for $q\in \{41, 61\}$.
\end{itemize}
Each of these plane curves is smooth and blocking over $\F_q$. \end{ex}

We do not know how to generalize Example~\ref{ex:degree-m} for every $q\equiv 1\pmod{4}$. Interestingly, the curve defined by $$x^{d}+y^{d}+z^{d}+(x+y+z)^{d}+(x-3y+9z)^{d}+(x+2y+4z)^{d}=0$$ is not a blocking curve over $\F_{q}$ for $q\in \{41, 61\}$. Moreover, it is not smooth for $q=61$. It is reasonable to conjecture that for $q\equiv 1\pmod{4}$, there exists a smooth blocking curve of the form,
$$x^{\frac{q-1}{2}}+y^{\frac{q-1}{2}}+z^{\frac{q-1}{2}}+(x+y+z)^{\frac{q-1}{2}}+(x-ay+a^2 z)^{\frac{q-1}{2}}+(x+by+b^2z)^{\frac{q-1}{2}}=0$$
for suitable choices of $a, b\in \F_q$.

\section*{Acknowledgements}
The authors thank Greg Martin and J\'ozsef Solymosi for helpful discussions. During the preparation of this manuscript, the first author was supported by a postdoctoral research fellowship from the University of British Columbia and the NSERC PDF award.
The second author is supported by an NSERC Discovery grant. 

\section*{Data Availability Statement} 
Data sharing is not applicable to this article as no datasets were generated or analysed during the current study.

\bibliographystyle{alpha}
\bibliography{biblio.bib}

\end{document}